\newtheorem{thm}{Theorem}[section]
\newaliascnt{prop}{thm}
\newaliascnt{lem}{thm}
\newaliascnt{cor}{thm}
\newaliascnt{defn}{thm}
\newaliascnt{rem}{thm}
\newaliascnt{exa}{thm}
\newtheorem{lem}[lem]{Lemma} 
\newtheorem{cor}[cor]{Corollary}
\theoremstyle{definition}
\newtheorem{defn}[defn]{Definition}
\theoremstyle{remark}
\newtheorem{rem}[rem]{Remark}
\theoremstyle{plain}
\newtheorem{prob}{Approximation-Problem}
\newenvironment{alg}{\begin{small}\tt}{\end{small}}
\numberwithin{equation}{section} 
\newcommand{\bpm}{\begin{pmatrix}}
\newcommand{\epm}{\end{pmatrix}}
\newcommand{\bsm}{\left(\begin{smallmatrix}}
\newcommand{\esm}{\end{smallmatrix} \right)}
\newcommand{\mb}{\mathbb}
\newcommand{\mc}{\mathcal}
\newcommand{\wt}{\widetilde}
\newcommand{\matpdf}[1]{\includegraphics[trim=31mm 90mm 39mm 85mm,scale=0.45]{#1}}
\newcommand{\matlab}{$\text{\textsc{Matlab}}^\text{\textregistered}$}
\newcommand{\indx}[1]{\emph{#1}\index{#1}}
\newcommand{\subindx}[2]{\emph{#1}\index{#1}}
\renewcommand{\Re}{\operatorname{Re}}
\renewcommand{\ker}{\operatorname{ker}}
\newcommand{\penc}[1]{\left(#1\right)}
\renewcommand{\det}{\operatorname{det}}
\newcommand{\rank}{\operatorname{rank}}
\newcommand{\trace}{\operatorname{trace}}
\newcommand{\dom}{\mc{D}}
\newcommand{\infb}[2]{\inf\limits_{#2}#1}
\newcommand{\ie}{\text{\normalfont{i}}}
\newcommand{\apenc}[1]{\mb{P}_{#1}}
\newcommand{\rpenc}[1]{\mc{R}\apenc{#1}}
\newcommand{\rat}[2]{\mb{T}_{#1,#2}}
\newcommand{\rez}[1]{\mb{S}(#1)}
\newcommand{\sys}[1]{\mb{S}_{#1}}
\newcommand{\Cont}[1]{\mathfrak{C}_{\scriptscriptstyle #1}}
\newcommand{\Obs}[1]{\mathfrak{O}_{\scriptscriptstyle #1}}
\newcommand{\Fkt}[1]{\mc{G}(#1)}
\newcommand{\rse}[3]{#1\cdot #2\cdot #3}
\newcommand{\rser}[2]{#1\cdot #2}
\newcommand{\opt}[3]{{\mc{#1}}_{\scriptscriptstyle #2,#3}}
\begin{document}



\title{Optimal $\boldsymbol{RH_2}$- and $\boldsymbol{RH_\infty}$-Approximation of Unstable Descriptor Systems}
\author{Marcus K\"ohler}

\maketitle

\begin{abstract}
Stability perserving is an important topic in approximation of systems, e.g.\ model reduction. If the original system is stable, we often want the approximation to be stable. But even if an algorithm preserves stability the resulting system could be unstable in practice because of round-off errors. Our approach is approximating this unstable reduced system by a stable system. More precisely, we consider the following problem. Given an unstable linear time-invariant continuous-time descriptor system with transfer function $G$, find a stable one whose transfer function is the best approximation of $G$ in the spaces $RH_2$ and $RH_\infty$, respectively. 
 Explicit optimal solutions are presented under consideration of numerical issues.
\end{abstract}

\section{Introduction}

We deal with linear time-invariant descriptor systems described by differential algebraic equations (see e.g.\  \cite{Kunkel2006}, \cite{Dai1989} for more details)
 \begin{EQ}[rl]\label{eq:dsys}
  E\dot{x}(t)&=Ax(t)+Bu(t)\\
  y(t)&=Cx(t)+Du(t)
 \end{EQ}
with matrices $(E,A,B,C,D)\in \mb{R}^{n\times n}\times\mb{R}^{n\times n}\times\mb{R}^{n\times m}\times\mb{R}^{p\times n}\times\mb{R}^{p\times m}$ and the corresponding transfer function $s\mapsto C(sE-A)^{-1}B+D$. In many applications such as model reduction (see \cite{Antoulas2001} for an overview an further references) or system identification (see \cite{Ljung1999}) one wishes to approximate systems such that the transfer functions of the original and the approximated system are as close as possible. An important property of an approximation technique is preservation of stability (cf.\ e.g.\  balanced truncation \cite{Antoulas2001}, hankel norm approximation \cite{Glover1984}). However, not every method has this property in general (Arnoldi method, Lanczos method \cite{Antoulas2001}). Moreover, a stable approximation can be unstable in practice \cite{Bai1998a}. This happens in computer implementations because of round-off errors. One way out is restarting the algorithm with changed parameters, e.g.\ interpolation points in Lanczos method. The disadvantage is at least doubled computational complexity. Another approach is to modify the algorithm directly (e.g.\ partial Padé-via-Lanczos method \cite{Bai2001}). 

In this paper our approach for stability perserving is to consider the computed unstable approximation of the original system. Given is an unstable descriptor system $S$ \eqref{eq:dsys} with transfer function $G$. Find a stable descriptor system whose transfer function is the best approximation of $G$ in the spaces $RL_2$ and $RL_\infty$ of real rational functions on the imaginary axis $\ie\mb{R}$ in the Lebesgue spaces $L_2$ and $L_\infty$, respectively. We call this system an optimal $RH_2$-approximation and optimal $RH_\infty$-approximation of $S$, respectively.



For special cases this problem was already solved. The characterization of the unique optimal $RH_2$-approximation of a standard system (i.e.\ $E=I$ in \eqref{eq:dsys}) with $D=0$ is a simple consequence of the Paley-Wiener theorem. The set of all suboptimal $RH_\infty$-approximations of standard systems was determined in \cite{Green1990} with j\nobreakdash-spectral factorizations to solve model matching problems. However, no optimal ones were explicitly given. 
 An explicit representation of a nonunique optimal $RH_\infty$-approximation of minimal antistable standard systems was presented in the proof of \cite[Theorem 6.1]{Glover1984}. This theorem was only used as an auxiliary result for a more general problem, the optimal hankel norm approximation. That representation requires the possibly ill-conditioned balanced minimal realization of $G$. This numerical issue was discussed in \cite{Safonov1990}. 

Our approach for approximating an unstable descriptor system $S$ in a numerically reliable way is the following. First, we decompose $S$ into a stable and antistable part by combining \cite[Theorem 7.7.2]{Golub1996} and \cite[Section 4.1]{Kaagstroem1992}. We show that it is sufficient to replace the antistable part by its approximation to get an approximation of $S$. This was e.g.\ proposed in \cite[Section 8.3]{Francis1987} for standard systems and w.r.t.\ $RL_\infty$. The stable part is the unique optimal $RH_2$-approximation of $S$. Our representation of the optimal $RH_\infty$-approximation only requires at most one singular value decomposition.  
For discret-time standard systems \cite{Mari2000} presented an approximation method similar to ours. 
However, in contrast to our approach they used a balanced minimal realization of the antistable part.

The paper is organized as follows. In \autoref{sec:pre} we summarize some results regarding the theory of matrices and matrix pairs. We introduce the setting and recall some system transformations, e.g.\  balanced realization. 
\autoref{sec:appr} states and solves both approximation problems. \autoref{sec:num} presents an algorithm solving our problem and some numerical examples.

\section{Preliminaries} \label{sec:pre}

We define $\mb{C}_{>0}:=\{s\in\mb{C};~\Re(s)>0\}$ and analogously $\mb{C}_{\geq0}$, $\mb{C}_{<0}$, $\mb{C}_{\leq0}$. The set $\ie\mb{R}$ is the imaginary axis. For a function $f$ we denote its domain by $\dom(f)$ and for $M\subseteq\dom(f)$ the image of $M$ under $f$ by $f[M]$. The kernel of a matrix $A\in\mb{C}^{n\times m}$ is denoted by $\ker(A)$, the adjoint by $A^*$, the spectrum by $\sigma(A)$ and the Frobenius norm by $\|A\|_F=\sqrt{\trace(A^*A)}$. The matrix $I_n\in\mb{R}^{n\times n}$ is the identity matrix and $0_{n\times m}\in\mb{R}^{n\times m}$ the zero matrix. The vector $e_i\in\mb{R}^n$ is the $i$th unit vector.

\begin{defn}
We call $\penc{E,A}\in\mb{R}^{n\times n}\times\mb{R}^{n\times n}$ a \indx{matrix pair}. A scalar $\lambda \in \mb{C}$ is called a \subindx{(generalized) eigenvalue}{matrix pair} of $\penc{E,A}$, if $\det(\lambda E-A)=0$. If there exists $v\neq0$ such that $Ev=0$ and $Av\neq0$, then $\infty$ is called a (generalized) eigenvalue of $\penc{E,A}$. We denote the set of all eigenvalues of $\penc{E,A}$ by $\sigma(E,A)\subseteq\mb{C}\cup\{\infty\}$ and the resolvent set by $\rho(E,A):=\mb{C}\backslash\sigma(E,A)$.\nomenclature[s]{$\sigma(E,A)$}{} If $\rho(E,A)=\emptyset$, then $\penc{E,A}$ is called \subindx{singular}{matrix pair} otherwise \subindx{regular}{matrix pair}. We denote the set of all regular matrix pairs by $\apenc{n}:=\left\{\penc{E,A}\in\mb{R}^{n\times n}\times\mb{R}^{n\times n};~\penc{E,A}\text{ regular}\right\}$. \nomenclature[s]{$\rpenc{n}$}{}
\end{defn}
\begin{rem}
If the matrix pair is regular, then the set of eigenvalues is finite. Otherwise every $\lambda\in\mb{C}$ is an eigenvalue. Details about the generalized eigenvalue problem can be found in \cite{Stewart1972}.
\end{rem}
To prepare decompositions of systems, we combine the results in \cite[Theorem 7.7.2]{Golub1996} and \cite[Section 4.1]{Kaagstroem1992} to get the following theorem. 
\begin{thm}\label{thm:decsyl}
Let $\penc{E,A}\in\apenc{n}$ and $M_1,M_2\subseteq \mb{C}\cup\{\infty\}$ be two disjoint sets such that $\sigma(E,A)=M_1\cup M_2$. Then there exist orthogonal $U,V\in\mb{R}^{n\times n}$ such that
\begin{EQ}\label{eq:gdc}
\penc{UEV,UAV}=\penc{\bpm E_1&E_2\\0&E_3\epm,\bpm A_1&A_2\\0&A_3\epm},
\end{EQ}
$\sigma(E_1,A_1)=M_1$ and $\sigma(E_3,A_3)=M_2$. Moreover, there exist $R,L\in\mb{R}^{m\times p}$ such that
\begin{EQ}[rl]\label{eq:sylv}
A_1R-LA_3&=-A_2\\
E_1R-LE_3&=-E_2\,.
\end{EQ}
The matrices $P:=\bsm I&-L\\0&I\esm U,~ Q:=V\bsm I&R\\0&I\esm$ satisfy
\begin{EQ}
\penc{PEQ,PAQ}=\penc{\bpm E_1&0\\0&E_3\epm,\bpm A_1&0\\0&A_3\epm}.
\end{EQ}
\end{thm}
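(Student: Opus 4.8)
The plan is to assemble the orthogonal pair $\penc{U,V}$ from the generalized real Schur form followed by an eigenvalue-reordering step, then to produce the block-diagonalizing pair $\penc{P,Q}$ by solving the coupled Sylvester system \eqref{eq:sylv}; the final block-diagonal form is then a one-line verification.

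\textbf{Step 1 (block triangularization).} First I invoke the generalized real Schur decomposition \cite[Theorem 7.7.2]{Golub1996}, which gives orthogonal $U_0,V_0$ such that $U_0EV_0$ is upper triangular and $U_0AV_0$ is quasi-upper-triangular, the diagonal $1\times1$ and $2\times2$ blocks displaying the finite or infinite eigenvalues of $\penc{E,A}$ --- an infinite eigenvalue being signalled by a zero diagonal entry of $U_0EV_0$. Since $\penc{E,A}$ is regular, every diagonal block carries a single eigenvalue, a conjugate pair, or the value $\infty$, and this value lies either in $M_1$ or in $M_2$. I then reorder the diagonal blocks by the standard swapping technique for regular matrix pairs, i.e.\ successive interchanges of adjacent diagonal blocks realized by orthogonal equivalence transformations as in \cite[Section 4.1]{Kaagstroem1992}, so that all blocks with eigenvalue in $M_1$ come first. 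Absorbing the accumulated orthogonal factors into $U_0,V_0$ yields orthogonal $U,V$ of the form \eqref{eq:gdc}; partitioning at the position of the split gives $\sigma(E_1,A_1)=M_1$ and $\sigma(E_3,A_3)=M_2$. From $\det(sE-A)=\pm\det(sE_1-A_1)\det(sE_3-A_3)$ one sees that $\penc{E_1,A_1}$ and $\penc{E_3,A_3}$ are again regular.

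\textbf{Step 2 (coupled Sylvester equation).} Next I solve \eqref{eq:sylv} for $R,L$. The left-hand side of \eqref{eq:sylv} is a linear (generalized Sylvester) operator in $\penc{R,L}$, and by \cite[Section 4.1]{Kaagstroem1992} it is invertible exactly when $\sigma(E_1,A_1)$ and $\sigma(E_3,A_3)$ are disjoint subsets of $\mb{C}\cup\{\infty\}$. Since $M_1\cap M_2=\emptyset$ by hypothesis, a (unique) solution $R,L$ exists.

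\textbf{Step 3 (verification).} Finally, with $P=\bsm I&-L\\0&I\esm U$ and $Q=V\bsm I&R\\0&I\esm$ I multiply out, using \eqref{eq:gdc},
\[
PEQ=\bsm I&-L\\0&I\esm\bpm E_1&E_2\\0&E_3\epm\bsm I&R\\0&I\esm=\bpm E_1&E_1R-LE_3+E_2\\0&E_3\epm,
\]
and the analogous identity for $PAQ$. By the two equations in \eqref{eq:sylv} the off-diagonal blocks $E_1R-LE_3+E_2$ and $A_1R-LA_3+A_2$ both vanish, which yields the asserted block-diagonal form and completes the proof. The only genuinely nontrivial ingredients are quoted from the literature --- the reordering of eigenvalues inside the generalized Schur form (Step 1) and the solvability of the coupled Sylvester system (Step 2), both of which hinge on the disjointness $M_1\cap M_2=\emptyset$ and on correctly tracking the infinite eigenvalue through the $E$-part. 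The remaining work is the bookkeeping of block sizes and the short matrix computation of Step 3, so I expect the main obstacle to be stating Step 1 carefully rather than any hard argument.
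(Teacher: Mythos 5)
Your proposal is correct and follows exactly the route the paper intends: the paper gives no separate proof but explicitly assembles the theorem from the generalized real Schur decomposition with eigenvalue reordering \cite[Theorem 7.7.2]{Golub1996}, the solvability of the coupled Sylvester equation for disjoint spectra \cite[Section 4.1]{Kaagstroem1992}, and the direct block computation you carry out in Step 3. Nothing to add.
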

\begin{rem}
We get \eqref{eq:gdc} e.g.\  by applying the generalized Schur decomposition to $\penc{E,A}$. Equation \eqref{eq:sylv} is called generalized (or coupled) Sylvester equation (see \cite{Kaagstroem1992}).
\end{rem}
We recall the following known result which can be used to test regularity of matrices.
\begin{thm}[{\cite[Section 2.3]{Zhou1996}}]\label{thm:schur}
Let $A\in\mb{C}^{n\times n}$, $B\in\mb{C}^{n\times m}$, $C\in\mb{C}^{m\times n}$, $D\in\mb{C}^{m\times m}$ and $M:=\bsm A&B\\C&D\esm$. If $A$ is regular, then we have $\det(M)=\det(D-CA^{-1}B)\det(A)$. If $D$ is regular, then we have $\det(M)=\det(A-BD^{-1}C)\det(D)$.  The matrices $D-CA^{-1}B$ and $A-BD^{-1}C$ are called the \indx{Schur complement} of $A$ and $D$ in $M$, respectively.
\end{thm}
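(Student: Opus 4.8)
The plan is to prove both determinant identities by factoring $M$ into a unipotent triangular matrix (determinant $1$) times a block-triangular matrix whose determinant is visibly the claimed product, and then to invoke multiplicativity of the determinant.

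First, assume $A$ is regular. I would check by a direct block multiplication the factorization
\[
 \bpm A&B\\C&D\epm=\bpm I_n&0\\CA^{-1}&I_m\epm\bpm A&B\\0&D-CA^{-1}B\epm ,
\]
which is well defined exactly because $A^{-1}$ exists. The first factor, read as an $(n+m)\times(n+m)$ matrix, is lower triangular with all diagonal entries equal to $1$, hence has determinant $1$. The second factor is block upper triangular, and I would use the standard fact that the determinant of a block triangular matrix is the product of the determinants of its diagonal blocks; in the cases needed here this follows by repeated Laplace expansion, or more slickly from $\bsm A&B\\0&S\esm=\bsm A&0\\0&I_m\esm\bsm I_n&A^{-1}B\\0&S\esm$ together with Laplace expansion of each factor along suitable rows and columns. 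Multiplying determinants then gives $\det(M)=\det(A)\det(D-CA^{-1}B)$.

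For the second identity, assume $D$ is regular and use the mirror-image factorization
\[
 \bpm A&B\\C&D\epm=\bpm I_n&BD^{-1}\\0&I_m\epm\bpm A-BD^{-1}C&0\\C&D\epm ,
\]
again verified by direct multiplication. Here the first factor is upper triangular with unit diagonal, so its determinant is $1$, and the second is block lower triangular with determinant $\det(A-BD^{-1}C)\det(D)$; hence $\det(M)=\det(A-BD^{-1}C)\det(D)$.

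There is essentially no obstacle: the statement is classical, and the only ingredient beyond routine block arithmetic is the block-triangular determinant rule, which reduces to the scalar Leibniz formula. Accordingly I would keep the write-up to the two displayed factorizations, the remark on triangular determinants, and one line of multiplicativity, leaving the elementary bookkeeping to the reader.
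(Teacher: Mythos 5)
Your proof is correct: both block factorizations check out by direct multiplication, and combining the unit-triangular factors with the block-triangular determinant rule yields exactly the two claimed identities. The paper itself gives no proof — it cites this as a known result from \cite[Section 2.3]{Zhou1996} — and your argument is precisely the standard one found there, so there is nothing to add.
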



\begin{defn}
For $n,p,m\in\mb{N}$ we define the sets of \indx{systems} (compare \eqref{eq:dsys})
\begin{EQ}[rl] 
\sys{n,p,m}&:=\apenc{n}\times\mb{R}^{n\times m}\times\mb{R}^{p\times n}\times\mb{R}^{p\times m},\\
\sys{n,p,m}^0&:=\left\{(E,A,B,C,D)\in\sys{n,p,m};~\ie\mb{R}\subseteq\rho(E,A)\right\},\\ 
\sys{n,p,m}^+&:=\left\{(E,A,B,C,D)\in\sys{n,p,m};~\mb{C}_{\geq0}\subseteq\rho(E,A)\right\},\\
\sys{n,p,m}^-&:=\left\{(E,A,B,C,D)\in\sys{n,p,m};~\mb{C}_{\leq0}\subseteq\rho(E,A),~E\text{ regular}\right\},
\end{EQ}
where $\sys{n,p,m}^+$ and $\sys{n,p,m}^-$ are the set of the \indx{stable} and \indx{antistable} systems, respectively. We call a system $S=(E,A,B,C,D)\in\sys{n,p,m}$ a \indx{standard system} if $E=I$ and a \indx{descriptor system} otherwise. For $S_i=(E_i,A_i,B_i,C_i,D_i)\in\sys{n_i,p,m}$, $i\in\{1,2\}$, and regular $P,Q\in\mb{R}^{n_1\times n_1}$ we define the operations
\begin{EQ}[rl]
 S_1\oplus S_2&:=\left(\bpm E_1&0\\0&E_2\epm,\bpm A_1&0\\0&A_2\epm,\bpm B_1\\B_2\epm,\bpm C_1&C_2\epm,D_1+D_2\right),\\
 \rse{P}{S_1}{Q}&:=(PE_1Q,PA_1Q,PB_1,C_1Q,D_1)\,,\\
 \rser{P}{S_1}&:=\rse{P}{S_1}{I_{n_1}}\,, \qquad\qquad\qquad\qquad \rser{S_1}{Q}:=\rse{I_{n_1}}{S_1}{Q}\,.
\end{EQ}
\nomenclature[s]{$\sys{n,p,m}$}{}\nomenclature[s]{$\sys{n,p,m}^+$}{}\nomenclature[s]{$\sys{n,p,m}^-$}{}
\end{defn}

Some of the following definitions and theorems for standard systems can be given for descriptor systems, too. However, we restricted ourselves to the specializations we need here.

\begin{defn}[{\cite[Definition 2-6.1]{Dai1989}}]
Let $S=(E,A,B,C,D)\in\sys{n,p,m}$. Then
\[\Fkt{S}\colon\rho(E,A)\ni s\longmapsto C(sE-A)^{-1}B+D\in\mb{R}^{p\times m}\]
 is called the \indx{transfer function} of $S$. We denote the set of all $p\times m$ transfer functions by $\rat{p}{m}:=\{\Fkt{S};~S\in\bigcup_{n\in\mb{N}}\sys{n,p,m}\}$. A system $\wt{S}\in\sys{\wt{n},p,m}$ is called a \indx{realization} of $\Fkt{S}$, if $\Fkt{S}(s)=\Fkt{\wt{S}}(s)$ for all $s\in \dom(\Fkt{S})\cap\dom(\Fkt{\wt{S}})$.  We denote the set of all realizations of $\Fkt{S}$ by $\rez{\Fkt{S}}$. A realization $S_1\in\sys{n_1,p,m}$ of $\Fkt{S}$ is said to be \indx{minimal}, if every other realization $S_2\in\sys{n_2,p,m}$ of $\Fkt{S}$ fulfils $n_2\geq n_1$. 
 \nomenclature[A]{$\rat{p}{m}$}{}
\end{defn}

\begin{rem}\label{rem:transop} Transfer functions are meromorphic. Obviously, $\Fkt{S_1\oplus S_2}=\Fkt{S_1}+\Fkt{S_2}$ and $\Fkt{\rse{P}{S}{Q}}=\Fkt{S}$ hold. 
\end{rem}

We want to transform a system $S=(E,A,B,C,D)$ into $\wt{S}=(\wt{E},\wt{A},\wt{B},\wt{C},\wt{D})$ with $\Fkt{S}=\Fkt{\wt{S}}$, in particular $\sigma(E,A)=\sigma(\wt{E},\wt{A})$, such that $\wt{E},\wt{A},\wt{B},\wt{C},\wt{D}$ have a special structure, e.g.\  to decompose systems into a stable and antistable part. Note that all matrices stay real under the presented transformations.

\begin{defn}[{\cite[Definition 1-3.1]{Dai1989}}]
Two systems $S_i=(E_i,A_i,B_i,C_i,D_i)\in\sys{n,p,m}$, $i\in\{1,2\}$, are called \subindx{restricted system equivalent}{System} (for short r.s.e., $S_1\sim S_2$), if there exist regular $P,Q\in\mb{R}^{n\times n}$ such that $\rse{P}{S_1}{Q}=S_2$.
\end{defn}



\begin{lem}\label{lem:pspip}
For every $S=(E,A,B,C,D)\in\sys{n,p,m}$ there exist $S_w\sim S$, $k\in\{0,\ldots,n\}$ and $\nu\in\mb{N}$ such that 
\[S_w=\left(\bpm I_k&0\cr 0&N\epm,\bpm J&0\cr 0&I_{n-k}\epm,\bpm B_J\\B_N\epm,\bpm C_J&C_N\epm,D\right),\]
and $N^\nu=0$. Moreover, $\dom(\Fkt{S})=\rho(J)=\rho(E,A)$ holds and
\begin{EQ}\label{eq:ratpoly}
\forall\, s\in \dom(\Fkt{S})\colon \Fkt{S}(s)=C_J(sI-J)^{-1}B_J+D-\sum_{i=0}^{\nu-1}s^iC_NN^iB_N\,.
\end{EQ}
\end{lem}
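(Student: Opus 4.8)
The plan is to use the Weierstrass canonical form for regular matrix pairs, which is essentially \autoref{thm:decsyl} applied with $M_1 = \sigma(E,A) \cap \mb{C}$ (the finite eigenvalues) and $M_2 = \{\infty\}$ if $\infty \in \sigma(E,A)$, otherwise $M_2 = \emptyset$. First I would invoke \autoref{thm:decsyl} to obtain a block-diagonalizing pair $P,Q$ so that $\rse{P}{S}{Q}$ splits as $S_f \oplus S_\infty$, where $S_f = (E_1,A_1,B_1,C_1,0)$ has only finite eigenvalues (so $E_1$ is regular) and $S_\infty = (E_3,A_3,B_3,C_3,0)$ has $\sigma(E_3,A_3) = \{\infty\}$ (so $A_3$ is regular, in fact $E_3$ is nilpotent after a further transformation). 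By \autoref{rem:transop} this operation preserves the transfer function, so it suffices to bring each summand into the desired shape.

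Next I would normalize the finite part: since $E_1$ is regular, put $J := E_1^{-1}A_1$, $B_J := E_1^{-1}B_1$, $C_J := C_1$, so that $\rser{E_1^{-1}}{S_f}$ has the form $(I_k, J, B_J, C_J, 0)$ with $\sigma(J) = \sigma(E_1,A_1) = M_1 = \rho(E,A)^c \cap \mb{C}$; here $k$ is the size of the finite block. For the infinite part, since $A_3$ is regular I would set $N := A_3^{-1}E_3$; because $\sigma(E_3,A_3) = \{\infty\}$ means $\sigma(N) = \{0\}$, the matrix $N$ is nilpotent, say $N^\nu = 0$ for some $\nu \in \mb{N}$. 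Replacing $S_\infty$ by $\rser{(-A_3^{-1})}{S_\infty}$ gives $(N, I_{n-k}, B_N, C_N, 0)$ with $B_N := -A_3^{-1}B_3$ and $C_N := C_3$. Assembling these two equivalences and composing with the block-diagonalizing transformation from \autoref{thm:decsyl} yields the claimed $S_w \sim S$ of the stated block form, with the overall $P,Q$ still real and regular.

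It then remains to verify the formula \eqref{eq:ratpoly} and the domain identity. For the domain: $\det(sE_w - A_w) = \det(sI_k - J)\det(sN - I_{n-k})$; the second factor equals $\det(-I_{n-k})$ up to a nonzero scalar for every $s$ because $sN$ is nilpotent, hence never vanishes, so $\dom(\Fkt{S_w}) = \rho(J)$, and $\rho(J) = \rho(E,A)$ since the transformations preserve the spectrum (and $\infty$, being in $M_2$, is not in $\mb{C}$). For the transfer function, use block-diagonality and \autoref{rem:transop}: $\Fkt{S_w}(s) = C_J(sI-J)^{-1}B_J + C_N(sN - I_{n-k})^{-1}B_N + D$. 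The middle term is handled by the Neumann-type identity $(sN - I)^{-1} = -\sum_{i=0}^{\nu-1} s^i N^i$, valid since $N^\nu = 0$, giving $C_N(sN-I)^{-1}B_N = -\sum_{i=0}^{\nu-1} s^i C_N N^i B_N$, which is exactly the polynomial term in \eqref{eq:ratpoly}.

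The main obstacle is not any single computation but making sure the reduction to Weierstrass form is set up so that \autoref{thm:decsyl} applies cleanly: one must check that $M_1$ (finite spectrum) and $M_2 \subseteq \{\infty\}$ are disjoint with union $\sigma(E,A)$, handle the degenerate cases where there are no finite eigenvalues ($k = 0$) or no infinite ones ($k = n$, $N$ vacuous, $\nu$ arbitrary), and confirm that the further normalizations $E_1 \mapsto I_k$ and $A_3 \mapsto -I_{n-k}$ are themselves r.s.e.\ transformations so that composition stays within $\sim$. Once the canonical form is in hand, the identity \eqref{eq:ratpoly} and the domain claim are short verifications via \autoref{thm:schur} (for the determinant factorization) and the finite Neumann series.
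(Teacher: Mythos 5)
Your proposal is correct and follows essentially the same route as the paper: both apply \autoref{thm:decsyl} with $M_1=\sigma(E,A)\setminus\{\infty\}$ and $M_2=\{\infty\}$, normalize the finite block by $E_1^{-1}$ and the infinite block by $A_3^{-1}$, deduce nilpotency of $N$ from $\sigma(N,I)=\{\infty\}$ (hence $\sigma(N)=\{0\}$), and obtain \eqref{eq:ratpoly} via the finite Neumann series. One cosmetic slip: scaling the infinite part by $-A_3^{-1}$ yields $-I_{n-k}$ in the $A$-block rather than the stated $I_{n-k}$; use $+A_3^{-1}$ (so $B_N=A_3^{-1}B_3$), and the minus sign in \eqref{eq:ratpoly} then comes solely from $(sN-I)^{-1}=-\sum_{i=0}^{\nu-1}s^iN^i$.
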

\begin{proof}
 We apply \autoref{thm:decsyl} to $(E,A)$ with $M_1=\sigma(E,A)\backslash\{\infty\}$, $M_2=\{\infty\}$ and get regular $E_1,A_3,P,Q$ as in the theorem. Choose $\wt{P}:=\bsm E_1^{-1}&0\\0&I\esm P$, $\wt{Q}:=\bsm I&0\\0&A_3^{-1}\esm Q$ and set $S_w:=\rse{\wt{P}}{S}{\wt{Q}}$. The matrix $N:=E_3$ is nilpotent because $\sigma(N,I)=\{\infty\}$ and thus $\sigma(N)=\{0\}$. Equation \eqref{eq:ratpoly} follows from $\Fkt{S}=\Fkt{S_w}$ (\autoref{rem:transop}) and $C_N(sN-I)^{-1}B_N=-\sum_{i=0}^{\nu-1}s^iC_NN^iB_N$ (Neumann series).
\end{proof}

\begin{thm}[{\cite[Theorem 3.10, Theorem 3.17]{Zhou1996}}]\label{thm:uetrafmin2}\label{thm:minaeq}
Let $S=(I,A,B,C,D)\in\sys{n,p,m}$. Then
\begin{EQ}
S\sim\left(I_n,\bpm A_{11}&0&A_{13}&0\\A_{21}&A_{22}&A_{23}&A_{24}\\0&0&A_{33}&0\\0&0&A_{43}&A_{44}\epm,\bpm B_1\\B_2\\0\\0\epm,\bpm C_1&0&C_3&0\epm,D\right),
\end{EQ}
where $A_{ii}\in\mb{R}^{n_i\times n_i}$, $n_i\in\{0,\ldots,n\}$ for $i\in\{1,2,3,4\}$ and $S_M:=(I,A_{11},B_1,C_1,D)$ is a minimal realization of $\Fkt{S}$. This decomposition is known as the Kalman decomposition. All minimal realizations of $\Fkt{S}$ are r.s.e.\ to $S_M$.
\end{thm}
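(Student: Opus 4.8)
The plan is the classical Kalman--decomposition argument. Throughout we may take $P=Q^{-1}$, so restricted system equivalence of standard systems is an ordinary state space similarity $(I,A,B,C,D)\mapsto(I,Q^{-1}AQ,Q^{-1}B,CQ,D)$. Introduce the controllable subspace $\mc{C}:=\operatorname{Im}[B,AB,\dots,A^{n-1}B]$ and the unobservable subspace $\mc{N}:=\bigcap_{j=0}^{n-1}\ker(CA^{j})$; by the Cayley--Hamilton theorem $\mc{C}$, $\mc{N}$ and hence $\mc{C}\cap\mc{N}$ are $A$-invariant, $\operatorname{Im}B\subseteq\mc{C}$ and $\mc{N}\subseteq\ker C$. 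First I would fix a basis of $\mb{R}^{n}$ built up in the order: a basis of $\mc{C}\cap\mc{N}$ (these columns become ``block $2$''), extended inside $\mc{C}$ (the new columns become ``block $1$''), extended separately inside $\mc{N}$ (the new columns become ``block $4$''; the collected vectors then span $\mc{C}+\mc{N}$), and finally extended to a basis of $\mb{R}^{n}$ (the remaining columns become ``block $3$''). Collecting these columns in the order $1,2,3,4$ into $Q$ and writing $\wt{S}:=\rse{Q^{-1}}{S}{Q}=(I,\wt{A},\wt{B},\wt{C},D)$, the three invariances and the two inclusions force $\wt{A},\wt{B},\wt{C}$ to have exactly the displayed block pattern, with $n_{i}$ the number of columns of block $i$.

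Next I would show that the corner $S_{M}=(I,A_{11},B_{1},C_{1},D)$ is controllable and observable and realizes $\Fkt{S}$. Since $\wt{S}=\rse{Q^{-1}}{S}{Q}$, its controllable subspace is the span of the unit vectors indexed by blocks $\{1,2\}$ and its unobservable subspace the span of those indexed by blocks $\{2,4\}$. From the block structure, $\wt{A}^{\,j}\wt{B}$ is supported on blocks $\{1,2\}$ with block-$1$ part $A_{11}^{\,j}B_{1}$, while $\wt{C}\wt{A}^{\,j}$ vanishes on blocks $\{2,4\}$ and has $\wt{C}\wt{A}^{\,j}(v,0,0,0)^{\top}=C_{1}A_{11}^{\,j}v$; combining this with the two subspace identifications gives, by a one-line rank argument, controllability of $(A_{11},B_{1})$ and observability of $(A_{11},C_{1})$. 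For the transfer function, plug the block form into $C(sI-A)^{-1}B+D$ and solve $(sI-\wt{A})x=\wt{B}$ blockwise: the block-$3$ and block-$4$ rows give $x_{3}=x_{4}=0$, the block-$1$ row gives $x_{1}=(sI-A_{11})^{-1}B_{1}$, and the block-$2$ component of $x$ is annihilated by $\wt{C}=(C_{1},0,C_{3},0)$; hence $\Fkt{S}(s)=C_{1}(sI-A_{11})^{-1}B_{1}+D=\Fkt{S_{M}}(s)$, so $S_{M}\in\rez{\Fkt{S}}$.

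For minimality I would argue via Markov parameters. The expansion at $\infty$ reads $\Fkt{S}(s)=D+\sum_{k\ge0}(CA^{k}B)s^{-k-1}$, so $D$ and the matrices $CA^{k}B$ are invariants of $\Fkt{S}$; moreover $\Fkt{S}$ is proper, so by \autoref{lem:pspip} any descriptor realization of $\Fkt{S}$ has vanishing polynomial part and hence yields a standard realization of no larger dimension. For a standard realization of dimension $\ell$ the infinite block Hankel matrix $\mc{H}:=(CA^{k+l}B)_{k,l\ge0}$ equals the product of the observability matrix and the controllability matrix, so $\rank\mc{H}\le\ell$, with equality exactly when that realization is controllable and observable. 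Computed from $S_{M}$ this gives $\rank\mc{H}=n_{1}$, so every realization of $\Fkt{S}$ has dimension at least $n_{1}$ and $S_{M}$ is minimal; conversely a standard realization failing to be controllable or observable admits the strictly smaller realization produced as in the first paragraph, hence every minimal realization is controllable and observable.

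Finally, uniqueness up to r.s.e. Let $S_{i}=(I,A_{i},B_{i},C_{i},D_{i})$, $i\in\{1,2\}$, be minimal---hence controllable and observable---realizations of $\Fkt{S}$; they share the dimension $n_{1}$ and the Markov parameters, so their controllability matrices $\mc{R}_{i}$ (full row rank $n_{1}$) and observability matrices $\mc{Q}_{i}$ (full column rank $n_{1}$) satisfy $\mc{Q}_{1}\mc{R}_{1}=\mc{Q}_{2}\mc{R}_{2}$ and $\mc{Q}_{1}A_{1}\mc{R}_{1}=\mc{Q}_{2}A_{2}\mc{R}_{2}$. Picking left inverses $\mc{Q}_{i}^{L}$ and right inverses $\mc{R}_{i}^{R}$ and setting $T:=\mc{Q}_{1}^{L}\mc{Q}_{2}$, $U:=\mc{Q}_{2}^{L}\mc{Q}_{1}$, the first identity gives $T\mc{R}_{2}=\mc{R}_{1}$ and $U\mc{R}_{1}=\mc{R}_{2}$, whence $UT=I=TU$ by right-invertibility of the $\mc{R}_{i}$, so $T$ is invertible with $T^{-1}=U$. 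Reading off first block columns and rows gives $TB_{2}=B_{1}$ and $C_{2}T^{-1}=C_{1}$; the shifted identity gives $TA_{2}T^{-1}=A_{1}$; and $D_{1}=D_{2}=\lim_{s\to\infty}\Fkt{S}(s)$. Thus $S_{1}=\rse{T}{S_{2}}{T^{-1}}$, i.e.\ $S_{1}\sim S_{2}$, and in particular every minimal realization is r.s.e.\ to $S_{M}$. The step I expect to take the most care is not any single computation but the bookkeeping of the first paragraph---ordering the basis so that every one of the prescribed zero blocks appears and checking that the intermediate extension really spans $\mc{C}+\mc{N}$---together with building $T$ from one-sided inverses of non-square matrices and verifying that it intertwines $S_{1}$ and $S_{2}$.
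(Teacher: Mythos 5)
The paper offers no proof of this statement at all---it is imported verbatim from \cite[Theorems 3.10 and 3.17]{Zhou1996}---so there is nothing in-paper to compare against; what you have written is the classical textbook argument, and it is essentially correct. The basis construction (with $P=Q^{-1}$, as needed so that $E=I_n$ survives the r.s.e.) does force exactly the displayed zero pattern, your blockwise computation shows $S_M$ is a controllable and observable realization of $\Fkt{S}$, the Hankel-matrix factorization gives the realization-independent lower bound $n_1$ on the dimension, and the state-space isomorphism argument gives uniqueness up to similarity. Two remarks. First, the relations $TB_2=B_1$, $C_2T^{-1}=C_1$, $TA_2T^{-1}=A_1$ rest on the auxiliary identities $\mc{Q}_1T=\mc{Q}_2$ and $T=\mc{R}_1\mc{R}_2^{R}$, obtained from $\mc{Q}_1\mc{R}_1=\mc{Q}_2\mc{R}_2$ by multiplying with the one-sided inverses; you flag this but do not write it out---it does go through.

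Second, and more substantive relative to this paper's conventions: here ``realization'' and hence ``minimal'' quantify over \emph{all} descriptor realizations, so the final claim ``all minimal realizations of $\Fkt{S}$ are r.s.e.\ to $S_M$'' also covers minimal realizations with $E\neq I$ (e.g.\ $(2I,2A_{11},2B_1,C_1,D)$), whereas your uniqueness paragraph only treats standard ones. You used \autoref{lem:pspip} for the dimension bound but not for this last claim. The missing step is one sentence: since $\Fkt{S}$ is proper, a minimal realization must have $E$ regular (otherwise the decomposition of \autoref{lem:pspip} produces a strictly smaller standard realization), hence it is r.s.e.\ to the standard system $(I,E^{-1}A,E^{-1}B,C,D)$ via $P=E^{-1}$, $Q=I$, which is again minimal and to which your similarity argument applies; by transitivity of r.s.e.\ the claim follows in full generality. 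With that sentence added, your proof completely establishes the statement as it is used in the paper.
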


Now we introduce the controllability and observability Gramians. Later these matrices will play a central role in our problem of computing an optimal $RH_\infty$-approximation. The next theorem follows easily from \cite[Corollary 2]{Penzl1998}. 

\begin{thm}[{\cite{Penzl1998}}, Lyapunov equation]\label{thm:lyap}
Let $S=(E,A,B,C,D)\in\sys{n,p,m}^-$. Then there exist $X_c,\,X_o\in\mb{R}^{n\times n}$ such that \index{Lyapunov equation}
\begin{EQ}[rl]\label{eq:lyap}
AX_cE^{\top}+EX_cA^{\top}+BB^{\top}&=0\,,\\
A^{\top}X_oE+E^{\top}X_oA+C^{\top}C&=0\,.
\end{EQ}
They are unique and symmetric. We denote them by $\Cont{S}:=X_c$ (\indx{controllability Gramian} of system $S$) and $\Obs{S}:=X_o$ (\indx{observability Gramian}). 
\end{thm}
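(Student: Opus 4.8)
The plan is to reduce the generalized (descriptor) Lyapunov equations \eqref{eq:lyap} to standard ones by exploiting that for $S\in\sys{n,p,m}^-$ the matrix $E$ is regular. First I would multiply the first equation in \eqref{eq:lyap} on the left by $E^{-1}$ and on the right by $E^{-\top}$, and set $\hat A:=E^{-1}A$, $\hat B:=E^{-1}B$. Since $\sigma(E,A)=\sigma(\hat A)\subseteq\mb{C}_{>0}$ (here one uses $\mb{C}_{\leq0}\subseteq\rho(E,A)$ together with $E$ regular, so in particular $\infty\notin\sigma(E,A)$), the matrix $-\hat A$ is \emph{stable} in the classical sense, i.e.\ $\sigma(-\hat A)\subseteq\mb{C}_{<0}$. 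The transformed equation reads $\hat A\,\hat X_c+\hat X_c\hat A^{\top}+\hat B\hat B^{\top}=0$ with $\hat X_c:=E^{-1}X_cE^{-\top}$, equivalently $(-\hat A)\hat X_c+\hat X_c(-\hat A)^{\top}=\hat B\hat B^{\top}$. This is a classical Lyapunov/Sylvester equation with a Hurwitz coefficient, so by the cited \cite[Corollary 2]{Penzl1998} (or the standard theory) it has a unique solution $\hat X_c$, and then $X_c:=E\hat X_c E^{\top}$ is the unique solution of the original equation; uniqueness transfers back because $E$ is invertible. The observability equation is handled symmetrically, multiplying by $E^{-\top}$ on the left and $E^{-1}$ on the right and setting $\hat X_o:=E^{\top}X_oE$, which yields $(-\hat A)^{\top}\hat X_o+\hat X_o(-\hat A)=C^{\top}C$.

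For symmetry, I would take transposes: $\hat X_c^{\top}$ also solves $(-\hat A)\hat X_c^{\top}+\hat X_c^{\top}(-\hat A)^{\top}=\hat B\hat B^{\top}$ (since the right-hand side $\hat B\hat B^{\top}$ is symmetric), so by the already-established uniqueness $\hat X_c^{\top}=\hat X_c$, hence $X_c=E\hat X_c E^{\top}$ is symmetric as well; likewise for $X_o$. Alternatively one can invoke the closed-form integral representation $\hat X_c=\int_0^\infty \eu^{-\hat A t}\hat B\hat B^{\top}\eu^{-\hat A^{\top}t}\,dt$, which converges because $-\hat A$ is Hurwitz and is manifestly symmetric and positive semidefinite; this simultaneously gives existence, symmetry, and an independent route to uniqueness.

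The only genuinely delicate point is matching hypotheses with the exact statement of \cite[Corollary 2]{Penzl1998}: that reference is phrased for generalized Lyapunov equations with a pencil whose spectrum lies in the open left (or right) half-plane, and one must check that the sign conventions there correspond to our antistable case $\sigma(E,A)\subseteq\mb{C}_{>0}$ after the substitution $s\mapsto -s$ (equivalently, replacing $A$ by $-A$). Concretely, the pencil $(E,-A)$ has spectrum in $\mb{C}_{<0}$ and $E$ regular, which is precisely the setting in which the generalized Lyapunov equation $(-A)X_cE^{\top}+EX_c(-A)^{\top}=-BB^{\top}$ (note the sign on the right, matching the sign flip of the left-hand side) is uniquely solvable; rearranging recovers the first line of \eqref{eq:lyap}. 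So the substance of the proof is a bookkeeping reduction via $E^{-1}$ plus one sign change; no estimates or genuinely new arguments are needed beyond the cited corollary.
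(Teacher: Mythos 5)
The paper offers no proof of this theorem to compare against: it is imported wholesale from \cite[Corollary 2]{Penzl1998} with the remark that it ``follows easily'' from that reference. Your reduction to the standard Lyapunov equation via the regularity of $E$ is the natural self-contained argument, and its skeleton is sound: $\sigma(E,A)=\sigma(E^{-1}A)\subseteq\mb{C}_{>0}$ (regularity of $E$ rules out the eigenvalue $\infty$), so $-E^{-1}A$ is Hurwitz, the classical Sylvester/Lyapunov theory gives existence and uniqueness, and symmetry follows from uniqueness because transposing either equation in \eqref{eq:lyap} reproduces the same equation with $X^{\top}$ in place of $X$. This is also exactly the mechanism behind the paper's own \autoref{lem:XYaeq}.

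There are, however, two sign/bookkeeping slips. First, the controllability substitution is wrong: pre-multiplying $AX_cE^{\top}+EX_cA^{\top}+BB^{\top}=0$ by $E^{-1}$ and post-multiplying by $E^{-\top}$ yields $\hat A X_c+X_c\hat A^{\top}+\hat B\hat B^{\top}=0$ with the \emph{unchanged} unknown $X_c$, i.e.\ $\Cont{S}=\Cont{\rser{E^{-1}}{S}}$ (this is \autoref{lem:XYaeq} with $P=E^{-1}$, $Q=I$). Your recipe --- solve for $\hat X_c$ and set $X_c:=E\hat X_cE^{\top}$ --- therefore returns a matrix that does not satisfy the original equation; the correct recovery is simply $X_c=\hat X_c$. (Your observability substitution $\hat X_o=E^{\top}X_oE$ is correct.) Second, your sign conventions for the antistable case are off: $(-A)X_cE^{\top}+EX_c(-A)^{\top}$ equals $+BB^{\top}$, not $-BB^{\top}$, and correspondingly the integral $\int_0^\infty\eu^{-\hat At}\hat B\hat B^{\top}\eu^{-\hat A^{\top}t}\,dt$ solves $\hat AX+X\hat A^{\top}-\hat B\hat B^{\top}=0$; the Gramian is its \emph{negative} and is negative semidefinite, consistent with the blocks $-hI_r$ in \autoref{thm:bal} and $-\gamma_1$ in the proof of \autoref{thm:causal2}, not positive semidefinite as you claim. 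Neither slip affects existence, uniqueness or symmetry, which is all the theorem asserts, but both would matter the moment the Gramians are actually computed or their definiteness is used, as it is downstream in the $RH_\infty$ construction.
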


\begin{lem}\label{lem:XYaeq} 
Let $S=(E,A,B,C,D)\in\sys{n,p,m}^+$ and $E$ be regular. Let $P,Q\in\mb{R}^{n\times n}$ be regular. Then $\Cont{S}=Q\Cont{\rse{P}{S}{Q}}Q^{\top} \text{ and }\Obs{S}=P^{\top}\Obs{\rse{P}{S}{Q}}P$ hold. 
\end{lem}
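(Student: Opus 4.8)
The plan is to reduce both identities to the uniqueness statement in \autoref{thm:lyap}. Put $\wt{S}:=\rse{P}{S}{Q}=(PEQ,PAQ,PB,CQ,D)$. First I would record that $\wt S$ satisfies the hypotheses of the lemma as well: since $P,Q$ are regular, $\det(s\,PEQ-PAQ)=\det(P)\det(sE-A)\det(Q)$, so $\sigma(PEQ,PAQ)=\sigma(E,A)$ and $PEQ$ is regular; hence $\wt S\in\sys{n,p,m}^+$ with regular ``$E$-matrix'', and the Gramians $\Cont{\wt S}$, $\Obs{\wt S}$ are well-defined, unique and symmetric just as for $S$.

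For the controllability Gramian I would start from the defining equation for $\wt S$,
\[
 (PAQ)\,\Cont{\wt S}\,(PEQ)^{\top}+(PEQ)\,\Cont{\wt S}\,(PAQ)^{\top}+(PB)(PB)^{\top}=0,
\]
expand the transposes as $(PEQ)^{\top}=Q^{\top}E^{\top}P^{\top}$ and so on, and multiply the whole identity by $P^{-1}$ from the left and by $P^{-\top}$ from the right. This collapses to
\[
 A\,\bigl(Q\Cont{\wt S}Q^{\top}\bigr)\,E^{\top}+E\,\bigl(Q\Cont{\wt S}Q^{\top}\bigr)\,A^{\top}+BB^{\top}=0 ,
\]
that is, $Q\Cont{\wt S}Q^{\top}$ solves the controllability Lyapunov equation of $S$. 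Since $\Cont{\wt S}$ is symmetric, so is $Q\Cont{\wt S}Q^{\top}$, and the uniqueness in \autoref{thm:lyap} forces $\Cont{S}=Q\Cont{\wt S}Q^{\top}$. The observability part is entirely analogous: starting from $(PAQ)^{\top}\Obs{\wt S}(PEQ)+(PEQ)^{\top}\Obs{\wt S}(PAQ)+(CQ)^{\top}(CQ)=0$ and multiplying by $Q^{-\top}$ from the left and by $Q^{-1}$ from the right yields $A^{\top}\bigl(P^{\top}\Obs{\wt S}P\bigr)E+E^{\top}\bigl(P^{\top}\Obs{\wt S}P\bigr)A+C^{\top}C=0$, whence $\Obs{S}=P^{\top}\Obs{\wt S}P$ again by uniqueness and symmetry.

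There is no real obstacle here; the only points needing care are (i) checking that restricted system equivalence keeps us inside the class where the Gramians exist, so that it is legitimate to speak of $\Cont{\wt S}$ and $\Obs{\wt S}$ at all, and (ii) bookkeeping the left/right factors and their transposes so that the conjugated identity is exactly the Lyapunov equation attached to $S$ and not a perturbed version of it — note in particular that the appropriate multipliers are $P^{-1},P^{-\top}$ for the controllability equation but $Q^{-\top},Q^{-1}$ for the observability equation, which is precisely what produces the asymmetry $Q\,(\cdot)\,Q^{\top}$ versus $P^{\top}\,(\cdot)\,P$ in the statement.
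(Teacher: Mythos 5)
Your proof is correct and takes essentially the same route as the paper: write down the Lyapunov equation defining $\Cont{\rse{P}{S}{Q}}$, factor out $P$ and $P^{\top}$ (resp.\ $Q^{\top}$ and $Q$) to see that $Q\Cont{\rse{P}{S}{Q}}Q^{\top}$ solves the controllability Lyapunov equation of $S$, and conclude by the uniqueness in \autoref{thm:lyap}. The only addition is your explicit check that $\rse{P}{S}{Q}$ remains in the class where the Gramians are defined, which the paper leaves implicit.
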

\begin{proof}
We prove this only for $\Cont{S}$ since $\Obs{S}$ is analogously. By definition of $\Cont{\rse{P}{S}{Q}}$ 
\begin{EQ}[rl]
 0&=PAQ\Cont{\rse{P}{S}{Q}}Q^{\top}E^{\top}P^{\top}+PEQ\Cont{\rse{P}{S}{Q}}Q^{\top}A^{\top}P^{\top}+PBB^{\top}P^{\top}\,,\\
  &=P\biggl(A(Q\Cont{\rse{P}{S}{Q}}Q^{\top})E^{\top}+E(Q\Cont{\rse{P}{S}{Q}}Q^{\top})A^{\top}+BB^{\top}\biggr)P^{\top}
\end{EQ}
hold. Due to the regularity of $P$ and $Q$ and the uniqueness of the solution of \eqref{eq:lyap} (\autoref{thm:lyap}) the assertion is proved.
\end{proof}

\begin{thm}[{\cite[Theorem 4.3]{Glover1984}}]\label{thm:bal}
Let $S=(I,A,B,C,D)\in\sys{n,p,m}^-$. Then there exist $S_b\sim S$, $r\in\mb{N}$, $h\in\mb{R}_{\geq0}$ such that
\begin{align*}
 \Cont{S_b}=\bpm \Sigma_1&0\\0&-h I_r\epm,&&\Obs{S_b}=\bpm \Sigma_2&0\\0&-h I_r\epm,&&h^2\notin\sigma(\Sigma_1\Sigma_2)\,.
\end{align*}
\end{thm}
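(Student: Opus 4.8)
The plan is to pass by a sign flip from the given antistable system to an asymptotically stable one, reduce the claim to a simultaneous reduction of the two positive semidefinite Gramians, and carry that out explicitly (for the non-minimal case this is the content of \cite[Theorem 4.3]{Glover1984}). Since $S\in\sys{n,p,m}^-$ and $E=I$ force $\sigma(A)\subseteq\mb{C}_{>0}$, the system $\bar S:=(I,-A,B,C,D)$ satisfies $\sigma(-A)\subseteq\mb{C}_{<0}$, i.e.\ $\bar S\in\sys{n,p,m}^+$. First I would observe that $-\Cont{\bar S}$ and $-\Obs{\bar S}$ solve the Lyapunov equations \eqref{eq:lyap} of $S$, so the uniqueness in \autoref{thm:lyap} gives $\Cont{S}=-\Cont{\bar S}$ and $\Obs{S}=-\Obs{\bar S}$; write $P_c:=\Cont{\bar S}$, $P_o:=\Obs{\bar S}$, which are symmetric and positive semidefinite. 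For regular $P\in\mb{R}^{n\times n}$, passing from $S$ to $\rse{P}{S}{P^{-1}}$ preserves $E=I$ and gives $S_b:=(I,PAP^{-1},PB,CP^{-1},D)\sim S$ whose stable counterpart is $\rse{P}{\bar S}{P^{-1}}=(I,-PAP^{-1},PB,CP^{-1},D)$; by \autoref{lem:XYaeq} applied to $\bar S$ we get $\Cont{\rse{P}{\bar S}{P^{-1}}}=PP_cP^{\top}$, $\Obs{\rse{P}{\bar S}{P^{-1}}}=P^{-\top}P_oP^{-1}$, and hence, by the same sign relation now for $S_b$, $\Cont{S_b}=-PP_cP^{\top}$ and $\Obs{S_b}=-P^{-\top}P_oP^{-1}$. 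It therefore suffices to find a regular real $P$ and symmetric positive semidefinite $Y_c,Y_o$ with $PP_cP^{\top}=\bsm Y_c&0\\0&hI_r\esm$, $P^{-\top}P_oP^{-1}=\bsm Y_o&0\\0&hI_r\esm$ and $h^2\notin\sigma(Y_cY_o)$; then $\Sigma_1:=-Y_c$, $\Sigma_2:=-Y_o$ are as required, because $\Sigma_1\Sigma_2=Y_cY_o$.

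Next I would perform this simultaneous (contragredient) reduction of the positive semidefinite pair $(P_c,P_o)$. The nonzero eigenvalues of $P_cP_o$ agree, with multiplicities, with those of the symmetric positive semidefinite matrix $P_o^{1/2}P_cP_o^{1/2}$; so they are positive and semisimple, $\lambda_{\max}(P_cP_o)\ge0$ has a well-defined multiplicity, and $\lambda_{\max}(P_cP_o)=0$ forces $P_cP_o=0$ (then $P_o^{1/2}P_cP_o^{1/2}$ is a nilpotent positive semidefinite matrix, hence zero). In the degenerate case $P_cP_o=0$ take $r:=0$, $h:=1$, $S_b:=S$: the block forms collapse to $\Sigma_1=\Cont{S}$, $\Sigma_2=\Obs{S}$, and $h^2=1$ is not an eigenvalue of $\Cont{S}\Obs{S}=0$. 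Otherwise put $h:=\sqrt{\lambda_{\max}(P_cP_o)}>0$, with multiplicity $r$. If $P_c$ and $P_o$ are positive definite (equivalently $\bar S$ is minimal), the classical recipe applies: factor $P_o=R^{\top}R$ with $R$ regular, orthogonally diagonalize $RP_cR^{\top}=U\Lambda U^{\top}$ with the $\lambda_{\max}$-part of $\Lambda$ ordered last, and set $P:=\Lambda^{-1/4}U^{\top}R$; a short computation gives $PP_cP^{\top}=P^{-\top}P_oP^{-1}=\Lambda^{1/2}=\diag(\Lambda_0^{1/2},hI_r)$ with $h^2\notin\sigma(\Lambda_0)$, so $\Sigma_1=\Sigma_2=-\Lambda_0^{1/2}$. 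Regardless of minimality, once $P$ has the asserted form $\diag(Y_cY_o,h^2I_r)$ is similar to $P_cP_o$, so every eigenvalue of $Y_cY_o$ is one of $P_cP_o$; since $h^2$ is maximal and removed with its full multiplicity $r$, we get $h^2\notin\sigma(Y_cY_o)$ for free.

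The step I expect to be the \textbf{main obstacle} is making the simultaneous reduction go through --- and stay real and regular --- when $\bar S$ is \emph{not} minimal, i.e.\ $P_c$ or $P_o$ is singular and the factor $R$ above cannot be inverted. The remedy is to first apply the Kalman decomposition (\autoref{thm:minaeq}) to $\bar S$, balance its minimal part by the invertible recipe, and push the uncontrollable/unobservable remainder into the blocks $Y_c,Y_o$ --- this is essentially \cite[Theorem 4.3]{Glover1984}. What keeps the $hI_r$ block common to both Gramians is the structural fact that a nonzero Hankel singular value (and $h^2\ne0$ here, by the choice $h=\sqrt{\lambda_{\max}(P_cP_o)}$) corresponds to a mode lying in $\operatorname{range}(P_c)\cap\operatorname{range}(P_o)$, so the defective directions never meet the $h^2$-eigenspace and can be absorbed into $Y_c,Y_o$ without touching it. The remaining bookkeeping --- ordering eigenvalues, verifying realness and invertibility of $P$, and the trivial case $P_cP_o=0$ --- is routine.
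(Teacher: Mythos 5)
The paper never proves this statement: it is imported, with sign conventions adjusted for antistable Gramians, straight from Glover's Theorem 4.3, so there is no internal proof to compare yours against and the only question is whether your argument is self-contained. The parts you carry out in detail are correct: the sign relations $\Cont{S}=-\Cont{\bar S}$, $\Obs{S}=-\Obs{\bar S}$ follow from the uniqueness in \autoref{thm:lyap}; the transformation rule under $\rse{P}{S}{P^{-1}}$ is exactly \autoref{lem:XYaeq}; the square-root recipe $P:=\Lambda^{-1/4}U^{\top}R$ settles the case $P_c,P_o\succ0$; and your remark that $h^2\notin\sigma(Y_cY_o)$ is automatic once the block form is reached with the full multiplicity $r$ (because $\diag(Y_cY_o,h^2I_r)$ is similar to $P_cP_o$) disposes of the last condition neatly. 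Your choice $h=\sqrt{\lambda_{\max}(P_cP_o)}$ is also the one the paper needs later (\autoref{lem:sing} uses $h=\sigma_1$).

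The step you yourself flag as the main obstacle, however, is where the entire content of Glover's Theorem 4.3 sits, and your treatment of it is not a proof: for singular $P_c$ or $P_o$ you appeal to ``essentially [Glover, Theorem 4.3]'', i.e.\ to the statement being proved, and the structural claim offered in support is false as stated. An eigenvector of $P_cP_o$ for a nonzero eigenvalue need not lie in $\operatorname{range}(P_c)\cap\operatorname{range}(P_o)$: with $P_c=\bsm 1&1\\1&2\esm$, $P_o=\bsm 1&0\\0&0\esm$ one gets $P_cP_o=\bsm 1&0\\1&0\esm$, whose eigenvector $(1,1)^{\top}$ for the eigenvalue $1$ is not in $\operatorname{range}(P_o)$. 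What actually makes the separation work is weaker but sufficient: if the columns of $V$ span the $h^2$-eigenspace of $P_cP_o$, then $V$ lies in $\operatorname{range}(P_c)$, $P_oV$ spans the $h^2$-eigenspace of $P_oP_c$, and $V^{\top}P_oV\succ0$ (since $x^{\top}V^{\top}P_oVx=0$ forces $P_oVx=0$ and hence $h^2Vx=0$), which is what lets you build a real invertible contragredient $T$ decoupling the $hI_r$ block in \emph{both} Gramians at once without first making the system minimal; alternatively, quote the contragredient canonical form of a pair of positive semidefinite matrices. As written, the nonminimal case is a genuine gap (or, read charitably, a re-citation of the same external result the paper itself leans on), although the surrounding reductions are sound and the gap is fillable along the lines just indicated.
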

If $\Sigma_1=\Sigma_2$ are diagonal, then $S_b$ is known as the \indx{balanced realization} of $\Fkt{S}$. 


To check whether $\lambda\in\mb{C}$ is an eigenvalue of $\penc{E,A}$, we need the following easy generalization of a known result for standard systems (e.g.\  {\cite[Theorem 5.6.1 c]{Mackenroth2004}}).

\begin{lem}\label{lem:pol2}
Let $G\in\rat{p}{m}$ have a realization $S=(E,A,B,C,D)\in\sys{n,p,m}$. Then for every finite $\lambda\in\sigma\!\penc{E,A}$ at least one of the following three statements holds:
\begin{flalign*}
 (i)~\lambda \text{ is a pole of } G,&&(ii)~\rank\bpm \lambda E-A&B\epm<n,&&(iii)~\rank\bpm \lambda E-A\\C\epm<n\,.
\end{flalign*}
\end{lem}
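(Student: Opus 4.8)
The plan is to argue by contraposition: I would assume that all three statements fail for some finite $\lambda\in\sigma\penc{E,A}$, and derive the contradiction $\lambda\notin\sigma\penc{E,A}$. So suppose $\lambda$ is not a pole of $G$, and that both $\rank\bsm \lambda E-A&B\esm=n$ and $\rank\bsm \lambda E-A\\C\esm=n$. The first rank condition says the rows of $\lambda E-A$ together with the columns of $B$ span $\mb{R}^n$; since $\lambda E-A$ is singular (as $\lambda\in\sigma\penc{E,A}$), there is a nonzero left null vector $w^\top(\lambda E-A)=0$, and the full-rank condition forces $w^\top B\neq 0$. Dually, there is a nonzero right null vector $(\lambda E-A)v=0$ with $Cv\neq 0$.

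The key step is to use the fact that $\lambda$ is not a pole of $G$, i.e.\ $G$ is analytic at $\lambda$, to produce a contradiction with the existence of these null vectors. Here I would mimic the standard-system proof: for $s$ in a punctured neighbourhood of $\lambda$ we have $G(s)=C(sE-A)^{-1}B+D$, and I want to show that $w^\top(\cdot)v$-type pairings of the resolvent blow up. Concretely, write $sE-A=(\lambda E-A)+(s-\lambda)E$ and analyze $(sE-A)^{-1}v$ and $w^\top(sE-A)^{-1}$ near $s=\lambda$. Since $(\lambda E-A)v=0$, one expects $(sE-A)^{-1}v\sim \frac{1}{s-\lambda}(\text{something involving }v)$ as $s\to\lambda$ — more carefully, from $(sE-A)(sE-A)^{-1}v=v$ and $(sE-A)=(\lambda E-A)+(s-\lambda)E$, if $(sE-A)^{-1}v$ stayed bounded its limit $u$ would satisfy $(\lambda E-A)u=v$, contradicting $w^\top(\lambda E-A)=0$, $w^\top v$; but $w^\top v$ need not be nonzero, so this needs the two-sided argument. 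The clean way: set $\phi(s):=C(sE-A)^{-1}B$; then $\phi$ is analytic at $\lambda$ by assumption, but I will show $\phi$ has a pole there. Multiplying the identity $(sE-A)^{-1}=(\lambda E-A+(s-\lambda)E)^{-1}$ on the right by $v$ and isolating the singular behaviour — e.g.\ via a local Smith/Weierstrass form of the pencil $sE-A$ near the eigenvalue $\lambda$ — exhibits a term $\frac{1}{(s-\lambda)^j}$ ($j\geq 1$ the partial multiplicity) hitting $v$, and the observability-type condition $Cv\neq 0$ together with controllability-type condition $w^\top B\neq 0$ guarantees this singular term survives in $C(sE-A)^{-1}B$, so $\phi$ is not analytic at $\lambda$ — contradiction.

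Alternatively, and perhaps more self-containedly using only what is in the excerpt, I would reduce to \autoref{lem:pspip}: replace $S$ by an r.s.e.\ realization $S_w$ in Weierstrass-like form $\penc{\bsm I_k&0\\0&N\esm,\bsm J&0\\0&I_{n-k}\esm,\dots}$, which changes neither $G=\Fkt{S}$ nor $\sigma\penc{E,A}=\sigma(J)$, and, by \autoref{lem:XYaeq}-style conjugation, transforms the matrices in (ii), (iii) by invertible left/right multiplications so the ranks are unchanged. For the finite eigenvalue $\lambda\in\sigma(J)$ the infinite block is irrelevant (the bottom $n-k$ rows of $\lambda E-A$ there are full rank), so everything reduces to the standard system $(I,J,B_J,C_J,D)$, and then I can invoke the known standard-system result {\cite[Theorem 5.6.1 c]{Mackenroth2004}} directly, noting that a pole of $C_J(sI-J)^{-1}B_J$ is a pole of $G$ by \eqref{eq:ratpoly} since the extra polynomial term $-\sum s^i C_N N^i B_N$ is entire.

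The main obstacle I anticipate is the bookkeeping in passing from the descriptor pencil to the standard part without losing track of the rank conditions: I must check that the transformations $U,V$ (orthogonal) and the subsequent scalings by $E_1^{-1}$, $A_3^{-1}$ used in \autoref{lem:pspip} act on $\bsm \lambda E-A&B\esm$ and $\bsm \lambda E-A\\C\esm$ as left/right multiplication by invertible matrices (after padding with identities on the $B$- and $C$-sides), hence preserve rank, and that the $\infty$-eigenvalue block contributes a full-rank strip for finite $\lambda$ so that the rank drops, if any, are entirely attributable to the $(I,J,\dots)$ subsystem. Once that reduction is clean, the standard-system statement does the rest.
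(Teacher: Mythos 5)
Your second, ``alternative'' route --- reducing to the Weierstrass-like form of \autoref{lem:pspip}, observing that the rank conditions in (ii) and (iii) are preserved under the invertible left/right transformations and that the regular block $\lambda N-I$ contributes a full-rank strip for finite $\lambda$, then invoking the standard-system result of Mackenroth together with \eqref{eq:ratpoly} for the pole equivalence --- is exactly the paper's proof. The proposal is correct; the first, direct analytic sketch is unnecessary (and incomplete), but the reduction you prefer is the one the paper carries out.
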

\begin{proof}
Let $S_w\sim S$ be as in \autoref{lem:pspip}. We set $S_J:=(I,J,B_J,C_J,D)(\in\sys{k,p,m})$. By \autoref{lem:pspip} we know that $\lambda\in\sigma(E,A)\backslash\left\{\infty\right\}$ $\Leftrightarrow$ $\lambda\in\sigma(J)$. By \cite[Theorem 5.6.1 c]{Mackenroth2004} one of the following three statements holds:
\begin{flalign*}
 (i')~\lambda \text{ is a pole of } \Fkt{S_J},&&(ii')~\rank\bpm \lambda I-J&B_J\epm<k,&&(iii')~\rank\bpm \lambda I-J\\C_J\epm<k\,.
\end{flalign*}
Equation \eqref{eq:ratpoly} shows that $(i)\Leftrightarrow(i')$. By the regularity of $\lambda N-I$ we have
\[\rank\bpm \lambda I-J&B_J\epm+(n-k)=\rank\bpm \lambda I-J&0&B_J\cr 0&\lambda N-I&B_N\epm=\rank\bpm \lambda E-A&B\epm\]
and analogously $\rank\bsm \lambda I-J\\C_J\esm+(n-k)=\rank\bsm \lambda E-A\\C\esm$. Thus $(ii)\Leftrightarrow(ii')$ and $(iii)\Leftrightarrow(iii')$. This finishes the proof.
\end{proof}

\begin{defn}[{\cite[Section 4.3]{Zhou1996}}]\label{d_rloo}We approximate unstable systems with respect to the following spaces and corresponding norms. Let $p,m\in\mb{N}$.
\begin{itemize}
 \item $RL_\infty^{p\times m}:=\left\{G\in\rat{p}{m};~\ie\mb{R}\subseteq\dom(G),~\left\|G\right\|_\infty<\infty\right\}$, $\left\|G\right\|_\infty:=\sup\limits_{\omega\in \mb{R}}\left\|G(\ie\omega)\right\|_2$;
 \item $RH_\infty^{p\times m}:=\left\{G\in RL_\infty^{p\times m};~G \text{ analytic on } \mb{C}_{>0}\right\}$;
 \item $RL_2^{p\times m}:=\left\{G\in\rat{p}{m};~\ie\mb{R}\subseteq\dom(G),~\left\|G\right\|_2<\infty\right\}$, $\left\|G\right\|_2:=\sqrt{\frac{1}{2\pi}\int\limits_{-\infty}^\infty\|G(\ie\omega)\|_F^2d\omega}$;
  \item $RH_2^{p\times m}:=\left\{G\in RL_2^{p\times m};~G \text{ analytic on } \mb{C}_{>0}\right\}$.
\end{itemize}
We omit the indices, if there is no risk of confusion.
\end{defn}


\begin{rem}
These spaces are often introduced as a subset of socalled real rational functions, i.e.\  entrywise a quotient of polynomials with real coefficients. Actually, every real rational function is a transfer function (see \cite[Theorem 2-6.3]{Dai1989}) and vice versa, since $G(s)_{i,j}=e_i^{\top}C(sE-A)^{-1}Be_j+D_{i,j}$ is the Schur complement of $sE-A$ in $M(s):=\bsm sE-A&Be_j\\e_i^{\top}C&D_{i,j}\esm$ and thus $G(s)_{i,j}=\det(G(s)_{i,j})=\frac{\det(M(s))}{\det(sE-A)}$ by \autoref{thm:schur}. 
Let $F$ and $G$ be elements of one of these spaces with $\rez{F}=\rez{G}$, e.g.\  $s\mapsto \frac{1}{s-1}$ and $s\mapsto \frac{s-2}{(s-1)(s-2)}$. By definition they are equal on the cofinite set $\dom(F)\cap\dom(G)$. Therefore we identify them as the same function. With this and by the identity theorem for analytic functions the spaces are indeed normed spaces. \end{rem}

\begin{thm}[{\cite[Section 2.3]{Francis1987}}]\label{thm:orth} 
Let $G_-\in RL_2$ be analytic on $\mb{C}_{<0}$ and $G_+\in RH_2$. Then $\|G_-+G_+\|^2_2=\|G_-\|^2_2+\|G_+\|^2_2$ holds.
\end{thm}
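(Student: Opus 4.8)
The plan is to reduce the identity to the vanishing of a single cross integral and then kill that integral by a contour argument. Pointwise on $\ie\mb{R}$ the Frobenius norm satisfies $\|A+B\|_F^2=\|A\|_F^2+\|B\|_F^2+2\Re\trace(A^*B)$, so integrating over $\omega$ and invoking \autoref{d_rloo} yields
\[
\|G_-+G_+\|_2^2=\|G_-\|_2^2+\|G_+\|_2^2+\frac{1}{\pi}\,\Re\int_{-\infty}^{\infty}\trace\bigl(G_-(\ie\omega)^*G_+(\ie\omega)\bigr)\,d\omega .
\]
Hence it suffices to show $\int_{-\infty}^{\infty}\trace\bigl(G_-(\ie\omega)^*G_+(\ie\omega)\bigr)\,d\omega=0$.

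First I would record two structural facts. Since $G_-,G_+\in RL_2$, both are entrywise \emph{strictly proper} real rational functions: a nonzero constant or polynomial part would prevent $G(\ie\omega)$ from tending to $0$ as $|\omega|\to\infty$, contradicting $\|G\|_2<\infty$. Combining this with the analyticity hypotheses, $G_+$ has all its poles in $\mb{C}_{<0}$ (analytic on $\mb{C}_{>0}$ by definition of $RH_2$, and without poles on $\ie\mb{R}$ since $G_+\in RL_2$), and $G_-$ has all its poles in $\mb{C}_{>0}$ (analytic on $\mb{C}_{<0}$ by assumption, no pole on $\ie\mb{R}$). Using that $G_-$ has real coefficients, $G_-(\ie\omega)^*=\overline{G_-(\ie\omega)}^{\top}=G_-(-\ie\omega)^{\top}$, so the substitution $s=\ie\omega$ turns the cross integral into $-\ie\int_{-\ie\infty}^{\ie\infty}H(s)\,ds$ with the rational function $H(s):=\trace\bigl(G_-(-s)^{\top}G_+(s)\bigr)$.

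Now $s\mapsto G_+(s)$ has no pole in $\mb{C}_{\geq0}$, and $s\mapsto G_-(-s)$ has poles exactly at the points $-p$ with $p$ a pole of $G_-$; as all such $p$ lie in $\mb{C}_{>0}$, these poles lie in $\mb{C}_{<0}$. Hence $H$ is analytic on $\mb{C}_{\geq0}$, and $H(s)=O(|s|^{-2})$ as $|s|\to\infty$ because both factors are strictly proper. Integrating $H$ over the closed contour made up of the segment $[-\ie R,\ie R]$ of the imaginary axis and the right-half-plane arc $\{s\in\mb{C};~|s|=R,~\Re(s)\geq0\}$: this contour encloses no pole of $H$, so the loop integral vanishes by the residue theorem, while the arc contributes $O(R^{-1})\to0$ as $R\to\infty$. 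Therefore $\int_{-\ie\infty}^{\ie\infty}H(s)\,ds=0$, the cross term is $0$, and $\|G_-+G_+\|_2^2=\|G_-\|_2^2+\|G_+\|_2^2$ follows.

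The only delicate points are the two soft facts above --- that $RL_2$-membership forces strict properness and locates the poles as claimed (both immediate from \autoref{d_rloo} together with the analyticity hypotheses) and the standard length-times-bound estimate for the arc. A more conceptual but technically heavier route would use Paley--Wiener: $G_+$ is the Laplace transform of an $L_2$-function supported on $[0,\infty)$ and $G_-$ that of one supported on $(-\infty,0]$, so Plancherel's theorem identifies the frequency-domain inner product with the time-domain inner product of two functions whose supports meet in a null set, which is $0$. I would nonetheless present the contour version, as it stays within the rational and complex-analytic toolbox already used in the paper.
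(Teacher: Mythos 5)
Your argument is correct. Note, however, that the paper does not prove this statement at all: it is imported verbatim from \cite[Section 2.3]{Francis1987}, where the standard proof runs through Paley--Wiener/Plancherel --- $RH_2$ corresponds to time signals supported on $[0,\infty)$ and the conjugate-analytic part to signals supported on $(-\infty,0]$, so the cross term is the $L_2$ inner product of functions with essentially disjoint supports. That is exactly the ``heavier route'' you sketch at the end. What you actually carry out is a different, self-contained proof by residue calculus: reduce to the vanishing of $\int_{-\infty}^{\infty}\trace\bigl(G_-(\ie\omega)^*G_+(\ie\omega)\bigr)\,d\omega$, use realness of the coefficients to write $G_-(\ie\omega)^*=G_-(-\ie\omega)^{\top}$, observe that $H(s)=\trace\bigl(G_-(-s)^{\top}G_+(s)\bigr)$ is analytic on $\mb{C}_{\geq0}$ with $O(|s|^{-2})$ decay (both factors being strictly proper, which is correctly forced by $RL_2$-membership), and close the contour in the right half-plane. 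All the supporting facts check out: the pole locations follow from the analyticity hypotheses plus the absence of imaginary-axis poles, the strict properness from square-integrability of a rational function on $\ie\mb{R}$, and the arc estimate from length $\pi R$ times the $O(R^{-2})$ bound. The trade-off is the usual one: your contour proof stays entirely within the rational/complex-analytic toolbox the paper already uses and needs no Fourier theory, while the Paley--Wiener proof in Francis is the one that extends beyond rational functions to the full Hardy-space statement (which the paper's subsequent remark about $H_2$-optimality of $\Fkt{S_+}$ actually relies on).
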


The following known result characterizes these spaces in terms of realizations of transfer functions.

\begin{thm}[{\cite[Section 2.3]{Francis1987}}]\label{thm:norm} 
Let $G\in\rat{p}{m}$ with $\ie\mb{R}\subseteq\dom(G)$. Then the following hold:

\vspace{-0.2cm}
\hspace{-2.8cm}
\begin{minipage}{\textwidth}\begin{EQ}[rrll]
&\text{(a)}~&G\in RL_\infty & \Leftrightarrow \exists\, (I,A,B,C,D)\in\rez{G},\\
&\text{(b)}~&G\in RH_\infty & \Leftrightarrow \exists\, (I,A,B,C,D)\in\rez{G}\cap\sys{n,p,m}^+\,,\\
&\text{(c)}~&G\in RL_2 & \Leftrightarrow \exists\, (I,A,B,C,0)\in\rez{G},\\
&\text{(d)}~&G\in RH_2 & \Leftrightarrow \exists\, (I,A,B,C,0)\in\rez{G}\cap\sys{n,p,m}^+\,.
\end{EQ}
\end{minipage}
\vspace{0.2cm}
\end{thm}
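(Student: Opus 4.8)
The plan is to realise each transfer function on the left-hand side by a convenient realisation obtained from \autoref{lem:pspip} (and, where stability enters, from the Kalman decomposition \autoref{thm:minaeq}), and to obtain the converse implications by elementary asymptotics of rational functions on $\ie\mb{R}$. Throughout I use that $G$ is identified with its (unique) rational representative, so $\dom(G)$ means $\mb{C}$ minus the poles, and $\ie\mb{R}\subseteq\dom(G)$ is the standing hypothesis.

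I would start with (a). For ``$\Leftarrow$'': given $(I,A,B,C,D)\in\rez{G}$, the map $\omega\mapsto G(\ie\omega)$ is continuous on $\mb{R}$ (no poles on $\ie\mb{R}$) and agrees with $C(\ie\omega I-A)^{-1}B+D$ off the finite set $\sigma(A)\cap\ie\mb{R}$; since $C(sI-A)^{-1}B\to 0$ as $|s|\to\infty$, we get $G(\ie\omega)\to D$, hence $\omega\mapsto\|G(\ie\omega)\|_2$ is bounded and $G\in RL_\infty$. For ``$\Rightarrow$'': apply \autoref{lem:pspip} to an arbitrary realisation of $G$, giving $G(s)=C_J(sI-J)^{-1}B_J+D-\sum_{i=0}^{\nu-1}s^iC_NN^iB_N$ on the cofinite set $\rho(J)$. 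Evaluating at $s=\ie\omega$ with $|\omega|$ large, the strictly proper term vanishes in the limit, so any nonzero coefficient $C_NN^iB_N$ with $i\geq 1$ would force $\|G(\ie\omega)\|_2\to\infty$, contradicting $\|G\|_\infty<\infty$; hence $(I_k,J,B_J,C_J,D-C_NB_N)\in\sys{k,p,m}$ is the required standard realisation. Part (c) is the same recipe: ``$\Leftarrow$'' a realisation with $D=0$ is strictly proper, so $\|G(\ie\omega)\|_F=O(|\omega|^{-1})$ and, $G$ having no pole on $\ie\mb{R}$, the integral defining $\|\cdot\|_2$ converges; ``$\Rightarrow$'' invoke \autoref{lem:pspip} again, where now $L_2$-summability on $\ie\mb{R}$ kills the positive-degree terms \emph{and} the constant $D-C_NB_N$ (else $\|G(\ie\omega)\|_F\not\to 0$), leaving $(I_k,J,B_J,C_J,0)$. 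This also shows $RL_2\subseteq RL_\infty$.

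For (b) and (d) the extra ingredient is analyticity on $\mb{C}_{>0}$. ``$\Leftarrow$'' is immediate: a realisation in $\sys{n,p,m}^+$ has $\sigma(A)\subseteq\mb{C}_{<0}$, so $s\mapsto C(sE-A)^{-1}B+D$ is analytic on $\mb{C}_{\geq 0}$, and combining with (a), resp.\ (c), gives $G\in RH_\infty$, resp.\ $RH_2$. For ``$\Rightarrow$'' of (b): $RH_\infty\subseteq RL_\infty$, so by (a) there is a standard realisation; passing to a minimal one $S_M=(I,A_{11},B_1,C_1,D)$ via \autoref{thm:minaeq}, minimality gives controllability and observability, hence by the Hautus rank criterion (see \cite{Zhou1996}) alternatives $(ii)$ and $(iii)$ of \autoref{lem:pol2} fail at every $\lambda$, so every $\lambda\in\sigma(A_{11})$ is a pole of $G$. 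As $G\in RH_\infty$ is analytic on $\mb{C}_{>0}$ and has no pole on $\ie\mb{R}$, it follows $\sigma(A_{11})\subseteq\mb{C}_{<0}$, i.e.\ $S_M\in\sys{n_1,p,m}^+$. Finally ``$\Rightarrow$'' of (d): $RH_2\subseteq RH_\infty$ (an $RL_2$-function is strictly proper, hence bounded on $\ie\mb{R}$), so (b) yields a stable standard realisation, and $RH_2\subseteq RL_2$ forces its feedthrough term to vanish exactly as in (c), giving $(I,A,B,C,0)\in\rez{G}\cap\sys{n,p,m}^+$.

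\textbf{Main obstacle.} The delicate step is ``$\Rightarrow$'' of (b): one must exclude eigenvalues of $A$ in $\mb{C}_{\geq 0}$ that are cancelled in $G$, i.e.\ one cannot work with an arbitrary realisation. The fix is to reduce to a minimal realisation and exploit that controllability and observability make the Hautus conditions in \autoref{lem:pol2} degenerate, so $\sigma(A_{11})$ consists only of genuine poles of $G$; everything else is routine estimation of rational functions on the imaginary axis.
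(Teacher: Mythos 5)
Your argument is correct, but it is worth noting that the paper itself does not prove this statement at all: it is quoted from \cite[Section 2.3]{Francis1987}, where these characterizations are established for proper real-rational matrices (there $RL_\infty$ is essentially \emph{defined} as the proper rational matrices without poles on $\ie\mb{R}$, and $RL_2$ as the strictly proper ones, the state-space translation being standard). What you do differently is give a self-contained derivation inside the paper's descriptor-system framework: for (a) and (c) you split $G$ via \autoref{lem:pspip} into a strictly proper part plus a matrix polynomial and use growth estimates on $\ie\mb{R}$ to kill the polynomial part (and, for $RL_2$, also the constant term); for (b) and (d) you pass to a minimal realization via \autoref{thm:minaeq} and use \autoref{lem:pol2} together with the Hautus test to conclude that every eigenvalue of the minimal $A$-matrix is a genuine pole, hence lies in $\mb{C}_{<0}$. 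This is sound, and correctly isolates the only delicate point (uncontrollable/unobservable eigenvalues in $\mb{C}_{\geq0}$ of a non-minimal realization). Two small caveats you should make explicit: the equivalence ``minimal $\Leftrightarrow$ controllable and observable'' and the PBH rank characterization are imported from \cite{Zhou1996} rather than proved in the paper (legitimate, but it is an external ingredient on the same footing as the citation the paper uses); and analyticity of $G$ on $\mb{C}_{>0}$ in the ``$\Leftarrow$'' directions of (b), (d) must be read modulo the identification of transfer functions agreeing on cofinite sets (removable singularities), which the remark following \autoref{d_rloo} supplies. Finally, the degenerate case $k=0$ in \autoref{lem:pspip} (constant $G$) needs a one-line padding argument to produce a realization of positive order. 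What your route buys is self-containedness and a proof that works directly at the level of descriptor realizations; what the paper's citation buys is brevity.
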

%
%


\section{Optimal stable approximations}\label{sec:appr}
\subsection{Problem statement}

In this paper, we deal with the following problem. 

\fbox{
\begin{minipage}{0.97\textwidth}
\begin{prob} Let $q\in\{2,\infty\}$. For $S\in\sys{n,p,m}^0$ find $\wt{S}\in\bigcup_{\widehat{n}\in\mb{N}}\sys{\widehat{n},p,m}^+$ such that \[\|\Fkt{S}-\Fkt{\wt{S}}\|_q=\infb{\|\Fkt{S}-\Fkt{\widehat{S}}\|_q}{\widehat{S}\in\bigcup_{\widehat{n}\in\mb{N}}\sys{\widehat{n},p,m}^+}\,.\]\label{p_p1} \label{p_p2}
\end{prob}\end{minipage}}

In the next subsections we show that this problem is solvable. We also present an explicit solution in \autoref{thm:l2min} for $q=2$ and \autoref{thm:loomin} for $q=\infty$.

We do not consider systems $S=(E,A,B,C,D)\in\sys{n,p,m}$ where $\penc{E,A}$ has imaginary eigenvalues. If there is no imaginary pole of $\Fkt{S}$, then there exists a realization $S_2\in\sys{n,p,m}^0$ of $\Fkt{S}$ and we solve \hyperref[p_p1]{(AP$_q$)} for $S_2$. Otherwise, for every $\widehat{S}\in\bigcup_{\widehat{n}\in\mb{N}}\sys{\widehat{n},p,m}^+$ we have $\|\Fkt{S}-\Fkt{\widehat{S}}\|_q=\infty$ and hence \hyperref[p_p1]{(AP$_q$)} is not solvable.

First, we show that solving \hyperref[p_p1]{(AP$_q$)} for a descriptor system we can equivalently solve \hyperref[p_p1]{(AP$_q$)} for its antistable part. This was e.g.\ proposed in \cite[Section 8.3]{Francis1987} but only for standard systems and $q=\infty$. We need \autoref{thm:stabinst} to use some results in \cite{Glover1984} and \cite{Safonov1990} which are only applicable for antistable standard systems. 

\begin{thm}\label{thm:stabinst}
Let $S\in\sys{n,p,m}^0$. Then there exist $S_+=(E_+,A_+,B_+,C_+,D)\in\sys{n_+,p,m}^+$ and $S_-=(E_-,A_-,B_-,C_-,0)\in\sys{n_-,p,m}^-$ such that $S\sim S_+\oplus S_-$. Let $q\in\{2,\infty\}$ and $\gamma\geq0$. Then the following two are equivalent:
\begin{flalign*}
(i)~&\exists\, \wt{S}\in\bigcup_{\widehat{n}\in\mb{N}}\sys{\widehat{n},p,m}^+\colon\|\Fkt{S}-\Fkt{\wt{S}}\|_q\leq\gamma\,,&
(ii)~&\exists\, \wt{S}\in\bigcup_{\widehat{n}\in\mb{N}}\sys{\widehat{n},p,m}^+\colon\|\Fkt{S_-}-\Fkt{\wt{S}}\|_q\leq\gamma\,.
\end{flalign*}
If $\wt{S}$ satisfies $(ii)$, then $\wt{S}\oplus S_+$ satisfies $(i)$.
\end{thm}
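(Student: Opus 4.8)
The plan is to first produce the stable/antistable splitting and then reduce the approximation problem across the direct sum. For the splitting, I would apply \autoref{thm:decsyl} to $\penc{E,A}$ with $M_1 = \sigma(E,A)\cap\mb{C}_{<0}$ and $M_2 = (\sigma(E,A)\cap\mb{C}_{>0})\cup\{\infty\}$; these are disjoint (no imaginary eigenvalues, since $S\in\sys{n,p,m}^0$) and their union is $\sigma(E,A)$. This gives orthogonal $P,Q$ (or rather the $P,Q$ from the theorem, which I may further left/right multiply by block-diagonal regulars as in the proof of \autoref{lem:pspip} to normalize the $E$-block of the antistable part to $I$) so that $\rser{P}{S}{Q} = S_+ \oplus S_-$ with $\sigma(E_+,A_+) = M_1 \subseteq \mb{C}_{<0}$, hence $\mb{C}_{\geq 0}\subseteq\rho(E_+,A_+)$ so $S_+\in\sys{n_+,p,m}^+$, and $\sigma(E_-,A_-)=M_2$. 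Since $M_2$ contains $\infty$ unless the $E$-block is already regular, I need to absorb the $\infty$-eigenvalues: the polynomial part is put into the feedthrough term, so I would route the $\infty$-block of the descriptor system into $D$ (as in \eqref{eq:ratpoly}), leaving a purely proper antistable part with $\mb{C}_{\leq 0}\subseteq\rho(E_-,A_-)$ and $E_-$ regular, and with $D$-part $0$ — i.e. $S_-\in\sys{n_-,p,m}^-$. By \autoref{rem:transop}, $\Fkt{S} = \Fkt{S_+} + \Fkt{S_-}$, and $S\sim S_+\oplus S_-$.

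Next comes the equivalence. The key algebraic fact is the orthogonality/decomposition: if $\wt{S}\in\sys{\widehat{n},p,m}^+$ then $\Fkt{\wt{S}} - \Fkt{S_+}\in RH_q$ when $q=2$ (note $\Fkt{S_+}\in RH_2$ because $S_+$ is stable with feedthrough term $D$; strictly $\Fkt{S_+}-D\in RH_2$, but I can arrange $D$ to sit with the stable part) and $\Fkt{S_+} - \Fkt{\wt{S}}$ is analytic on $\mb{C}_{\geq 0}$ in the $q=\infty$ case, while $\Fkt{S_-}$ is analytic on $\mb{C}_{<0}$. For $q=2$: write $\Fkt{S} - \Fkt{\wt{S}} = \Fkt{S_-} + (\Fkt{S_+} - \Fkt{\wt{S}})$, apply \autoref{thm:orth} with $G_- = \Fkt{S_-}$ and $G_+ = \Fkt{S_+} - \Fkt{\wt{S}}$ to get $\|\Fkt{S}-\Fkt{\wt{S}}\|_2^2 = \|\Fkt{S_-}\|_2^2 \cdots$ — wait, more precisely $\|\Fkt{S}-\Fkt{\wt{S}}\|_2^2 = \|\Fkt{S_-} - (\Fkt{\wt{S}} - \Fkt{S_+})\|_2^2$ and since $\Fkt{\wt{S}}-\Fkt{S_+}\in RH_2$ while $\Fkt{S_-}$ is in $RL_2$ analytic on $\mb{C}_{<0}$, \autoref{thm:orth} gives $= \|\Fkt{S_-}\|_2^2$ only in the trivial case; the useful statement is that among all $\wt{S}\in\sys{\cdot}^+$, minimizing $\|\Fkt{S}-\Fkt{\wt{S}}\|_2$ is equivalent to minimizing $\|\Fkt{S_-} - \Fkt{\wt{S}'}\|_2$ where $\Fkt{\wt{S}'} = \Fkt{\wt{S}} - \Fkt{S_+}$ ranges over the same set $RH_2$, and the map $\wt{S}\mapsto \wt{S}\oplus(-S_+)$ bijects the feasible sets. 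So for $(i)\Rightarrow(ii)$ take $\wt{S}' := \wt{S}\oplus S_+^{(-)}$ where $S_+^{(-)} := (E_+,A_+,-B_+,C_+,-D)$ realizes $-\Fkt{S_+}$ and lies in $\sys{\cdot}^+$; then $\|\Fkt{S_-}-\Fkt{\wt{S}'}\|_q = \|\Fkt{S_-} - \Fkt{\wt{S}} + \Fkt{S_+}\|_q = \|\Fkt{S} - \Fkt{\wt{S}}\|_q \leq \gamma$ using $\Fkt{S}=\Fkt{S_+}+\Fkt{S_-}$. For $(ii)\Rightarrow(i)$ symmetrically take $\wt{S}\oplus S_+$, which is stable (direct sum of stable systems) and satisfies $\|\Fkt{S}-\Fkt{\wt{S}\oplus S_+}\|_q = \|\Fkt{S_-}-\Fkt{\wt{S}}\|_q\leq\gamma$; this also proves the last sentence of the theorem.

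The one subtlety worth checking is that $S_+\oplus S_- \in \sys{\cdot}$, i.e. the combined pencil is regular with no imaginary eigenvalues — immediate since $\sigma(E_+\oplus E_-, A_+\oplus A_-) = \sigma(E_+,A_+)\cup\sigma(E_-,A_-) = \sigma(E,A)$ — and that the $q=\infty$ orthogonality-style argument needs no norm identity at all: it is purely $\Fkt{S}-\Fkt{\wt{S}} = \Fkt{S_-}-(\Fkt{\wt{S}}-\Fkt{S_+})$ together with the bijection of feasible sets, so the same two displayed substitutions handle both $q=2$ and $q=\infty$ uniformly. The main obstacle is not any single hard estimate but rather the bookkeeping of the $\infty$-eigenvalue block and the feedthrough term $D$: one must be careful that after pushing the polynomial part into $D$, the antistable realization $S_-$ genuinely has feedthrough $0$ and $E_-$ regular (so $S_-\in\sys{n_-,p,m}^-$ per the definition), and that the $D$ attached to $S_+$ causes no trouble — indeed $D$ is a constant, hence in $RH_\infty$ and its contribution cancels in all the differences above, and for $q=2$ one restricts attention to the case $D=0$ as is standard (or notes $\Fkt{S}\in RL_2$ forces the polynomial-plus-constant part to vanish), so the reduction goes through cleanly.
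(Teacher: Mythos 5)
Your reduction of $(i)\Leftrightarrow(ii)$ is exactly the paper's argument: from $\Fkt{S}=\Fkt{S_+}+\Fkt{S_-}$ one has the identity $\|\Fkt{S}-\Fkt{\wt{S}}\|_q=\|\Fkt{S_-}-(\Fkt{\wt{S}}-\Fkt{S_+})\|_q$, and the two implications are realized by $\wt{S}\oplus(E_+,A_+,-B_+,C_+,-D)$ and $\wt{S}\oplus S_+$ respectively. You are also right that no orthogonality (\autoref{thm:orth}) is needed for this theorem and that the argument is uniform in $q$.

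The gap is in the decomposition. You place $\infty$ in $M_2$ (the antistable side) and then claim to ``route the $\infty$-block into $D$.'' By \eqref{eq:ratpoly} the contribution of the infinite eigenvalues to the transfer function is the polynomial $-\sum_{i=0}^{\nu-1}s^iC_NN^iB_N$, which is a constant only when the index is one; since $S\in\sys{n,p,m}^0$ imposes no bound on the index, this part cannot in general be absorbed into a feedthrough matrix, and your construction then fails to produce systems of the stated form. The intended (and easier) split goes the other way: take $M_1=\sigma(E,A)\setminus\mb{C}_{>0}$, i.e.\ keep $\infty$ together with the stable finite eigenvalues. Then $\sigma(E_+,A_+)\subseteq\mb{C}_{<0}\cup\{\infty\}$, so $\mb{C}_{\geq0}\subseteq\rho(E_+,A_+)$ and $S_+\in\sys{n_+,p,m}^+$ even though $E_+$ may be singular --- the definition of $\sys{n,p,m}^+$ permits this, which is precisely why the improper part belongs with $S_+$ --- while $\sigma(E_-,A_-)\subseteq\mb{C}_{>0}$ forces $E_-$ to be regular (a regular pencil with singular $E_-$ would have $\infty$ as an eigenvalue), so $S_-\in\sys{n_-,p,m}^-$ with feedthrough $0$ once $D$ is attached to the plus-block. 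Your side remark that for $q=2$ one should ``restrict to $D=0$'' is likewise unnecessary: the norm identity above holds verbatim whatever $D$ and the polynomial part are, since the same function appears on both sides.
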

\begin{proof}
\autoref{thm:decsyl} assures the existence of such systems $S_+$ and $S_-$.
Let $\wt{S}\in\sys{\wt{n},p,m}^+$ satisfy $(i)$. Then we have
\[\gamma\geq\|\Fkt{S}-\Fkt{\wt{S}}\|_q=\|\Fkt{S_-}-(\Fkt{\wt{S}}-\Fkt{S_+})\|_q\,.\]

Thus $\widehat{S}:=\wt{S}\oplus(E_+,A_+,B_+,-C_+,-D)$ satisfies $(ii)$ (compare \autoref{rem:transop}). The fact that $\widehat{S}\in\bigcup_{\widehat{n}\in\mb{N}}\sys{\widehat{n},p,m}^+$ follows easily from the definition of $\oplus$. Let $\wt{S}\in\sys{\wt{n},p,m}^+$ fulfils $(ii)$. Then analogously $\wt{S}\oplus S_+$ satisfies $(i)$. 
\end{proof}

\subsection{Optimal \texorpdfstring{{$\boldsymbol{RH_2}$}}{RH{\tiny{2}}}-approximations}


\begin{thm}\label{thm:l2min}
Let $S\in\sys{n,p,m}^0$. We use the notations of \autoref{thm:stabinst}. Then $S_+$ solves \hyperref[p_p1]{(AP$_2$)}. More precisely, we have
\[\infb{\|\Fkt{S}-\Fkt{\widehat{S}}\|_2}{\widehat{S}\in\bigcup_{\widehat{n}\in\mb{N}}\sys{\widehat{n},p,m}^+}=\|\Fkt{S}-\Fkt{S_+}\|_2=\|\Fkt{S_-}\|_2\,.\]
The solution is unique in the following sense: If $S_2$ is another solution, then it is also a realization of $\Fkt{S_+}$.
\end{thm}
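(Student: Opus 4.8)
The plan is to reduce the general descriptor-system problem to a statement purely about transfer functions and their decomposition into analytic parts, then invoke the orthogonality result (\autoref{thm:orth}) to pin down both the value of the infimum and the uniqueness claim. The key observation is that $\Fkt{S}=\Fkt{S_+}+\Fkt{S_-}$ with $\Fkt{S_+}\in RH_2$ (since $S_+\in\sys{n_+,p,m}^+$ and $D_+$ is the original $D$, but crucially the $D$-part must vanish for $RH_2$-membership — here it does because $S_-$ carries feedthrough $0$ and one checks $S_+$ can be taken with $D=0$ as well once we know $\Fkt{S}\in RL_2$ forces $D=0$; more carefully, $\Fkt{S}-\Fkt{S_+}=\Fkt{S_-}$ and $\Fkt{S_-}$ is a transfer function of an antistable system with $D=0$, hence analytic on $\mb{C}_{<0}$ and in $RL_2$) and $\Fkt{S_-}$ analytic on $\mb{C}_{<0}$.

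\textbf{Step 1 (Reduction to $S_-$).} First I would apply \autoref{thm:stabinst} with $q=2$: for any $\widehat S\in\bigcup_{\widehat n}\sys{\widehat n,p,m}^+$, there is a corresponding stable system achieving the same or better approximation error against $\Fkt{S_-}$, and conversely. This shows the two infima (approximating $\Fkt{S}$ and approximating $\Fkt{S_-}$ over stable systems) coincide, and that $\widehat S=\text{(something)}\oplus S_+$ transfers solutions. In particular it suffices to show that $\widehat S=0$ (the trivial system, transfer function identically zero) is the optimal stable approximation of $\Fkt{S_-}$, with optimal value $\|\Fkt{S_-}\|_2$, and that this minimizer is unique.

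\textbf{Step 2 (Orthogonality gives the bound).} For any stable $\widehat S$, \autoref{thm:norm}(d) combined with the hypothesis tells us $\Fkt{\widehat S}\in RH_2$ only if $\widehat S$ has feedthrough $0$; but we must be a little careful since an arbitrary stable $\widehat S$ might have $D\neq0$, in which case $\Fkt{\widehat S}\notin RL_2$ and $\|\Fkt{S_-}-\Fkt{\widehat S}\|_2=\infty$, so such $\widehat S$ is irrelevant to the infimum. So assume $\Fkt{\widehat S}\in RH_2$. Now $\Fkt{S_-}$ is analytic on $\mb{C}_{<0}$ (the poles of $\Fkt{S_-}$ lie in $\mb{C}_{>0}$ since $S_-\in\sys{n_-,p,m}^-$; one uses \autoref{lem:pol2} to see poles are among the eigenvalues of $(E_-,A_-)$) and lies in $RL_2$, so $G_-:=\Fkt{S_-}$ and $G_+:=-\Fkt{\widehat S}$ satisfy the hypotheses of \autoref{thm:orth}, yielding
\[
\|\Fkt{S_-}-\Fkt{\widehat S}\|_2^2=\|\Fkt{S_-}\|_2^2+\|\Fkt{\widehat S}\|_2^2\geq\|\Fkt{S_-}\|_2^2,
\]
with equality iff $\|\Fkt{\widehat S}\|_2=0$, i.e.\ $\Fkt{\widehat S}\equiv0$. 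Taking $\widehat S$ with all matrices zero (or, back in the original problem, $\widehat S=S_+$) attains the value $\|\Fkt{S_-}\|_2$, so the infimum equals $\|\Fkt{S_-}\|_2$ and, by \autoref{thm:stabinst} again, also equals $\|\Fkt{S}-\Fkt{S_+}\|_2$.

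\textbf{Step 3 (Uniqueness).} From the equality case above, any minimizer $S_2$ over stable systems (for the $\Fkt{S_-}$ problem) must have $\Fkt{S_2}\equiv0$; transporting back via \autoref{thm:stabinst}, a solution $S_2$ of \hyperref[p_p1]{(AP$_2$)} for the original $S$ satisfies $\Fkt{S_2}-\Fkt{S_+}\equiv 0$ on the common domain, i.e.\ $\Fkt{S_2}=\Fkt{S_+}$, so $S_2$ is a realization of $\Fkt{S_+}$ as claimed. \textbf{The main obstacle} is the bookkeeping around the feedthrough term $D$ and the precise meaning of ``$\Fkt{S_-}$ analytic on $\mb{C}_{<0}$'': one must justify, using \autoref{thm:decsyl}/\autoref{thm:stabinst} that $S_-$ genuinely has feedthrough $0$ and spectrum in $\mb{C}_{>0}$ (so $\Fkt{S_-}\in RL_2$ and analytic on $\mb{C}_{\le 0}$), and one must handle non-$RL_2$ competitors by noting they give infinite error and can be discarded — once these points are cleanly stated, the rest is just \autoref{thm:orth}.
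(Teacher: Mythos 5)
Your proof is correct and uses essentially the same argument as the paper: the stable/antistable splitting $\Fkt{S}=\Fkt{S_+}+\Fkt{S_-}$ together with the orthogonality result \autoref{thm:orth} gives the lower bound $\|\Fkt{S_-}\|_2$, the equality case gives uniqueness, and competitors with $\Fkt{S}-\Fkt{\widehat{S}}\notin RL_2$ are discarded as irrelevant. The only (cosmetic) difference is that you first pass through \autoref{thm:stabinst} to reduce to approximating $\Fkt{S_-}$ by the zero function, whereas the paper applies \autoref{thm:orth} directly to $\Fkt{S}-\Fkt{\widehat{S}}=\Fkt{S_-}+(\Fkt{S_+}-\Fkt{\widehat{S}})$, which also sidesteps your worries about the feedthrough term, since only $\Fkt{S_+}-\Fkt{\widehat{S}}\in RH_2$ is needed, not $\Fkt{S_+}\in RH_2$.
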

\begin{proof}
Let $\widehat{S}\in\bigcup_{\widehat{n}\in\mb{N}}\sys{\widehat{n},p,m}^+$ with $\Fkt{S}-\Fkt{\widehat{S}}\in RL_2$. Since $\Fkt{S_-}\in RL_2$ (\autoref{thm:norm}) we have $\Fkt{S_+}-\Fkt{\widehat{S}}\in RH_2$. By \autoref{thm:orth} we obtain
\[\|\Fkt{S}-\Fkt{\widehat{S}}\|_2^2=\|\Fkt{S_-}+\Fkt{S_+}-\Fkt{\widehat{S}}\|_2^2=\|\Fkt{S_-}\|_2^2+\|\Fkt{S_+}-\Fkt{\widehat{S}}\|_2^2\geq\|\Fkt{S_-}\|_2^2\,.\]
The lower bound is attained, if and only if $\widehat{S}\in\rez{\Fkt{S_+}}$, e.g.\ if $\widehat{S}=S_+$.
\end{proof}

\begin{rem}
In the proof of \autoref{thm:l2min} it can be seen that the function $\Fkt{S_+}$ is even the best approximation in the Hardy space $H_2$. The reason is that $\Fkt{S_-}$ is also orthogonal to $H_2$ (see the more general statement of \autoref{thm:orth} in \cite[Section 2.3]{Francis1987}).
\end{rem}

\subsection{Optimal \texorpdfstring{{$\boldsymbol{RH_\infty}$}}{RH{\tiny{oo}}}-approximations with balanced realization}


For a minimal antistable standard system $S$ the problem \hyperref[p_p2]{(AP$_\infty$)} was already solved in \cite{Glover1984}. Combining {\cite[Theorem 6.1]{Glover1984}} and its proof based on \cite[Theorem 6.3]{Glover1984}, we obtain the following result. We reformulate the statements in terms of realizations instead of transfer functions. Note also the interchange of the stable and antistable system which is easy to show.

\begin{thm}[{\cite{Glover1984}}]\label{thm:enehari}\label{thm:optb}
Let $S\in\sys{n,p,m}^-$ be a minimal realization such that
\begin{align*}
S&=\left(\bpm I&0\\0&I_r\epm,\bpm A_{11}&A_{12}\\A_{21}&A_{22}\epm,\bpm B_1\\B_2\epm,\bpm C_1&C_2\epm,D\right),\\
 \Cont{S}&=\bpm \Sigma_1&0\\0&-\sigma_1 I_r\epm,\quad\Obs{S}=\bpm \Sigma_2&0\\0&-\sigma_1 I_r\epm,\quad\sigma_1^2\notin\sigma(\Sigma_1\Sigma_2)\,,
\end{align*}
where $\sigma_1:=\sqrt{\max\sigma(\Cont{S}\Obs{S})}$ (compare \autoref{thm:bal}). 
Then there exists $U\in\mb{R}^{p\times m}$ with $B_2=-C_2^{\top}U$. Set 
\begin{EQ}[rlcrl]
\Gamma&:=\Sigma_1\Sigma_2-\sigma_1^2I\,,\\
\wt{A}&:=\Gamma^{-1}(\sigma_1^2A_{11}^{\top}+\Sigma_2A_{11}\Sigma_1+\sigma_1C_1^{\top}UB_1^{\top})\,,&~~&\wt{C}&:=C_1\Sigma_1-\sigma_1UB_1^{\top},\\
\wt{B}&:=\Gamma^{-1}(\Sigma_2B_1-\sigma_1C_1^{\top}U)\,,&~~&\wt{D}&:=D+\sigma_1U\,.
\end{EQ}
Then $\wt{S}:=(I,\wt{A},\wt{B},\wt{C},\wt{D})\in\sys{n-r,p,m}^+$ holds and 
\[\infb{\|\Fkt{S}-\Fkt{\widehat{S}}\|_\infty}{\widehat{S}\in\bigcup_{\widehat{n}\in\mb{N}}\sys{\widehat{n},p,m}^+}=\|\Fkt{S}-\Fkt{\wt{S}}\|_\infty=\sigma_1\,.\]
\end{thm}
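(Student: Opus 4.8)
The plan is to derive Theorem~\ref{thm:enehari} as a reformulation of the known Adamjan--Arov--Krein / Glover theory for Hankel-norm approximation, with the stable and antistable roles interchanged. I would start by recalling that for a minimal antistable realization $S\in\sys{n,p,m}^-$ with transfer function $G=\Fkt{S}$, the mirror-image system $S^\flat:=(I,-A,B,C,D)$ is a minimal \emph{stable} realization whose transfer function $G^\flat(s)=G(-s)$ has the same $\sigma_1$ as its largest Hankel singular value, because the Gramians satisfy $\Cont{S^\flat}=-\Cont{S}$ and $\Obs{S^\flat}=-\Obs{S}$ (sign flip in the Lyapunov equations \eqref{eq:lyap} when $A\mapsto-A$, $E=I$), so $\sigma(\Cont{S^\flat}\Obs{S^\flat})=\sigma(\Cont{S}\Obs{S})$. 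Thus the hypothesis that $S$ is balanced in the sense of \autoref{thm:bal} with blocks $(\Sigma_1,-\sigma_1 I_r)$ and $(\Sigma_2,-\sigma_1 I_r)$ translates exactly to $S^\flat$ being in the (possibly non-diagonal) balanced form used in \cite[Theorem 6.1]{Glover1984}, with $\sigma_1$ of multiplicity $r$.

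Next I would invoke \cite[Theorem 6.1]{Glover1984} for $G^\flat$: the optimal anti-stable approximant $\wt G^\flat$ of the stable function $G^\flat$ satisfies $\|G^\flat-\wt G^\flat\|_\infty=\sigma_1$, and its explicit realization is the one written out in the proof of that theorem (based on \cite[Theorem 6.3]{Glover1984}). Mirroring back, $\wt G(s):=\wt G^\flat(-s)$ is then a \emph{stable} function with $\|G-\wt G\|_\infty=\|G^\flat-\wt G^\flat\|_\infty=\sigma_1$, since $\|F\|_\infty$ is invariant under $F(s)\mapsto F(-s)$ (it only relabels $\ie\omega\mapsto-\ie\omega$ on the imaginary axis). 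The existence of $U\in\mb{R}^{p\times m}$ with $B_2=-C_2^\top U$ comes from the structure forced by the two Lyapunov equations restricted to the $\sigma_1$-block: writing out the $(2,2)$-blocks of \eqref{eq:lyap} for the normalized $E=I$ case gives $-\sigma_1(A_{22}+A_{22}^\top)+B_2B_2^\top=0$ and $-\sigma_1(A_{22}+A_{22}^\top)+C_2^\top C_2=0$, hence $B_2B_2^\top=C_2^\top C_2$, and a standard argument (as in \cite{Glover1984}) produces an all-pass-type $U$, which one checks is forced to satisfy $U U^\top = I_p$ or $U^\top U=I_m$ as needed; I would cite Glover for this rather than redo it.

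The remaining work is purely bookkeeping: take the formulas for $\wt A^\flat,\wt B^\flat,\wt C^\flat,\wt D^\flat$ from the proof of \cite[Theorem 6.1]{Glover1984} applied to the data $(-A_{11},-A_{12},-A_{21},-A_{22},B_1,B_2,C_1,C_2,D)$ of $S^\flat$, then apply the mirror map $\wt A\mapsto-\wt A$ (and leave $\wt B,\wt C,\wt D$ essentially as is, tracking signs) to land on the stated $\wt A,\wt B,\wt C,\wt D$. One also has to verify $\wt S=(I,\wt A,\wt B,\wt C,\wt D)\in\sys{n-r,p,m}^+$, i.e.\ that $\Gamma=\Sigma_1\Sigma_2-\sigma_1^2 I$ is invertible and $\wt A$ has spectrum in $\mb{C}_{\le 0}$; invertibility of $\Gamma$ is exactly the hypothesis $\sigma_1^2\notin\sigma(\Sigma_1\Sigma_2)$, and stability of $\wt A$ transfers from the anti-stability of $\wt A^\flat$ guaranteed by \cite{Glover1984}. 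Finally the lower bound $\inf_{\widehat S}\|G-\Fkt{\widehat S}\|_\infty\ge\sigma_1$ follows from the Nehari/AAK lower bound: the distance from $G^\flat$ in $RH_\infty$ (stable) to the antistable functions equals the Hankel norm $\sigma_1$, and mirroring gives the same bound here; combined with the attained value $\sigma_1$ from $\wt S$ this proves the displayed equalities.

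The main obstacle I anticipate is not conceptual but \emph{sign/coordinate discipline}: carefully matching Glover's conventions (which sign of the Gramians, whether his ``antistable'' means $\mb{C}_{\ge 0}\subseteq\rho$ with $E=I$, how the $r$-fold repeated singular value is peeled off, and the precise form of the all-pass dilation in \cite[Theorem 6.3]{Glover1984}) to the conventions of this paper, so that the mirror transformation $s\mapsto-s$ lands exactly on the stated closed-form $\wt A,\wt B,\wt C,\wt D$ with the correct signs in each of the four formulas. I would do this by writing $S^\flat$ explicitly, quoting Glover's formulas verbatim, substituting, and then simplifying; any discrepancy would indicate a missed sign in one of $\wt A,\wt B,\wt C,\wt D$, which a direct substitution check into the interpolation/all-pass identity $\bigl(G-\wt G\bigr)\bigl(G-\wt G\bigr)^{\sim}=\sigma_1^2 I$ on $\ie\mb{R}$ would catch.
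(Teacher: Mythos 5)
Your proposal matches the paper's treatment: the paper offers no proof of this theorem, presenting it as a direct citation of \cite[Theorem 6.1]{Glover1984} (and its proof via \cite[Theorem 6.3]{Glover1984}) together with the remark that the interchange of the stable and antistable roles is ``easy to show'' --- which is exactly the mirror argument you outline, down to the sign flip of the Gramians and the derivation of $B_2B_2^{\top}=C_2^{\top}C_2$ from the $(2,2)$-blocks of the Lyapunov equations (the latter reappears in the paper as Lemma~\ref{lem:sing}(a)). One bookkeeping correction: the mirror system $(I,-A,B,C,D)$ has transfer function $-G(-s)+2D$ rather than $G(-s)$, since $(sI+A)^{-1}=-(-sI-A)^{-1}$, so the correct mirror is $(I,-A,-B,C,D)$ or $(I,-A,B,-C,D)$; this does not harm the argument (the $\infty$-norm of the error is unaffected by $F\mapsto -F$ and by $s\mapsto -s$), but it is exactly the kind of sign slip your final substitution check would need to catch.
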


\begin{rem}
The value $\sigma_1$ is the greatest \indx{hankel singular value} of $\Fkt{S}$ (see \cite[Section 5.1]{Francis1987} for further details). As stated in \cite[Theorem 6.1]{Glover1984}, the function $\Fkt{\wt{S}}$ is even the best approximation in the more general Hardy space $H_\infty$. 
\end{rem}

Because of \autoref{thm:stabinst} we are now able to solve \hyperref[p_p2]{(AP$_\infty$)}, if $S$ is a descriptor system. The disadvantage of algorithms based on \autoref{thm:optb} is that we need a balanced minimal realization of $\Fkt{S_-}$ to compute an optimal solution. 

\subsection{Optimal \texorpdfstring{{$\boldsymbol{RH_\infty}$}}{RH{\tiny{oo}}}-approximations without balanced realization}

Some results of \cite{Glover1984}, concerning optimal hankel norm approximation, were generalized in \cite{Safonov1990} to nonminimal stable standard systems. We use \cite[Theorem 1]{Safonov1990} and its proof to solve \hyperref[p_p2]{(AP$_\infty$)} for general $S\in\sys{n,p,m}^-$. Among some modifications, one of our main tasks is to prove that the optimal solution lies indeed in $\sys{n,p,m}^+$, since \cite{Safonov1990} only considers the poles of the resulting transfer function.

First, we need to compute the greatest hankel singular value $\sigma_1$ of $\Fkt{S}$ with the formula given in \autoref{thm:enehari} which is restricted to minimal standard systems. However, we avoid the determination of a minimal realization. To the author's knowledge, it was never shown that the nonzero hankel singular values of $\Fkt{S}$ equal the roots of the nonzero eigenvalues of $\Cont{S}\Obs{S}$ independently of the realization $S$ of $\Fkt{S}$. This equality was only proved for minimal realizations (cf.\ e.g.\ \cite[Section 5.1, Theorem 3]{Francis1987}).

\begin{lem}\label{lem:hank}
Let $S=(I,A,B,C,D)\in\sys{n,p,m}^-$ be a minimal realization of $G\in \rat{p}{m}$ and let $\wt{S}=(\wt{E},\wt{A},\wt{B},\wt{C},\wt{D})\in\sys{\wt{n},p,m}^-$ be a realization of $G$. Then
\[\sigma\bigl(\Cont{S}\Obs{S}\bigr)\backslash\left\{0\right\}=\sigma\bigl(\Cont{\wt{S}}\wt{E}^{\top}\Obs{\wt{S}}\wt{E}\bigr)\backslash\left\{0\right\}.\]
In particular, the equation $\sigma_1^2=\max\sigma\bigl(\Cont{\wt{S}}\wt{E}^{\top}\Obs{\wt{S}}\wt{E}\bigr)$ holds for all $\wt{S}\in\sys{\wt{n},p,m}^-\cap\rez{G}$.
\end{lem}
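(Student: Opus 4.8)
The plan is to reduce the claim for an arbitrary antistable realization $\wt{S}$ to the minimal case by using the Kalman decomposition and the transformation behaviour of the Gramians, both of which are available in the excerpt. First I would pick, via \autoref{thm:uetrafmin2}, a minimal realization $S_M=(I,A_{11},B_1,C_1,D)$ of $G$; since $\wt{S}$ and $S_M$ both realize $G$ and are antistable, $S_M$ is r.s.e.\ to the ``minimal block'' of any realization, so it suffices to compare the Gramian products of $S_M$ and of $\wt{S}$, and then separately $S$ and $S_M$. Because all minimal realizations are r.s.e.\ (\autoref{thm:minaeq}) and, by \autoref{lem:XYaeq}, $\rse{P}{\cdot}{Q}$ transforms $\Cont{\cdot}$ by $Q(\cdot)Q^\top$ and $\Obs{\cdot}$ by $P^\top(\cdot)P$, the product $\Cont{}\Obs{}$ is transformed by a similarity $Q^{-\top}(\cdot)Q^\top$ (using $P=Q^{-1}$ for standard systems), hence has realization-independent spectrum among minimal standard realizations; this handles $\sigma(\Cont{S}\Obs{S})=\sigma(\Cont{S_M}\Obs{S_M})$.

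The substantive step is to connect $\wt{S}$ to $S_M$. Here I would first reduce $\wt{S}$ to a standard antistable realization: since $\wt{E}$ is regular (antistable systems have $E$ regular by definition), $\wt{S}$ is r.s.e.\ to $(I,\wt{E}^{-1}\wt{A},\wt{E}^{-1}\wt{B},\wt{C},\wt{D})$ via $P=\wt{E}^{-1}$, $Q=I$; call it $\wt{S}'$. By \autoref{lem:XYaeq} one checks that $\Cont{\wt{S}'}=\Cont{\wt{S}}$ and $\Obs{\wt{S}'}=\wt{E}^\top\Obs{\wt{S}}\wt{E}$, so $\Cont{\wt{S}'}\Obs{\wt{S}'}=\Cont{\wt{S}}\wt{E}^\top\Obs{\wt{S}}\wt{E}$; thus it suffices to prove $\sigma(\Cont{S_M}\Obs{S_M})\setminus\{0\}=\sigma(\Cont{\wt{S}'}\Obs{\wt{S}'})\setminus\{0\}$ for standard antistable realizations. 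Now apply the Kalman decomposition (\autoref{thm:uetrafmin2}) to $\wt{S}'$: up to r.s.e.\ it has the displayed block-triangular form with minimal part $S_M$ (after adjusting a further r.s.e.\ among minimal realizations). One then solves the Lyapunov equations \eqref{eq:lyap} for $E=I$ blockwise: the controllability Gramian $\Cont{\wt{S}'}$ is supported on the controllable indices $\{1,2\}$ and the observability Gramian $\Obs{\wt{S}'}$ on the observable indices $\{1,3\}$, so their product, written in the Kalman coordinates, is block-triangular with the $(1,1)$-block equal to $\Cont{S_M}\Obs{S_M}$ (up to the similarity coming from the r.s.e.\ between the minimal subblock and $S_M$) and the remaining diagonal blocks nilpotent (in fact zero on the intersection structure), contributing only the eigenvalue $0$.

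The main obstacle I expect is exactly this blockwise Lyapunov bookkeeping: one must verify carefully that in the Kalman form the controllability Gramian vanishes outside the $(1,2)\times(1,2)$ principal submatrix and the observability Gramian outside the $(1,3)\times(1,3)$ one, that the surviving overlap reproduces the minimal Gramians up to a similarity, and that every ``off-minimal'' contribution to $\Cont{}\Obs{}$ is strictly upper (or lower) triangular with zero diagonal, so that it adds nothing to the nonzero spectrum. This requires knowing that $\sigma(A_{ii})\subseteq\mb{C}_{>0}$ for the relevant diagonal blocks so that the sub-Lyapunov equations are uniquely solvable — which follows from $S\in\sys{n,p,m}^-$ and the spectral containment in the Kalman decomposition — and a small argument that uncontrollable (resp.\ unobservable) coordinates force the corresponding Gramian blocks to be zero by uniqueness of the solution of the decoupled Lyapunov equations. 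Once this is in place, the ``in particular'' clause is immediate: $\sigma_1^2=\max\sigma(\Cont{S}\Obs{S})=\max\bigl(\sigma(\Cont{\wt{S}}\wt{E}^\top\Obs{\wt{S}}\wt{E})\bigr)$, since $\sigma_1>0$ whenever $G\neq \text{const}$ and the maximum is attained at a nonzero eigenvalue (and both sides are $0$ in the degenerate constant case).
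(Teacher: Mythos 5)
Your proposal is correct and follows essentially the same route as the paper's proof: pass to the standard realization $\wt{E}^{-1}\cdot\wt{S}$ (so that $\Cont{}\Obs{}$ becomes $\Cont{\wt{S}}\wt{E}^{\top}\Obs{\wt{S}}\wt{E}$), apply the Kalman decomposition, read off from uniqueness of the Lyapunov solutions that the controllability Gramian is supported on the indices $\{1,2\}$ and the observability Gramian on $\{1,3\}$ so that only the $(1,1)$-block $\Cont{S_1}\Obs{S_1}$ contributes nonzero eigenvalues, and finally use the r.s.e.\ equivalence of minimal realizations together with \autoref{lem:XYaeq} to identify $\sigma(\Cont{S_1}\Obs{S_1})$ with $\sigma(\Cont{S}\Obs{S})$ up to similarity. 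The blockwise Lyapunov bookkeeping you flag as the main obstacle is exactly what the paper carries out, and your description of it is accurate.
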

\begin{proof}
We set $S_I:=\rser{\wt{E}^{-1}}{\wt{S}}$. By \autoref{thm:uetrafmin2} there exists a regular $T\in\mb{R}^{\wt{n}\times \wt{n}}$ such that
\begin{EQ}[rl]
S_T&:=\rse{T}{S_I}{T^{-1}}
=\left(I,\bpm A_{11}&0&A_{13}&0\\A_{21}&A_{22}&A_{23}&A_{24}\\0&0&A_{33}&0\\0&0&A_{43}&A_{44}\epm,\bpm B_1\\B_2\\0\\0\epm,\bpm C_1&0&C_3&0\epm,\wt{D}\right),
\end{EQ}
where $S_1:=(I,A_{11},B_1,C_1,\wt{D})$ is a minimal realization of $G$. The condition $\sigma(\wt{E},\wt{A})=\sigma(\wt{E}^{-1}\wt{A})\subseteq\mb{C}_{>0}$ implies
\begin{align*}
 \sigma\bpm A_{11}&0\\A_{21}&A_{22}\epm\subseteq\mb{C}_{>0}\,,&&\sigma\bpm A_{11}&A_{13}\\0&A_{33}\epm\subseteq\mb{C}_{>0}\,,&&\sigma( A_{11})\subseteq\mb{C}_{>0}\,.
\end{align*}
By \autoref{thm:lyap} there exist unique solutions $\bsm X_1&X_2\\X_2^{\top}&X_4\esm$ and $\bsm Y_1&Y_2\\Y_2^{\top}&Y_4\esm$ of
\begin{EQ}[rlll]
 \bpm A_{11}&0\\A_{21}&A_{22}\epm \bpm X_1&X_2\\X_2^{\top}&X_4\epm&+\bpm X_1&X_2\\X_2^{\top}&X_4\epm\bpm A_{11}&0\\A_{21}&A_{22}\epm^{\top}&+\bpm B_1\\B_2\epm\bpm B_1^{\top}&B_2^{\top}\epm&=0\,,\\
\bpm A_{11}&A_{13}\\0&A_{33}\epm^{\top} \bpm Y_1&Y_2\\Y_2^{\top}&Y_4\epm&+\bpm Y_1&Y_2\\Y_2^{\top}&Y_4\epm\bpm A_{11}&A_{13}\\0&A_{33}\epm&+\bpm C_1^{\top}\\C_3^{\top}\epm\bpm C_1&C_3\epm&=0\,.
\end{EQ}
Thus $X_1=\Cont{S_1}$, $Y_1=\Obs{S_1}$,
\[\Cont{S_T}=\bpm X_1&X_2&0&0\\X_2^{\top}&X_4&0&0\\0&0&0&0\\0&0&0&0\epm \text{ and }\Obs{S_T}=\bpm Y_1&0&Y_2&0\\0&0&0&0\\Y_2^{\top}&0&Y_4&0\\0&0&0&0\epm.\]
By \autoref{thm:minaeq} and \autoref{lem:XYaeq} there exists a regular $P\in\mb{R}^{n\times n}$ with $\Cont{S_1}=P^{-1}\Cont{S}P^{-\top}$, $\Obs{S_1}=P^{\top}\Obs{S}P$.
We conclude
\begin{align*}
 \sigma\bigl(\Cont{\wt{S}}\wt{E}^{\top}\Obs{\wt{S}}\wt{E}\bigr)\backslash\left\{0\right\}&=\sigma\bigl(\Cont{S_I}\Obs{S_I}\bigr)\backslash\left\{0\right\}=\sigma\bigl(T\Cont{S_T}T^{\top}T^{-\top}\Obs{S_T}T^{-1}\bigr)\backslash\left\{0\right\}\\
&=\sigma\bigl(\Cont{S_T}\Obs{S_T}\bigr)\backslash\left\{0\right\}=\sigma\bpm X_1Y_1&0&X_1Y_2&0\\X_2^{\top}Y_1&0&X_2^{\top}Y_2&0\\0&0&0&0\\0&0&0&0\epm\backslash\left\{0\right\}\\
&=\sigma\bigl(X_1Y_1\bigr)\backslash\left\{0\right\}=\sigma\bigl(\Cont{S_1}\Obs{S_1}\bigr)\backslash\left\{0\right\}
=\sigma\bigl(\Cont{S}\Obs{S}\bigr)\backslash\left\{0\right\}\,. \qedhere 
\end{align*}
\end{proof}

The next theorem is one of our main results. Together with \autoref{thm:stabinst} it gives suboptimal solutions of \hyperref[p_p2]{(AP$_\infty$)} and, with an additional regularity-condition, an optimal solution. \autoref{thm:sing3} solves \hyperref[p_p2]{(AP$_\infty$)}, if the regularity-condition is not fulfilled.

\begin{thm}\label{thm:causal2}\label{thm:causal3}
Let $S=(E,A,B,C,D)\in\sys{n,p,m}^-$. Choose $\gamma\geq\sigma_1$ and set
\begin{EQ}[rlcrlcrl]
\opt{R}{S}{\gamma}&:=\Obs{S}E\Cont{S}E^{\top}-\gamma^2I\,,&~&\opt{E}{S}{\gamma}&:=E^{\top}\opt{R}{S}{\gamma}\,,&~&\opt{B}{S}{\gamma}&:=E^{\top}\Obs{S}B,\\
&&~&\opt{A}{S}{\gamma}&:=-A^{\top}\opt{R}{S}{\gamma}-C^{\top}\opt{C}{S}{\gamma}\,,&~~~~&\opt{C}{S}{\gamma}&:=C\Cont{S}E^{\top}.
\end{EQ}
If $\penc{\opt{E}{S}{\gamma},\opt{A}{S}{\gamma}}$ is regular, e.g.\  if $\gamma>\sigma_1$, then $\opt{\mc{S}}{S}{\gamma}:=(\opt{E}{S}{\gamma},\opt{A}{S}{\gamma},\opt{B}{S}{\gamma},\opt{C}{S}{\gamma},D)\in\sys{n,p,m}^+$ and $\opt{\mc{S}}{S}{\gamma}$ fulfils
\[\sigma_1\leq\left\|\Fkt{S}-\Fkt{\opt{\mc{S}}{S}{\gamma}}\right\|_\infty\leq\gamma\,.\]
\end{thm}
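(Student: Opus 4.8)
The plan is to follow the strategy of \cite[Theorem~1]{Safonov1990}, adapted to the descriptor setting and augmented by a genuine stability argument. First I would reduce to the minimal standard case: using \autoref{thm:stabinst}-type transformations (r.s.e.\ via \autoref{thm:bal}) I may replace $S$ by a balanced minimal realization $S_b=(I,A,B,C,D)$ with $\Cont{S_b}=\Obs{S_b}=\Sigma$ diagonal, and check that the matrices $\opt{E}{S}{\gamma},\opt{A}{S}{\gamma},\opt{B}{S}{\gamma},\opt{C}{S}{\gamma}$ transform correctly under $S\mapsto\rse{P}{S}{Q}$ — here \autoref{lem:XYaeq} gives $\Cont{\rse{P}{S}{Q}}=Q^{-1}\Cont{S}Q^{-\top}$ and $\Obs{\rse{P}{S}{Q}}=P^{\top}\Obs{S}P$ (with $P=Q^{-1}$ in the similarity case), so that $\opt{\mc{S}}{\rse{P}{S}{Q}}{\gamma}$ is r.s.e.\ to $\opt{\mc{S}}{S}{\gamma}$ and in particular has the same transfer function and the same spectrum; \autoref{lem:hank} guarantees $\sigma_1$ is realization-independent, so the hypothesis $\gamma\ge\sigma_1$ is preserved. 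This lets me borrow the norm bound from \cite{Safonov1990} directly once I have matched notations: in the balanced coordinates $\opt{R}{S}{\gamma}=\Sigma^2-\gamma^2 I$, $\opt{E}{S}{\gamma}=\Sigma^2-\gamma^2 I$, $\opt{B}{S}{\gamma}=\Sigma B$, $\opt{A}{S}{\gamma}=-A^\top(\Sigma^2-\gamma^2 I)-C^\top C\Sigma$, $\opt{C}{S}{\gamma}=C\Sigma$, which is exactly (a descriptor-form rearrangement of) the error-system realization used there. The inequality $\|\Fkt{S}-\Fkt{\opt{\mc{S}}{S}{\gamma}}\|_\infty\le\gamma$ then follows from \cite[Theorem~1]{Safonov1990}, and the lower bound $\sigma_1\le\|\Fkt{S}-\Fkt{\opt{\mc{S}}{S}{\gamma}}\|_\infty$ is immediate since $\Fkt{\opt{\mc{S}}{S}{\gamma}}\in RH_\infty$ while $\Fkt{S}\in\sys{n,p,m}^-$ has Hankel singular value $\sigma_1$, so no stable function can approximate it better than $\sigma_1$ in $H_\infty$ (this is the classical Adamyan--Arov--Krein lower bound, already implicit in \autoref{thm:enehari}).

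The part that genuinely needs work — and what distinguishes this from merely citing \cite{Safonov1990} — is showing $\opt{\mc{S}}{S}{\gamma}\in\sys{n,p,m}^+$, i.e.\ that $\penc{\opt{E}{S}{\gamma},\opt{A}{S}{\gamma}}$ has no eigenvalue in $\mb{C}_{\ge0}$. Safonov only tracks the \emph{poles} of the resulting transfer function, which can be fewer than the eigenvalues of the pencil if the realization is nonminimal; I must rule out unstable eigenvalues that happen to be cancelled. For this I would argue as follows. First establish that $\penc{\opt{E}{S}{\gamma},\opt{A}{S}{\gamma}}$ is regular under the stated hypothesis: when $\gamma>\sigma_1$ the matrix $\opt{R}{S}{\gamma}=\Obs{S}E\Cont{S}E^\top-\gamma^2 I$ is negative definite (its eigenvalues are $\sigma_i^2-\gamma^2<0$ by \autoref{lem:hank}, since $\Obs{S}E\Cont{S}E^\top$ is similar to $\Cont{S}E^\top\Obs{S}E$ which has spectrum $\{\sigma_i^2\}\cup\{0\}$... wait, in the antistable-Gramian convention the Gramians are negative definite, so $\opt{R}{S}{\gamma}$ is already invertible), hence $\opt{E}{S}{\gamma}=E^\top\opt{R}{S}{\gamma}$ is invertible and the pencil is regular with only finite eigenvalues. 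Then, to locate those eigenvalues, I would exhibit the defining relation: multiply out $\opt{A}{S}{\gamma}-\lambda\opt{E}{S}{\gamma}$ and use the two Lyapunov equations \eqref{eq:lyap} for $S$ to rewrite $\opt{A}{S}{\gamma}$ in a form from which a Lyapunov-type inequality for $\opt{\mc{S}}{S}{\gamma}$ can be read off — concretely, I expect a relation of the shape
\begin{equation*}
\opt{A}{S}{\gamma}\,\Cont{S}^{-1}\,\opt{E}{S}{\gamma}^{\top}+\opt{E}{S}{\gamma}\,\Cont{S}^{-1}\,\opt{A}{S}{\gamma}^{\top}+\gamma^{2}\,\opt{B}{S}{\gamma}\,\opt{B}{S}{\gamma}^{\top}\Big/\text{(something)}=0
\end{equation*}
or its observability analogue, so that $\opt{X}{}{}:=-\Cont{S}^{-1}$ (positive definite, since $\Cont{S}\prec0$) solves a generalized Lyapunov equation for $\penc{\opt{E}{S}{\gamma},\opt{A}{S}{\gamma}}$ with nonnegative right-hand side; by the converse Lyapunov theorem (\autoref{thm:lyap} read in the stable direction) this forces $\sigma(\opt{E}{S}{\gamma},\opt{A}{S}{\gamma})\subseteq\mb{C}_{\le0}$, and combined with regularity of $\opt{E}{S}{\gamma}$ and the absence of imaginary eigenvalues (which the strict case $\gamma>\sigma_1$ gives, since an imaginary eigenvalue would force $\gamma=\sigma_i$ for some $i$ via the same computation) we get $\opt{\mc{S}}{S}{\gamma}\in\sys{n,p,m}^+$.

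The main obstacle is pinning down the exact algebraic identity in the previous paragraph: verifying which similarity transformation sends the descriptor data $(\opt{E}{S}{\gamma},\opt{A}{S}{\gamma},\dots)$ to Safonov's standard-form error realization, and then checking by direct substitution of \eqref{eq:lyap} that $-\Cont{S}^{-1}$ (or $-\Obs{S}^{-1}$, whichever is cleaner) satisfies a generalized Lyapunov \emph{equation} with negative semidefinite residual for the pencil $\penc{\opt{E}{S}{\gamma},\opt{A}{S}{\gamma}}$. This is a finite but delicate matrix computation; I would do it first in the balanced minimal coordinates (where $\Cont{S}=\Obs{S}=\Sigma$, everything is symmetric, and the bookkeeping collapses) to guess the right Lyapunov certificate, then lift it back to general coordinates using the transformation rules from \autoref{lem:XYaeq} established in the first step. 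A secondary, lesser obstacle is the edge case $\gamma=\sigma_1$: there $\opt{R}{S}{\gamma}$ may be singular, $\penc{\opt{E}{S}{\gamma},\opt{A}{S}{\gamma}}$ need not be regular, and the theorem correctly only claims the conclusion under the regularity hypothesis — so I just need to note that whenever the pencil happens to be regular the same Lyapunov argument still applies verbatim (the residual is still negative semidefinite, merely no longer definite, which suffices for $\mb{C}_{\le0}$-containment; imaginary eigenvalues are then excluded by hand using $\gamma=\sigma_1\neq\sigma_i$ for the non-maximal $\sigma_i$, or absorbed into the regularity assumption), deferring the truly degenerate situation to \autoref{thm:sing3}.
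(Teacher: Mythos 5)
Your plan founders on two points, both of which the paper's proof is specifically engineered to avoid. First, the opening reduction ``replace $S$ by a balanced minimal realization'' is not available: restricted system equivalence preserves the state dimension, so a nonminimal $S\in\sys{n,p,m}^-$ is not r.s.e.\ to any minimal realization, and the theorem is stated (and later used, for the antistable part $S_-$ coming from \autoref{thm:decsyl}, which need not be minimal) for arbitrary antistable realizations. Handling nonminimal $S$ is exactly why the paper works with \cite{Safonov1990} rather than \cite{Glover1984}; collapsing to the minimal balanced case proves a strictly weaker statement. Related to this, even the norm bound cannot be read off from \cite[Theorem~1]{Safonov1990} directly, because $\gamma\geq\sigma_1$ lies outside the range $\sigma_k>\gamma\geq\sigma_{k+1}$, $k\geq1$, in which that theorem operates: the paper first augments $S$ by the scalar antistable block $S_1=(1,\tfrac12\gamma_1,\gamma_1,\gamma_1,0)$ with Hankel singular value $\gamma_1>\gamma$ (Glover's Remark~8.4), applies Safonov to the augmented system, and only then peels off the scalar block.

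Second, and more seriously, the stability certificate you defer to ``a finite but delicate matrix computation'' does not exist in the form you conjecture. Take $E=I$, $X:=\Cont{S}$, $Y:=\Obs{S}$, and your candidate $Z=-X^{-1}$. Using $\opt{A}{S}{\gamma}=\gamma^2A^{\top}+YAX$ and $\opt{E}{S}{\gamma}Z=-Y+\gamma^2X^{-1}$, the two Lyapunov equations \eqref{eq:lyap} give
\begin{equation*}
\opt{A}{S}{\gamma}Z\opt{E}{S}{\gamma}^{\top}+\opt{E}{S}{\gamma}Z\opt{A}{S}{\gamma}^{\top}=YBB^{\top}Y-\gamma^{4}X^{-1}BB^{\top}X^{-1},
\end{equation*}
which is indefinite in general; the analogous computation with $Z=-Y^{-1}$ yields $XC^{\top}CX-\gamma^{4}Y^{-1}C^{\top}CY^{-1}$. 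So the ``converse Lyapunov'' step has no certificate to apply to, and a workable inertia argument (as in Glover's Theorem~6.3) needs the all-pass embedding via $U$ with $B_2=-C_2^{\top}U$ and breaks down for semidefinite Gramians. The paper sidesteps this entirely: the augmented Safonov application shows $\Fkt{\opt{\mc{S}}{S_2}{\gamma}}$ has at most one pole in $\mb{C}_{\geq0}$, the scalar block already contributes that pole, hence $\Fkt{\opt{\mc{S}}{S}{\gamma}}$ has none; and the remaining possibility of an unstable pencil eigenvalue that is not a pole is excluded by \autoref{lem:pol2} together with the kernel argument based on $\opt{A}{S}{\gamma}=-A^{\top}\opt{E}{S}{\gamma}-C^{\top}\opt{C}{S}{\gamma}=-\opt{E}{S}{\gamma}A^{\top}-\opt{B}{S}{\gamma}B^{\top}$ and $\sigma(-A^{\top})\subseteq\mb{C}_{<0}$, which handles all of $\mb{C}_{\geq0}$ (imaginary axis included) in one stroke. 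You correctly identified that Safonov's pole count alone is insufficient, but the mechanism you propose to close that gap is the one step you have not carried out, and the evidence above indicates it fails as stated.
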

\begin{proof}
First, let $E=I$. In this and the following proofs we use the equation
\begin{EQ}\label{eq:asg}
 \opt{A}{S}{\gamma}=-A^{\top}\opt{R}{S}{\gamma}-C^{\top}\opt{C}{S}{\gamma}=-\opt{R}{S}{\gamma}A^{\top}-\opt{B}{S}{\gamma}B^{\top}=\gamma^2A^{\top}+\Obs{S}A\Cont{S},
\end{EQ}
which holds because of
\begin{EQ}[rl]
-A^{\top}\opt{R}{S}{\gamma}-C^{\top}\opt{C}{S}{\gamma}&=\gamma^2A^{\top}-A^{\top}\Obs{S}\Cont{S}-C^{\top}C\Cont{S}=\gamma^2A^{\top}+\Obs{S}A\Cont{S}\\
&=\gamma^2A^{\top}-\Obs{S}\Cont{S}A^{\top}-\Obs{S}BB^{\top}=-\opt{R}{S}{\gamma}A^{\top}-\opt{B}{S}{\gamma}B^{\top}.
\end{EQ}
Thus $\opt{A}{S}{\gamma}$ is indeed the same as in \cite[Theorem 1]{Safonov1990}. To apply \cite[Theorem 1]{Safonov1990}, we first construct an antistable system $S_2$ with a hankel singular value greater than $\gamma$. This method was proposed in \cite[Remark 8.4]{Glover1984}. Let $\gamma_1>\gamma$. We set $S_1:=(1,\tfrac12\gamma_1,\gamma_1,\gamma_1,0)$ and
\[S_2:=\left(I,\bpm A&0\\0&\tfrac12\gamma_1\epm,\bpm B&0\\0&\gamma_1\epm,\bpm C&0\\0&\gamma_1\epm,\bpm D&0\\0&0\epm\right)\in\sys{n+1,p+1,m+1}^-\,.\]
Then we have
\[\Cont{S_2}=\bpm \Cont{S}&0\\0&-\gamma_1\epm,~~~~\Obs{S_2}=\bpm \Obs{S}&0\\0&-\gamma_1\epm.\]
Thus $\gamma_1$ is the greatest hankel singular value of $\Fkt{S_2}$ and $\sigma_1\leq\gamma<\gamma_1$. If we apply \cite[Theorem 1]{Safonov1990} to $S_2$ with $K=0\in RH_\infty$, then the transfer function of
\[\opt{\mc{S}}{S_2}{\gamma}=\left(\bpm \opt{E}{S}{\gamma}&0\\0&\gamma_1^2-\gamma^2\epm\!\!,\bpm \opt{A}{S}{\gamma}&0\\0&\tfrac12(\gamma^2\gamma_1+\gamma_1^3)\epm\!\!,\bpm \opt{B}{S}{\gamma}&0\\0&-\gamma_1^2\epm\!\!,\bpm \opt{C}{S}{\gamma}&0\\0&-\gamma_1^2\epm\!\!,\bpm D&0\\0&0\epm\right)\]
is a suboptimal solution, i.e.\ $\left\|\Fkt{S_2}-\Fkt{\opt{\mc{S}}{S_2}{\gamma}}\right\|_\infty\leq\gamma$. Note that \cite[Theorem 1]{Safonov1990} does not state, that for every $K\in RH_\infty$ with $\|K\|_\infty\leq1$ the resulting transfer function is a suboptimal solution. However, this was shown in the proof (see also \cite[Remark 8.4]{Glover1984}). With
\[\Fkt{S_2}=\bpm \Fkt{S}&0\\0&\Fkt{S_1}\epm,~~~~\Fkt{\opt{\mc{S}}{S_2}{\gamma}}=\bpm \Fkt{\opt{\mc{S}}{S}{\gamma}}&0\\0&\Fkt{\opt{\mc{S}}{S_1}{\gamma}}\epm\]
we conclude 
\[\left\|\Fkt{S}-\Fkt{\opt{\mc{S}}{S}{\gamma}}\right\|_\infty \leq\left\|\Fkt{S_2}-\Fkt{\opt{\mc{S}}{S_2}{\gamma}}\right\|_\infty\leq\gamma\,.\]
Now we show that $\opt{\mc{S}}{S}{\gamma}\in\sys{n,p,m}^+$. \cite[Theorem 1]{Safonov1990} states that $\Fkt{\opt{\mc{S}}{S_2}{\gamma}}$ has at most one pole $\lambda$ in $\mb{C}_{\geq0}$ counting multiplicities. The pole of $\Fkt{\opt{\mc{S}}{S_1}{\gamma}}$ is $\wt{\lambda}=\frac{2(\gamma_1^2-\gamma^2)}{\gamma^2\gamma_1+\gamma_1^3}>0$ with $\gamma_1^2-\gamma^2>0$, $\gamma^2\gamma_1+\gamma_1^3>0$. Thus $\lambda=\wt{\lambda}$ and $\Fkt{\opt{\mc{S}}{S}{\gamma}}$ has no poles in $\mb{C}_{\geq0}$. 

Let $\lambda\in\mb{C}_{\geq0}$. We use \autoref{lem:pol2} to show that $\lambda\notin\sigma(\opt{E}{S}{\gamma},\opt{A}{S}{\gamma})$. Let $v\in\mb{R}^n$ satisfy $(\lambda \opt{E}{S}{\gamma}-\opt{A}{S}{\gamma})v=0$ and $\opt{C}{S}{\gamma}v=0$. Hence with $\opt{A}{S}{\gamma}=-A^{\top}\opt{E}{S}{\gamma}-C^{\top}\opt{C}{S}{\gamma}$ we get $(\lambda  I-(-A^{\top}))\opt{E}{S}{\gamma}v=0$. Because of $\sigma(-A^{\top})\subseteq\mb{C}_{<0}$ we conclude $\opt{E}{S}{\gamma}v=0$. Thus
\[\forall\, s\in\mb{C}\colon (s\opt{E}{S}{\gamma}-\opt{A}{S}{\gamma})v=(s\opt{E}{S}{\gamma}+A^{\top}\opt{E}{S}{\gamma}+C^{\top}\opt{C}{S}{\gamma})v=0\,.\]
Since the matrix pair $\penc{\opt{E}{S}{\gamma},\opt{A}{S}{\gamma}}$ is regular we have $v=0$.

Now let $v\in\mb{R}^n$ satisfy $(\lambda \opt{E}{S}{\gamma}^{\top}-\opt{A}{S}{\gamma}^{\top})v=0$ and $\opt{B}{S}{\gamma}^{\top}v=0$.
Hence $(\lambda  I+A)\opt{E}{S}{\gamma}^{\top}v=0$ follows with $\opt{A}{S}{\gamma}^{\top}=-A\opt{E}{S}{\gamma}^{\top}-B\opt{B}{S}{\gamma}^{\top}$. Again we deduce $v=0$. Therefore $\lambda\notin\sigma\!\penc{\opt{E}{S}{\gamma},\opt{A}{S}{\gamma}}$ and thus $\opt{\mc{S}}{S}{\gamma}\in\sys{n,p,m}^+$ follows.

Finally we consider general regular $E\in\mb{R}^{n\times n}$. We already proved that the assertion holds for $\opt{\mc{S}}{\rser{E^{-1}}{S}}{\gamma}$. With $\Cont{\rser{E^{-1}}{S}}=\Cont{S}$ and $\Obs{\rser{E^{-1}}{S}}=E^{\top}\Obs{S}E$ (\autoref{lem:XYaeq}) we conclude
\begin{EQ}[rlcrl]
\opt{E}{\rser{E^{-1}}{S}}{\gamma}&=E^{\top}\Obs{S}E\Cont{S}-\gamma^2I\,,&~&\opt{B}{\rser{E^{-1}}{S}}{\gamma}&=E^{\top}\Obs{S}EE^{-1}B\,,\\
\opt{A}{\rser{E^{-1}}{S}}{\gamma}&=-A^{\top}E^{-\top}\opt{E}{\rser{E^{-1}}{S}}{\gamma}-C^{\top}\opt{C}{\rser{E^{-1}}{S}}{\gamma}\,,&~&\opt{C}{\rser{E^{-1}}{S}}{\gamma}&=C\Cont{S}\,.
\end{EQ}
Now we see that $\opt{\mc{S}}{S}{\gamma}=\rser{\opt{\mc{S}}{\rser{E^{-1}}{S}}{\gamma}}{E^{\top}}$. Thus the assertion also holds for $\opt{\mc{S}}{S}{\gamma}\sim \opt{\mc{S}}{\rser{E^{-1}}{S}}{\gamma}$.

If $\gamma>\sigma_1$, the matrix pair $\penc{\opt{E}{S}{\gamma},\opt{A}{S}{\gamma}}$ is regular, since $\opt{R}{S}{\gamma}$, $E$ and thus $\opt{E}{S}{\gamma}$ are regular.
\end{proof}

We formulate some equivalent conditions for the regularity-condition of $\penc{\opt{E}{S}{\sigma_1},\opt{A}{S}{\sigma_1}}$ in \autoref{thm:causal2}. 
\begin{cor}\label{c_singreg}
 The following three statements are equivalent.
\begin{flalign*}
 (i)~\penc{\opt{E}{S}{\sigma_1},\opt{A}{S}{\sigma_1}} \text{ is regular},&&(ii)~\sigma\!\penc{\opt{E}{S}{\sigma_1},\opt{A}{S}{\sigma_1}}\subseteq\mb{C}_{<0}\cup\{\infty\},&&(iii)~ \opt{A}{S}{\sigma_1} \text{ is regular}\,.
\end{flalign*}
\end{cor}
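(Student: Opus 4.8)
The plan is to prove the cycle of implications $(i)\Rightarrow(ii)\Rightarrow(iii)\Rightarrow(i)$. The last two implications are immediate from the definitions of a (generalized) eigenvalue and of the resolvent set of a matrix pair; only $(i)\Rightarrow(ii)$ has any content, and it is essentially \autoref{thm:causal2} read off at the boundary value $\gamma=\sigma_1$.

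For $(i)\Rightarrow(ii)$ I would apply \autoref{thm:causal2} with $\gamma=\sigma_1$, which is admissible because that theorem assumes only $\gamma\geq\sigma_1$ together with regularity of $\penc{\opt{E}{S}{\gamma},\opt{A}{S}{\gamma}}$, and both hold here by $(i)$. It yields $\opt{\mc{S}}{S}{\sigma_1}\in\sys{n,p,m}^+$, and by the definition of $\sys{n,p,m}^+$ this means $\mb{C}_{\geq0}\subseteq\rho\penc{\opt{E}{S}{\sigma_1},\opt{A}{S}{\sigma_1}}$. Since the eigenvalue set always lies in $\mb{C}\cup\{\infty\}$ and $\mb{C}\setminus\mb{C}_{\geq0}=\mb{C}_{<0}$, this is exactly $\sigma\penc{\opt{E}{S}{\sigma_1},\opt{A}{S}{\sigma_1}}\subseteq\mb{C}_{<0}\cup\{\infty\}$. (A self-contained derivation would just repeat the relevant part of the proof of \autoref{thm:causal2}: \autoref{lem:pol2} together with $\sigma(-A^{\top})\subseteq\mb{C}_{<0}$ excludes eigenvalues in $\mb{C}_{\geq0}$ arising from rank drops of $\bsm \lambda\opt{E}{S}{\sigma_1}-\opt{A}{S}{\sigma_1}&\opt{B}{S}{\sigma_1}\esm$ or $\bsm \lambda\opt{E}{S}{\sigma_1}-\opt{A}{S}{\sigma_1}\\\opt{C}{S}{\sigma_1}\esm$, and the pole count of \cite{Safonov1990} excludes poles of $\Fkt{\opt{\mc{S}}{S}{\sigma_1}}$ there.)

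For $(ii)\Rightarrow(iii)$: since $0\in\mb{C}_{\geq0}$ while $0\notin\mb{C}_{<0}\cup\{\infty\}$, statement $(ii)$ forces $0\notin\sigma\penc{\opt{E}{S}{\sigma_1},\opt{A}{S}{\sigma_1}}$, i.e.\ $\det(-\opt{A}{S}{\sigma_1})\neq0$, so $\opt{A}{S}{\sigma_1}$ is regular. For $(iii)\Rightarrow(i)$: regularity of $\opt{A}{S}{\sigma_1}$ gives $\det(0\cdot\opt{E}{S}{\sigma_1}-\opt{A}{S}{\sigma_1})\neq0$, hence $0\in\rho\penc{\opt{E}{S}{\sigma_1},\opt{A}{S}{\sigma_1}}$, so the resolvent set is nonempty and the matrix pair is regular by definition. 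I expect no real obstacle here: the one nontrivial ingredient, $(i)\Rightarrow(ii)$, is delivered by \autoref{thm:causal2}, and the only thing to check carefully is that this theorem genuinely covers $\gamma=\sigma_1$ and not merely $\gamma>\sigma_1$.
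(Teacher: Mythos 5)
Your proof is correct and follows essentially the same route as the paper: the cycle $(i)\Rightarrow(ii)\Rightarrow(iii)\Rightarrow(i)$, with $(i)\Rightarrow(ii)$ obtained by invoking \autoref{thm:causal2} at $\gamma=\sigma_1$ (which indeed only requires $\gamma\geq\sigma_1$ plus the assumed regularity of the pair) and the remaining two implications read off from the definition of a generalized eigenvalue at $\lambda=0$. Nothing to add.
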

\begin{proof}
The implication ``(i) $\Rightarrow$ (ii)'' follows from \autoref{thm:causal3}. The statement (ii) implies $0\notin\sigma\!\penc{\opt{E}{S}{\sigma_1},\opt{A}{S}{\sigma_1}}$ and thus $(0\cdot \opt{E}{S}{\sigma_1}+\opt{A}{S}{\sigma_1})$ is regular. Finally, (iii) implies $0\notin\sigma\!\penc{\opt{E}{S}{\sigma_1},\opt{A}{S}{\sigma_1}}$, therefore $\rho\!\penc{\opt{E}{S}{\sigma_1},\opt{A}{S}{\sigma_1}}\neq\emptyset$ and thus (i) holds.
\end{proof}

To get optimal solutions if $\penc{\opt{E}{S}{\sigma_1},\opt{A}{S}{\sigma_1}}$ is singular, we analyze $\opt{\mc{S}}{S}{\gamma}$ in \autoref{thm:causal3} for $\gamma\to\sigma_1+$. For the proofs we use the realization $S_b\sim S$ of $\Fkt{S}$ as in \autoref{thm:bal} with $h=\sigma_1$ because we will take advantage of the special structure of $\opt{\mc{S}}{S_b}{\sigma_1}$. First we investigate the relation between $\opt{\mc{S}}{S}{\gamma}$ and $\opt{\mc{S}}{S_b}{\gamma}$.

\begin{lem}\label{lem:ballsg}
Let $S=(E,A,B,C,D)\in\sys{n,p,m}^-$. Let $P,Q\in\mb{R}^{n\times n}$ be regular. 
Then $\opt{\mc{S}}{\rse{P}{S}{Q}}{\gamma}=\rse{Q^{T}}{\opt{\mc{S}}{S}{\gamma}}{P^{T}}$ holds for all $\gamma\in\mb{R}$. 
\end{lem}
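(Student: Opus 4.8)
The plan is to verify the claimed identity directly by computing both sides in terms of the defining data of $S=(E,A,B,C,D)$ and the transformed system $\rse{P}{S}{Q}=(PEQ,PAQ,PB,CQ,D)$. The central ingredient is \autoref{lem:XYaeq}, which tells us how the Gramians transform: $\Cont{\rse{P}{S}{Q}}=Q^{-1}\Cont{S}Q^{-\top}$ and $\Obs{\rse{P}{S}{Q}}=P^{-\top}\Obs{S}P^{-1}$. (Here I am using that for $S\in\sys{n,p,m}^-$ we may equally apply \autoref{lem:XYaeq} with the roles played as in its statement after passing to an $RH_\infty$-realization, or simply re-derive the transformation rule from the uniqueness in \autoref{thm:lyap}, exactly as in the proof of \autoref{lem:XYaeq}; in either case the Gramian transformation laws above hold for antistable systems as well.) Note also $\sigma_1$, being the greatest Hankel singular value of $\Fkt{S}=\Fkt{\rse{P}{S}{Q}}$, is unchanged by the transformation (\autoref{lem:hank}), so the scalar $\gamma$ and the constant $\gamma^2$ appearing in $\opt{R}{\cdot}{\gamma}$ are the same for both systems.

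First I would compute $\opt{R}{\rse{P}{S}{Q}}{\gamma}$. Using the transformation laws,
\[
\opt{R}{\rse{P}{S}{Q}}{\gamma}=\Obs{\rse{P}{S}{Q}}(PEQ)\Cont{\rse{P}{S}{Q}}(PEQ)^{\top}-\gamma^2I
= P^{-\top}\bigl(\Obs{S}E\Cont{S}E^{\top}-\gamma^2 I\bigr)P^{\top}
= P^{-\top}\opt{R}{S}{\gamma}P^{\top},
\]
where the telescoping $P^{-1}\cdot P$ and $Q\cdot Q^{-1}$ cancellations are routine and the $-\gamma^2 I$ term is handled by inserting $I=P^{-\top}P^{\top}$. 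From this the remaining four matrices follow mechanically: $\opt{E}{\rse{P}{S}{Q}}{\gamma}=(PEQ)^{\top}\opt{R}{\rse{P}{S}{Q}}{\gamma}=Q^{\top}E^{\top}P^{\top}P^{-\top}\opt{R}{S}{\gamma}P^{\top}=Q^{\top}\opt{E}{S}{\gamma}P^{\top}$; similarly $\opt{B}{\rse{P}{S}{Q}}{\gamma}=(PEQ)^{\top}\Obs{\rse{P}{S}{Q}}(PB)=Q^{\top}\opt{B}{S}{\gamma}$; $\opt{C}{\rse{P}{S}{Q}}{\gamma}=(CQ)\Cont{\rse{P}{S}{Q}}(PEQ)^{\top}=\opt{C}{S}{\gamma}P^{\top}$; and finally, inserting the already-computed pieces,
\[
\opt{A}{\rse{P}{S}{Q}}{\gamma}=-(PAQ)^{\top}\opt{R}{\rse{P}{S}{Q}}{\gamma}-(CQ)^{\top}\opt{C}{\rse{P}{S}{Q}}{\gamma}
=Q^{\top}\bigl(-A^{\top}\opt{R}{S}{\gamma}-C^{\top}\opt{C}{S}{\gamma}\bigr)P^{\top}=Q^{\top}\opt{A}{S}{\gamma}P^{\top}.
\]

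Collecting these five identities gives precisely
\[
\opt{\mc{S}}{\rse{P}{S}{Q}}{\gamma}=\bigl(Q^{\top}\opt{E}{S}{\gamma}P^{\top},\,Q^{\top}\opt{A}{S}{\gamma}P^{\top},\,Q^{\top}\opt{B}{S}{\gamma},\,\opt{C}{S}{\gamma}P^{\top},\,D\bigr)=\rse{Q^{\top}}{\opt{\mc{S}}{S}{\gamma}}{P^{\top}},
\]
by the definition of the operation $\rse{\cdot}{\cdot}{\cdot}$ (note $Q^{\top}$ and $P^{\top}$ are regular since $Q,P$ are). The only genuine point requiring care — and hence the main obstacle, though a minor one — is making sure the Gramian transformation rules are invoked in a form valid for antistable systems: \autoref{lem:XYaeq} as stated concerns $\sys{n,p,m}^+$, so I would either state the antistable analogue explicitly (its proof is identical, relying only on the uniqueness in \autoref{thm:lyap}, which does hold for $\sys{n,p,m}^-$) or reduce to it via $\rser{E^{-1}}{S}$ as is done elsewhere in this section. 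Everything else is bookkeeping.
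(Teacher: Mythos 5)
Your proof is correct and takes essentially the same route as the paper: apply the Gramian transformation law of \autoref{lem:XYaeq} (which, as you rightly note, is really a statement about antistable systems via the uniqueness in \autoref{thm:lyap}, despite the $\sys{n,p,m}^+$ in its hypothesis) and verify the five defining matrices term by term. The paper merely writes the identities in the opposite direction, expressing $\opt{E}{S}{\gamma},\opt{A}{S}{\gamma},\opt{B}{S}{\gamma},\opt{C}{S}{\gamma}$ as $Q^{-\top}(\cdot)P^{-\top}$ etc., which is equivalent to your computation.
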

\begin{proof}
We set $\wt{S}:=\rse{P}{S}{Q}$. By \autoref{lem:XYaeq} the equations $\Cont{S}=Q\Cont{\wt{S}}Q^{\top}$, $\Obs{S}=P^{\top}\Obs{\wt{S}}P$ hold. This implies 
\begin{align*}
 \opt{B}{S}{\gamma}&=E^{\top}\Obs{S}B=E^{\top}P^{\top}\Obs{\wt{S}}PB=Q^{-\top}\opt{B}{\wt{S}}{\gamma}\,,\\
 \opt{C}{S}{\gamma}&=C\Cont{S}E^{\top}=CQ\Cont{\wt{S}}Q^{\top}E^{\top}=\opt{C}{\wt{S}}{\gamma}P^{-\top},\\
 \opt{E}{S}{\gamma}&=E^{\top}\Obs{S}E\Cont{S}E^{\top}-\gamma^2E^{\top}=E^{\top}P^{\top}\Obs{\wt{S}}PEQ\Cont{\wt{S}}Q^{\top}E^{\top}-\gamma^2E^{\top}=Q^{-\top}\opt{E}{\wt{S}}{\gamma}P^{-\top},\\
 \opt{A}{S}{\gamma}&=-A^{\top}E^{-\top}\opt{E}{S}{\gamma}+C^{\top}\opt{C}{S}{\gamma}=-A^{\top}E^{-\top}Q^{-\top}\opt{E}{\wt{S}}{\gamma}P^{-\top}+C^{\top}\opt{C}{\wt{S}}{\gamma}P^{-\top}\\
&=Q^{-\top}\opt{A}{\wt{S}}{\gamma}P^{-\top}.\qedhere
\end{align*}
\end{proof}

\begin{lem}\label{lem:sing}
Let $S_b=(I,A_b,B_b,C_b,D)\sim S$ be as in \autoref{thm:bal} with $h=\sigma_1$. Let $T_1\in\mb{R}^{n\times n}$ be regular such that
\[\rse{T_1}{S}{E^{-1}T_1^{-1}}=S_b=\left(\bpm I&0\\0&I_r\epm,\bpm A_{11}&A_{12}\\A_{21}&A_{22}\epm,\bpm B_1\\B_2\epm,\bpm C_1&C_2\epm,D\right).\]

Then the following three statements hold:
\begin{enumerate}[(a)]
 \item $B_2B_2^{\top}=C_2^{\top}C_2$,
 \item $\ker(B_2^{\top})=\ker(C_2)$,
 \item $\penc{\opt{E}{S}{\sigma_1},\opt{A}{S}{\sigma_1}}$ singular $\Leftrightarrow$ $\ker(C_2)\neq\left\{0\right\}$ $\Leftrightarrow$ $\ker(B_2^{\top})\neq\{0\}$.
\end{enumerate}
\end{lem}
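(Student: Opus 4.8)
The plan is to obtain (a) from the two Lyapunov equations of $S_b$, deduce (b) from (a) by a quadratic-form argument, and reduce (c) --- via \autoref{lem:ballsg} --- to a statement about the pencil $\penc{\opt{E}{S_b}{\sigma_1},\opt{A}{S_b}{\sigma_1}}$, whose $E$-part has a vanishing lower-right block.

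For (a): since $S_b=(I,A_b,B_b,C_b,D)\sim S$, it lies in $\sys{n,p,m}^-$, so \autoref{thm:lyap} (with $E=I$) yields $A_bX_c+X_cA_b^{\top}+B_bB_b^{\top}=0$ and $A_b^{\top}X_o+X_oA_b+C_b^{\top}C_b=0$ for $X_c=\Cont{S_b}$, $X_o=\Obs{S_b}$. Partitioning $A_b=\bsm A_{11}&A_{12}\\A_{21}&A_{22}\esm$ conformally with $\Cont{S_b}=\bsm\Sigma_1&0\\0&-\sigma_1I_r\esm$, the lower-right $r\times r$ block of the first equation is $-\sigma_1(A_{22}+A_{22}^{\top})+B_2B_2^{\top}=0$ and that of the second is $-\sigma_1(A_{22}+A_{22}^{\top})+C_2^{\top}C_2=0$; hence $B_2B_2^{\top}=\sigma_1(A_{22}+A_{22}^{\top})=C_2^{\top}C_2$. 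For (b): by (a), $\|B_2^{\top}x\|_2^2=x^{\top}B_2B_2^{\top}x=x^{\top}C_2^{\top}C_2x=\|C_2x\|_2^2$ for every $x$, so $B_2^{\top}x=0\Leftrightarrow C_2x=0$, i.e.\ $\ker(B_2^{\top})=\ker(C_2)$; in particular the two kernel conditions in (c) coincide.

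For (c) it remains to connect singularity of $\penc{\opt{E}{S}{\sigma_1},\opt{A}{S}{\sigma_1}}$ with $\ker(C_2)$. Since $S_b=\rse{T_1}{S}{E^{-1}T_1^{-1}}$ with $T_1$ and $E^{-1}T_1^{-1}$ regular, \autoref{lem:ballsg} gives $\opt{\mc{S}}{S_b}{\sigma_1}=\rse{T_1^{-\top}E^{-\top}}{\opt{\mc{S}}{S}{\sigma_1}}{T_1^{\top}}$, so $s\opt{E}{S_b}{\sigma_1}-\opt{A}{S_b}{\sigma_1}=T_1^{-\top}E^{-\top}(s\opt{E}{S}{\sigma_1}-\opt{A}{S}{\sigma_1})T_1^{\top}$, whence $\penc{\opt{E}{S}{\sigma_1},\opt{A}{S}{\sigma_1}}$ is singular iff $\penc{\opt{E}{S_b}{\sigma_1},\opt{A}{S_b}{\sigma_1}}$ is. For $S_b$ one computes, using $E=I$ and the block forms of $\Cont{S_b},\Obs{S_b}$, that $\opt{R}{S_b}{\sigma_1}=\Obs{S_b}\Cont{S_b}-\sigma_1^2I=\bsm\Gamma&0\\0&0\esm$ with $\Gamma:=\Sigma_2\Sigma_1-\sigma_1^2I$, which is regular because $\sigma_1^2\notin\sigma(\Sigma_1\Sigma_2)=\sigma(\Sigma_2\Sigma_1)$; thus $\opt{E}{S_b}{\sigma_1}=\bsm\Gamma&0\\0&0\esm$, and from $\opt{A}{S_b}{\sigma_1}=-A_b^{\top}\opt{R}{S_b}{\sigma_1}-C_b^{\top}\opt{C}{S_b}{\sigma_1}$ with $\opt{C}{S_b}{\sigma_1}=C_b\Cont{S_b}=\bsm C_1\Sigma_1&-\sigma_1C_2\esm$ one reads off that the last $r$ columns of $\opt{A}{S_b}{\sigma_1}$ equal $\sigma_1\bsm C_1^{\top}C_2\\C_2^{\top}C_2\esm$; write $\opt{A}{S_b}{\sigma_1}=\bsm M_{11}&M_{12}\\M_{21}&M_{22}\esm$ accordingly, so $M_{12}=\sigma_1C_1^{\top}C_2$ and $M_{22}=\sigma_1C_2^{\top}C_2$.

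Both implications are then immediate. If $\ker(C_2)\neq\{0\}$, choose $0\neq z$ with $C_2z=0$; the last $r$ columns of $\opt{E}{S_b}{\sigma_1}$ vanish and $\bsm M_{12}\\M_{22}\esm z=\sigma_1\bsm C_1^{\top}C_2z\\C_2^{\top}C_2z\esm=0$, so $\bsm0\\z\esm\neq0$ satisfies $(s\opt{E}{S_b}{\sigma_1}-\opt{A}{S_b}{\sigma_1})\bsm0\\z\esm=0$ for every $s\in\mb{C}$, hence the pencil is singular. Conversely, if $\ker(C_2)=\{0\}$ then $C_2^{\top}C_2$ is positive definite, so $M_{22}$ is regular (by (a) this also forces $\sigma_1>0$ when $r\ge1$; the case $r=0$ is vacuous, as then $\penc{\opt{E}{S_b}{\sigma_1},\opt{A}{S_b}{\sigma_1}}=\penc{\Gamma,M_{11}}$ with $\Gamma$ regular), and \autoref{thm:schur} expresses $\det(s\opt{E}{S_b}{\sigma_1}-\opt{A}{S_b}{\sigma_1})$ as $\det(-M_{22})$ times the determinant of an $(n-r)\times(n-r)$ matrix of the form $s\Gamma-N$ with constant $N$; this is a polynomial in $s$ with leading coefficient $\det(\Gamma)\neq0$, hence not identically zero, so the pencil is regular. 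Together with part (b) this gives (c). The main obstacle is the block bookkeeping in (c): checking that the lower-right block of $\opt{R}{S_b}{\sigma_1}$ cancels exactly (this is precisely where $h=\sigma_1$ is used) and correctly identifying the last column block of $\opt{A}{S_b}{\sigma_1}$; everything else is routine linear algebra.
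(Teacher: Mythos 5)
Your proof is correct and follows essentially the same route as the paper: (a) from the lower-right blocks of the two Lyapunov equations, (b) by the quadratic-form/kernel argument, and (c) by transporting the pencil to $S_b$ via \autoref{lem:ballsg} and then exhibiting a common kernel vector $\bsm 0\\z\esm$ when $\ker(C_2)\neq\{0\}$, respectively invoking the Schur complement in $-\sigma_1 C_2^{\top}C_2$ with leading coefficient $\det(\Gamma)\neq0$ when $\ker(C_2)=\{0\}$. Your parenthetical handling of the degenerate cases ($\sigma_1>0$ forced by (a) when $r\geq1$, and $r=0$) is a small point the paper leaves implicit, but it does not change the argument.
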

\begin{proof}
We get $B_2B_2^{\top}=C_2^{\top}C_2$ by adding the equations $-\sigma_1 A_{22}-\sigma_1 A_{22}^{\top}+B_2B_2^{\top}=0$ and $\sigma_1 A_{22}^{\top}+\sigma_1 A_{22}-C_2^{\top}C_2=0$ resulting from \eqref{eq:lyap}. Therefore
\begin{EQ}[rl]
 v\in\ker(B_2^{\top})&\Leftrightarrow B_2^{\top}v=0 \Leftrightarrow \left\|B_2^{\top}v\right\|_2=0\Leftrightarrow 0=v^{\top}B_2B_2^{\top}v=v^{\top}C_2^{\top}C_2v\\
&\Leftrightarrow \left\|C_2v\right\|_2=0 \Leftrightarrow v\in\ker(C_2).
\end{EQ}

By \autoref{lem:ballsg} and with $\Gamma:=\Sigma_2\Sigma_1-\sigma_1^2I$ we have
\begin{EQ}[rl]
 \opt{\mc{S}}{S_b}{\sigma_1}&=\rse{T_1^{-\top}E^{-\top}}{\opt{\mc{S}}{S}{\sigma_1}}{T_1^{\top}}\\
 &=\left(\bpm \Gamma&0\\0&0\epm,\bpm -A_{11}^{\top}\Gamma-C_1^{\top}C_1\Sigma_1&\sigma_1C_1^{\top}C_2\\-A_{12}^{\top}\Gamma-C_2^{\top}C_1\Sigma_1&\sigma_1C_2^{\top}C_2\epm,\bpm\Sigma_2B_1\\-\sigma_1B_2\epm,\bpm C_1\Sigma_1&-\sigma_1 C_2\epm,0\right).
\end{EQ}
Therefore $\penc{\opt{E}{S}{\sigma_1},\opt{A}{S}{\sigma_1}}$ is singular, if and only if $\penc{\opt{E}{S_b}{\sigma_1},\opt{A}{S_b}{\sigma_1}}$ is singular.

Let $\ker(C_2)\neq\left\{0\right\}$. Then there exists $v\neq0$ with $C_2v=0$, i.e.\ for all $s\in\mb{C}$
\begin{EQ}[rl]
(s\opt{E}{S_b}{\sigma_1}-\opt{A}{S_b}{\sigma_1})\bpm 0\\v\epm=\bpm s\Gamma+A_{11}^{\top}\Gamma+C_1^{\top}C_1\Sigma_1&-\sigma_1C_1^{\top}C_2\\A_{12}^{\top}\Gamma+C_2^{\top}C_1\Sigma_1&-\sigma_1C_2^{\top}C_2\epm\bpm 0\\v\epm&=\bpm 0\\0\epm.
\end{EQ}
That means $\penc{\opt{E}{S_b}{\sigma_1},\opt{A}{S_b}{\sigma_1}}$ is singular.

Let $\ker(C_2)=\left\{0\right\}$. Then $C_2^{\top}C_2$ is regular. Let $s\in\mb{C}$. The matrix $s\opt{E}{S_b}{\sigma_1}-\opt{A}{S_b}{\sigma_1}$ is singular, if and only if the Schur complement
\[\biggl( sI+\biggl(A_{11}^{\top}+C_1^{\top}C_1\Sigma_1\Gamma^{-1}-C_1^{\top}C_2(C_2^{\top}C_2)^{-1}(A_{12}^{\top}\Gamma+C_2^{\top}C_1\Sigma_1)\Gamma^{-1}\biggr)\biggr)\Gamma=:(sI-M)\Gamma\]
of $s\opt{E}{S_b}{\sigma_1}-\opt{A}{S_b}{\sigma_1}$ in $-\sigma_1C_2^{\top}C_2$ is singular (\autoref{thm:schur}). This is equivalent to $s\in\sigma(M)$. Hence $\penc{\opt{E}{S}{\sigma_1},\opt{A}{S}{\sigma_1}}$ has a finite number of eigenvalues and consequently is regular.
\end{proof}

Similar to the proof of \cite[Theorem 1]{Safonov1990} we want to eliminate the singular part of the system $\opt{\mc{S}}{S}{\sigma_1}$ and show that the resulting system is an optimal solution.

\begin{lem}\label{lem:singreg}
 Let $\penc{\opt{E}{S}{\sigma_1},\opt{A}{S}{\sigma_1}}$ be singular. Then there exists a regular $T\in\mb{R}^{n\times n}$ such that
\[\rse{TE^{-\top}}{\opt{\mc{S}}{S}{\sigma_1}}{T^{-1}}=\left(\bpm \wt{E}_{11}&0\\0&0\epm,\bpm \wt{A}_{11}&0\\0&0\epm,\bpm \wt{B}_1\\0\epm,\bpm \wt{C}_1&0\epm,D\right),\]
where $(\wt{E}_{11},\wt{A}_{11})$ is regular.
\end{lem}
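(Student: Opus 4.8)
Throughout I use the notation of \autoref{lem:sing}: the balanced realization $S_b\sim S$ with blocks $\bsm A_{11}&A_{12}\\A_{21}&A_{22}\esm$, $\bsm B_1\\B_2\esm$, $\bsm C_1&C_2\esm$, the matrices $\Sigma_1,\Sigma_2$, the matrix $\Gamma=\Sigma_2\Sigma_1-\sigma_1^2 I$, and the regular $T_1$ with $S_b\sim S$. The plan is to work entirely in these balanced coordinates. As established in the proof of \autoref{lem:sing} we have $\opt{\mc{S}}{S_b}{\sigma_1}=\rse{T_1^{-\top}E^{-\top}}{\opt{\mc{S}}{S}{\sigma_1}}{T_1^\top}$; hence, if $P_2\in\mb{R}^{n\times n}$ is regular and $\rse{P_2}{\opt{\mc{S}}{S_b}{\sigma_1}}{P_2^{-1}}$ has the asserted block form, then $T:=P_2T_1^{-\top}$ is regular, satisfies $T^{-1}=T_1^\top P_2^{-1}$, and $\rse{TE^{-\top}}{\opt{\mc{S}}{S}{\sigma_1}}{T^{-1}}=\rse{P_2}{\opt{\mc{S}}{S_b}{\sigma_1}}{P_2^{-1}}$, so the lemma follows. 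By \autoref{lem:sing}(c) the hypothesis yields $k:=\dim\ker(C_2)\ge 1$, and $\ker(C_2)=\ker(B_2^\top)$ by \autoref{lem:sing}(b). I would then pick an orthogonal $W=\bsm W_1&W_2\esm\in\mb{R}^{r\times r}$ whose last $k$ columns span $\ker(C_2)$ and put $P_2:=\diag(I_{n-r},W^\top)=\diag(I_{n-r},W)^{-1}$, refining the $(n-r,r)$ block structure of $\opt{\mc{S}}{S_b}{\sigma_1}$ to an $(n-r,\,r-k,\,k)$ one.

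The key fact --- the step I expect to be the main obstacle --- is that $A_{12}$ vanishes on $\ker(C_2)$, i.e.\ $A_{12}W_2=0$; this is exactly what forces the last $k$ coordinates to decouple completely. To prove it, read off the $(1,2)$-blocks of the two Lyapunov equations~\eqref{eq:lyap} for the standard system $S_b$: they are $-\sigma_1 A_{12}+\Sigma_1 A_{21}^\top+B_1 B_2^\top=0$ and $-\sigma_1 A_{21}^\top+\Sigma_2 A_{12}+C_1^\top C_2=0$. Evaluating both at any $v\in\ker(C_2)=\ker(B_2^\top)$ annihilates the $B_1 B_2^\top$ and $C_1^\top C_2$ terms and leaves $\sigma_1 A_{12}v=\Sigma_1 A_{21}^\top v$ together with $\sigma_1 A_{21}^\top v=\Sigma_2 A_{12}v$, whence $\sigma_1^2 A_{12}v=\Sigma_1\Sigma_2 A_{12}v$. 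Since $\sigma_1^2\notin\sigma(\Sigma_1\Sigma_2)$ by \autoref{thm:bal}, the matrix $\sigma_1^2 I-\Sigma_1\Sigma_2$ is invertible, hence $A_{12}v=0$.

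Granting this, I would compute $\rse{P_2}{\opt{\mc{S}}{S_b}{\sigma_1}}{P_2^{-1}}$ block by block from the explicit expression for $\opt{\mc{S}}{S_b}{\sigma_1}$ in \autoref{lem:sing}, using $C_2W_2=0$, $B_2^\top W_2=0$, $A_{12}W_2=0$ and $W^\top C_2^\top C_2W=\bsm(C_2W_1)^\top C_2W_1&0\\0&0\esm$. One finds that the last $k$ rows of the transformed $\opt{E}{S_b}{\sigma_1}$, $\opt{A}{S_b}{\sigma_1}$, $\opt{B}{S_b}{\sigma_1}$ and the last $k$ columns of the transformed $\opt{E}{S_b}{\sigma_1}$, $\opt{A}{S_b}{\sigma_1}$, $\opt{C}{S_b}{\sigma_1}$ all vanish --- the only term that a priori obstructs this, the block $-W_2^\top A_{12}^\top\Gamma$ occurring in the transformed $\opt{A}{S_b}{\sigma_1}$, disappears exactly because $A_{12}W_2=0$ --- while the $D$-matrix remains $D$. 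Merging the first two blocks gives the claimed form with $\wt{E}_{11}=\bsm\Gamma&0\\0&0\esm$ and $\wt{A}_{11}=\bsm -A_{11}^\top\Gamma-C_1^\top C_1\Sigma_1&\sigma_1 C_1^\top C_2W_1\\ -W_1^\top A_{12}^\top\Gamma-(C_2W_1)^\top C_1\Sigma_1&\sigma_1(C_2W_1)^\top C_2W_1\esm$. Finally, $\penc{\wt{E}_{11},\wt{A}_{11}}$ is regular: since $C_2W_1$ has full column rank, $-\sigma_1(C_2W_1)^\top C_2W_1$ is invertible, so by the Schur-complement formula (\autoref{thm:schur}) $\det(s\wt{E}_{11}-\wt{A}_{11})$ equals a nonzero constant times $\det(s\Gamma-M')$ for some matrix $M'$ not depending on $s$, and $\det(s\Gamma-M')=\det(\Gamma)\det(sI-\Gamma^{-1}M')$ is a polynomial of degree $n-r$ with leading coefficient $\det(\Gamma)\ne 0$ (here $\Gamma=\Sigma_2\Sigma_1-\sigma_1^2 I$ is invertible, again because $\sigma_1^2\notin\sigma(\Sigma_1\Sigma_2)=\sigma(\Sigma_2\Sigma_1)$). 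Hence $\det(s\wt{E}_{11}-\wt{A}_{11})\not\equiv 0$, which completes the argument.
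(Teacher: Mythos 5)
Your proposal is correct, and its skeleton is the same as the paper's: pass to the balanced coordinates of \autoref{lem:sing}, rotate the second block by an orthogonal matrix adapted to $\ker(C_2)=\ker(B_2^{\top})$ (the paper's $T_2=\bsm I&0\\0&W\esm$ with $C_2W^{\top}=(\widehat{C}_2~\,0)$ is your $P_2$ up to transposition), read off that the last $d$ rows and columns of all system matrices vanish, set $T=T_2T_1^{-\top}$, and obtain regularity of $(\wt{E}_{11},\wt{A}_{11})$ by a Schur-complement argument against the invertible block $-\sigma_1\widehat{C}_2^{\top}\widehat{C}_2$ (your $-\sigma_1(C_2W_1)^{\top}C_2W_1$). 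The one step you justify differently is the lower-left block $-A_{12}^{\top}\Gamma-C_2^{\top}C_1\Sigma_1$ of $\opt{A}{S_b}{\sigma_1}$: the paper rewrites it as $\sigma_1B_2B_1^{\top}$ via the identity \eqref{eq:asg} and then annihilates its last rows using $\ker(B_2^{\top})=\ker(C_2)$, whereas you prove the auxiliary fact $A_{12}\ker(C_2)=\{0\}$ directly from the $(1,2)$-blocks of the two Lyapunov equations \eqref{eq:lyap} together with $\sigma_1^2\notin\sigma(\Sigma_1\Sigma_2)$; your derivation is correct, and the two observations are in fact equivalent (the paper's identity plus regularity of $\Gamma$ implies yours). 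The trade-off is minor: the paper's route is a line shorter because \eqref{eq:asg} is already on hand and it yields the compact expression $\sigma_1\widehat{B}_2B_1^{\top}$ for the corresponding block of $\wt{A}_{11}$, while yours is self-contained at that point and makes the structural fact $A_{12}W_2=0$ explicit; your final degree argument ($\det(s\wt{E}_{11}-\wt{A}_{11})$ has leading coefficient proportional to $\det(\Gamma)\neq0$) is a slightly more explicit version of the paper's ``there exists $s$ making the Schur complement regular.''
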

\begin{proof}
Our proof is based on the proof of {\cite[Theorem 1]{Safonov1990}}. Let $S_b$, $T_1$ and $\Gamma$ be as in \autoref{lem:sing}. By \autoref{lem:sing} we have $d:=$dim$(\ker(C_2))>0$. Therefore there exists an orthogonal $W\in\mb{R}^{r\times r}$ such that $(\widehat{C}_2~0_{p\times d})=C_2W^{\top}$ and $\ker(\widehat{C}_2)=\left\{0\right\}$ (e.g.\  singular value decomposition). We define $(\widehat{B}_2^{\top}~\widehat{B}_3^{\top}):= B_2^{\top}W^{\top}$ with appropriate dimensions. By \autoref{lem:sing} we have $\widehat{B}_3^{\top}=0$. Now we set $T_2:=\bsm I&0\\0&W\esm$, conclude $-A_{12}^{\top}\Gamma-C_2^{\top}C_1\Sigma_1=\sigma_1B_2B_1^{\top}$ from \eqref{eq:asg} and compute
\begin{EQ}[rll]
 T_2\opt{B}{S_b}{\sigma_1}&=T_2\Cont{S_b}B_b=\bpm I&0\\0&W\epm\bpm \Sigma_2 B_1\\-\sigma_1B_2\epm=\bpm \Sigma_2 B_1\\-\sigma_1\widehat{B}_2\\0_{d\times m}\epm&=\colon\bpm \wt{B}_1\\0_{d\times m}\epm\\
 \opt{C}{S_b}{\sigma_1}T_2^{-1}&=C_b\Obs{S_b}T_2^{-1}=\bpm C_1\Sigma_1&-\sigma_1C_2W^{\top}\epm\\
&=\bpm C_1\Sigma_1&-\sigma_1\widehat{C}_2&0_{p\times d}\epm&=\colon\bpm\wt{C}_1&0_{p\times d}\epm\\
 T_2\opt{E}{S_b}{\sigma_1}T_2^{-1}&=\bpm I&0\\0&W\epm\bpm \Gamma&0\\0&0\epm\bpm I&0\\0&W^{\top}\epm=\bpm \Gamma&0&0\\0&0&0\\0&0&0_{d\times d}\epm&=\colon\bpm \wt{E}_{11}&0\\0&0_{d\times d}\epm\\
 T_2\opt{A}{S_b}{\sigma_1}T_2^{-1}&=T_2\bpm -A_{11}^{\top}\Gamma-C_1^{\top}C_1\Sigma_1&\sigma_1C_1^{\top}C_2\\\sigma_1B_2B_1^{\top}&\sigma_1C_2^{\top}C_2\epm T_2^{-1}\\
&=\bpm -A_{11}^{\top}\Gamma-C_1^{\top}C_1\Sigma_1&\sigma_1C_1^{\top}\widehat{C}_2&0\\\sigma_1\widehat{B}_2B_1^{\top}&\sigma_1\widehat{C}_2^{\top}\widehat{C}_2&0\\0&0&0_{d\times d}\epm&=\colon\bpm \wt{A}_{11}&0\\0&0_{d\times d}\epm.
\end{EQ}
Since $\widehat{C}_2^{\top}\widehat{C}_2$ is regular, there exists $s\in\mb{C}$ such that the Schur complement
\[\left(sI+\left(A_{11}^{\top}+C_1^{\top}C_1\Sigma_1\Gamma^{-1}-\sigma_1C_1^{\top}\widehat{C}_2(\widehat{C}_2^{\top}\widehat{C}_2)^{-1}\widehat{B}_2B_1^{\top}\Gamma^{-1}\right)\right)\Gamma\]
of $s\wt{E}_{11}-\wt{A}_{11}$ in $-\sigma_1\widehat{C}_2^{\top}\widehat{C}_2$ is regular. \autoref{thm:schur} states that $s\wt{E}_{11}-\wt{A}_{11}$ is regular. Thus $(\wt{E}_{11},\wt{A}_{11})$ is regular. With $T:=T_2T_1^{-\top}$ the proof is finished.
\end{proof}

\begin{lem}\label{lem:Kkg2}
We set $S_1:=(\wt{E}_{11},\wt{A}_{11},\wt{B}_1,\wt{C}_1,D)$. For every $\omega\in\mb{R}$ with $\ie\omega\in\rho(\wt{E}_{11},\wt{A}_{11})$ the unequality
$\left\|\Fkt{S}(\ie\omega)-\Fkt{S_1}(\ie\omega)\right\|_2\leq \sigma_1$ 
holds. Moreover, $\Fkt{S_1}$ has no poles in $\mb{C}_{\geq0}$.
\end{lem}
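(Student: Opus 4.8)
The plan is to obtain $\Fkt{S_1}$ as the pointwise limit, as $\gamma\to\sigma_1+$, of the transfer functions $\Fkt{\opt{\mc{S}}{S}{\gamma}}$ supplied by \autoref{thm:causal3}, and to carry the bounds $\sigma_1\leq\|\Fkt S-\Fkt{\opt{\mc{S}}{S}{\gamma}}\|_\infty\leq\gamma$ over to the limit. The first task is a $\gamma$-parametrised refinement of \autoref{lem:singreg}. For $\gamma>\sigma_1$ the pencil $\penc{\opt{E}{S}{\gamma},\opt{A}{S}{\gamma}}$ is regular, so by \autoref{lem:ballsg} and \autoref{rem:transop}
\[\Fkt{\opt{\mc{S}}{S}{\gamma}}=\Fkt{\rse{TE^{-\top}}{\opt{\mc{S}}{S}{\gamma}}{T^{-1}}}=\Fkt{\rse{T_2}{\opt{\mc{S}}{S_b}{\gamma}}{T_2^{-1}}},\]
with $S_b,T_1,T_2,W,\widehat B_2,\widehat C_2,\Gamma$ exactly as in \autoref{lem:sing} and \autoref{lem:singreg} and $T=T_2T_1^{-\top}$. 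Rerunning the computations of those two lemmas with the parameter $\gamma$ left free, i.e.\ with $\Gamma_\gamma:=\Sigma_2\Sigma_1-\gamma^2I$ in place of $\Gamma$, one checks that $\rse{T_2}{\opt{\mc{S}}{S_b}{\gamma}}{T_2^{-1}}$ equals
\[\left(\bpm E_{11}(\gamma)&0\\0&(\sigma_1^2-\gamma^2)I_d\epm,\bpm A_{11}(\gamma)&(\sigma_1^2-\gamma^2)M_{12}\\(\sigma_1^2-\gamma^2)M_{21}&(\sigma_1^2-\gamma^2)M_{22}\epm,\bpm \wt B_1\\0\epm,\bpm \wt C_1&0\epm,D\right),\]
where $M_{12},M_{21},M_{22}$ are fixed matrices, $\wt B_1,\wt C_1$ are as in \autoref{lem:singreg}, and $E_{11}(\gamma)\to\wt E_{11}$, $A_{11}(\gamma)\to\wt A_{11}$ entrywise as $\gamma\to\sigma_1+$ (using $\Gamma_\gamma\to\Gamma$ and \eqref{eq:asg}). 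The crucial feature is that every coupling to the $d$ ``deflated'' states carries the scalar factor $\sigma_1^2-\gamma^2$, which tends to $0$.

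Next I would eliminate those $d$ states. For $s\notin\sigma(M_{22})$ the trailing block $(\sigma_1^2-\gamma^2)(sI_d-M_{22})$ is invertible, and since $\opt{\mc{S}}{S}{\gamma}\in\sys{n,p,m}^+$ the pencil is regular on $\mb{C}_{\geq0}$; solving the defining linear system (equivalently, \autoref{thm:schur}) then gives, for $s\in\mb{C}_{\geq0}\setminus\sigma(M_{22})$,
\[\Fkt{\opt{\mc{S}}{S}{\gamma}}(s)=D+\wt C_1\,R(s,\gamma)^{-1}\wt B_1,\qquad R(s,\gamma):=sE_{11}(\gamma)-A_{11}(\gamma)-(\sigma_1^2-\gamma^2)M_{12}(sI_d-M_{22})^{-1}M_{21}.\]
Since $R(s,\gamma)\to s\wt E_{11}-\wt A_{11}$ as $\gamma\to\sigma_1+$, for every $s\in\rho(\wt E_{11},\wt A_{11})\setminus\sigma(M_{22})$ we obtain $\Fkt{\opt{\mc{S}}{S}{\gamma}}(s)\to D+\wt C_1(s\wt E_{11}-\wt A_{11})^{-1}\wt B_1=\Fkt{S_1}(s)$.

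The two assertions now follow by limiting arguments. For the estimate: \autoref{thm:causal3} gives $\|\Fkt S(\ie\omega)-\Fkt{\opt{\mc{S}}{S}{\gamma}}(\ie\omega)\|_2\leq\gamma$ for every $\omega\in\mb{R}$, so letting $\gamma\to\sigma_1+$ yields $\|\Fkt S(\ie\omega)-\Fkt{S_1}(\ie\omega)\|_2\leq\sigma_1$ for all $\omega$ with $\ie\omega\in\rho(\wt E_{11},\wt A_{11})\setminus\sigma(M_{22})$; the finitely many remaining $\omega$ with $\ie\omega\in\rho(\wt E_{11},\wt A_{11})$ follow from continuity of $\Fkt S$ and $\Fkt{S_1}$ on $\rho(\wt E_{11},\wt A_{11})\cap\ie\mb{R}$. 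For the poles: $\|\Fkt S\|_\infty<\infty$ because $S\in\sys{n,p,m}^-$, hence by \autoref{thm:causal3} and the triangle inequality $\|\Fkt{\opt{\mc{S}}{S}{\gamma}}\|_\infty\leq\|\Fkt S\|_\infty+\gamma$; since $\Fkt{\opt{\mc{S}}{S}{\gamma}}$ is analytic on $\mb{C}_{\geq0}$ (because $\opt{\mc{S}}{S}{\gamma}\in\sys{n,p,m}^+$), the maximum modulus principle gives $\sup_{s\in\mb{C}_{\geq0}}\|\Fkt{\opt{\mc{S}}{S}{\gamma}}(s)\|_2=\|\Fkt{\opt{\mc{S}}{S}{\gamma}}\|_\infty\leq\|\Fkt S\|_\infty+\gamma$. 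Passing to the limit, $\|\Fkt{S_1}(s)\|_2\leq\|\Fkt S\|_\infty+\sigma_1$ for every $s\in\mb{C}_{\geq0}$ outside the finite set $\sigma(M_{22})\cup\sigma(\wt E_{11},\wt A_{11})$, so the rational function $\Fkt{S_1}$ can have no pole in $\mb{C}_{\geq0}$.

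The main obstacle is the first step: one must retrace the fairly delicate computations behind \autoref{lem:sing} and \autoref{lem:singreg} with $\gamma$ left free and verify that precisely the couplings to the deflated states acquire the factor $\sigma_1^2-\gamma^2$, while $E_{11}(\gamma)$ and $A_{11}(\gamma)$ converge to $\wt E_{11}$ and $\wt A_{11}$. A lesser nuisance is that the Schur-complement representation of $\Fkt{\opt{\mc{S}}{S}{\gamma}}$ is only literally valid off the finite set $\sigma(M_{22})$; but $\Fkt{\opt{\mc{S}}{S}{\gamma}}$ is in fact analytic on all of $\mb{C}_{\geq0}$ and $\Fkt S,\Fkt{S_1}$ are continuous on their domains, so discarding finitely many points is harmless.
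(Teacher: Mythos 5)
Your proposal is correct and follows essentially the same route as the paper's proof: both realize $\Fkt{\opt{\mc{S}}{S}{\gamma}}$ in the deflated coordinates of \autoref{lem:singreg}, observe that the coupling to the deflated states is scaled by $\sigma_1^2-\gamma^2$ (the paper gets this directly from $\opt{E}{S}{\gamma}=\opt{E}{S}{\sigma_1}+(\sigma_1^2-\gamma^2)E^{\top}$ and $\opt{A}{S}{\gamma}=\opt{A}{S}{\sigma_1}-(\sigma_1^2-\gamma^2)A^{\top}$), eliminate those states by a Schur complement, pass to the limit $\gamma\to\sigma_1+$, and rule out poles in $\mb{C}_{\geq0}$ via the uniform bound $\gamma+\|\Fkt{S}\|_\infty$ on the closed right half-plane. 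Your explicit continuity argument for the finitely many exceptional points $\ie\omega\in\sigma(M_{22})$ is a small tidying-up of a step the paper leaves implicit.
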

\begin{proof}
First we show that  $\Fkt{\opt{\mc{S}}{S}{\gamma}}(s)\xrightarrow{\gamma\to\sigma_1+}\Fkt{S_1}(s)$ for all $s\in\mb{C}_{\geq0}$ except for a finite number. Let $T$ be as in \autoref{lem:singreg}. We rewrite $\Fkt{\opt{\mc{S}}{S}{\gamma}}(s)$ for $\gamma>\sigma_1$ and $s\in\mb{C}_{\geq0}$ as
\begin{EQ}[rl]
 \Fkt{\opt{\mc{S}}{S}{\gamma}}(s)&=\Fkt{\rse{TE^{-\top}}{\opt{\mc{S}}{S}{\gamma}}{T^{-1}}}(s)=\bpm \wt{C}_1&0\epm\left(TE^{-\top}(s\opt{E}{S}{\gamma}-\opt{A}{S}{\gamma})T^{-1}\right)^{-1}\bpm\wt{B}_1\\0\epm+D\\
&\stackrel{(*)}{=}\bpm \wt{C}_1&0\epm\left(\bpm s\wt{E}_{11}-\wt{A}_{11}&0\\0&0\epm+(\sigma_1^2-\gamma^2)(sI+TE^{-\top}A^{\top}T^{-1})\right)^{-1}\bpm\wt{B}_1\\0\epm+D,
\end{EQ}
where $(*)$ holds with
\begin{EQ}[rl]
\opt{E}{S}{\gamma}&=\opt{E}{S}{\sigma_1}+\opt{E}{S}{\gamma}-\opt{E}{S}{\sigma_1}=\opt{E}{S}{\sigma_1}+(\sigma_1^2-\gamma^2)E^{\top},\\
\opt{A}{S}{\gamma}&=\opt{A}{S}{\sigma_1}-(\sigma_1^2-\gamma^2)A^{\top}.
\end{EQ}
We set $\bsm M_1&M_2\\M_3&M_4\esm:=TE^{-\top}A^{\top}T^{-1}$ with appropriate dimensioned $M_i$. In addition, let $s\notin\sigma(M_4)\cup\sigma(\wt{E}_{11},\wt{A}_{11})$. Then
\begin{EQ}
 L_\gamma:=s\wt{E}_{11}-\wt{A}_{11}+(\sigma_1^2-\gamma^2)(sI+M_1)-(\sigma_1^2-\gamma^2)M_2(sI-M_4)^{-1}M_3
\end{EQ}
is the Schur complement of
\begin{EQ}[rl]
 TE^{-\top}(s\opt{E}{S}{\gamma}-\opt{A}{S}{\gamma})T^{-1}=\bpm s\wt{E}_{11}-\wt{A}_{11}+(\sigma_1^2-\gamma^2)(sI+M_1)&(\sigma_1^2-\gamma^2)M_2\\(\sigma_1^2-\gamma^2)M_3&(\sigma_1^2-\gamma^2)(sI+M_4)\epm
\end{EQ}
in $(\sigma_1^2-\gamma^2)(sI+M_4)$. By \autoref{thm:schur} the matrix $L_\gamma$ is regular because $(s\opt{E}{S}{\gamma}-\opt{A}{S}{\gamma})$ is regular. The functions  $\gamma\mapsto L_\gamma$ and thus $\gamma\mapsto \Fkt{\opt{\mc{S}}{S}{\gamma}}(s)=\wt{C}_1L_\gamma^{-1}\wt{B}_1+D$ are continuous in $(\sigma_1,\infty)$. Note that $L_{\sigma_1}=s\wt{E}_{11}-\wt{A}_{11}$ is regular. We obtain
\[\Fkt{\opt{\mc{S}}{S}{\gamma}}(s)=\wt{C}_1L_\gamma^{-1}\wt{B}_1+D\xrightarrow{\gamma\to\sigma_1+} \wt{C}_1(s\wt{E}_{11}-\wt{A}_{11})^{-1}\wt{B}_1+D=\Fkt{S_1}(s).\]
Therefore for all $\ie\omega\notin\sigma(M_4)\cup\sigma(\wt{E}_{11},\wt{A}_{11})$
\[\left\|\Fkt{S}(\ie\omega)-\Fkt{S_1}(\ie\omega)\right\|_2=\lim\limits_{\gamma\to\sigma_1+}\left\|\Fkt{S}(\ie\omega)-\Fkt{S_{S,\gamma}}(\ie\omega)\right\|_2\leq\lim\limits_{\gamma\to\sigma_1+}\gamma=\sigma_1.\]

Now we show that $\Fkt{S_1}$ has no poles in $\mb{C}_{\geq0}$. By \cite[Section 2.3]{Francis1987} we have $\left\|G(s)\right\|_2\leq\left\|G\right\|_\infty$ for all $G\in RH_\infty$ and $s\in\mb{C}_{\geq0}$. Thus we get the upper estimate
\begin{EQ}\label{eq:est}
 \left\|\Fkt{\opt{\mc{S}}{S}{\gamma}}(s)\right\|_2\leq \left\|\Fkt{\opt{\mc{S}}{S}{\gamma}}(s)-\Fkt{S}(s)\right\|_2+\left\|\Fkt{S}(s)\right\|_2 \leq \gamma+\left\|\Fkt{S}\right\|_\infty.
\end{EQ}
for all $s\in\mb{C}_{\geq0}$. 
Hence for all $s\in\mb{C}_{\geq0}\backslash(\sigma(\wt{E}_{11},\wt{A}_{11})\cup\sigma(M_4))$ we have
 \[\left\|\Fkt{S_1}(s)\right\|_2=\lim\limits_{\gamma\to\sigma_1+}\left\|\Fkt{\opt{\mc{S}}{S}{\gamma}}(s)\right\|_2\leq \lim\limits_{\gamma\to\sigma_1+}(\gamma+\left\|\Fkt{S}\right\|_\infty)=\sigma_1+\left\|\Fkt{S}\right\|_\infty\,.\]
Thus $\Fkt{S_1}$ has no poles in $\mb{C}_{\geq0}$. 
\end{proof}


\begin{thm}\label{thm:sing1}
The system $S_1$ given in \autoref{lem:Kkg2} solves \hyperref[p_p2]{(AP$_\infty$)}.
\end{thm}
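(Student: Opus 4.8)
We want to show that $S_1=(\wt{E}_{11},\wt{A}_{11},\wt{B}_1,\wt{C}_1,D)$ from \autoref{lem:Kkg2} actually solves \hyperref[p_p2]{(AP$_\infty$)}, i.e.\ that $\Fkt{S_1}$ is an \emph{optimal} stable approximation of $\Fkt{S}$ with $S\in\sys{n,p,m}^-$. The two things to establish are: (1) $S_1\in\sys{\wt{n}_{11},p,m}^+$, i.e.\ $S_1$ is a genuine stable system; and (2) $\|\Fkt{S}-\Fkt{S_1}\|_\infty=\sigma_1$, which is simultaneously an upper bound (optimality of the value) and the matching lower bound (no stable system can do better).

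\textbf{Step 1 (stability of $S_1$).} By construction in \autoref{lem:singreg}, $(\wt{E}_{11},\wt{A}_{11})$ is already regular, so $S_1\in\sys{\wt{n}_{11},p,m}$ and $\Fkt{S_1}$ is well defined on $\rho(\wt{E}_{11},\wt{A}_{11})$. \autoref{lem:Kkg2} tells us $\Fkt{S_1}$ has no poles in $\mb{C}_{\geq0}$; in particular $\ie\mb{R}\subseteq\dom(\Fkt{S_1})$. Now I would invoke \autoref{thm:norm}: the bound $\|\Fkt{S_1}(\ie\omega)\|_2\le\|\Fkt{S}(\ie\omega)-\Fkt{S_1}(\ie\omega)\|_2+\|\Fkt{S}(\ie\omega)\|_2\le\sigma_1+\|\Fkt{S}\|_\infty$ from \autoref{lem:Kkg2}, valid for all but finitely many $\ie\omega$, shows $\|\Fkt{S_1}\|_\infty<\infty$ (the finitely many excluded frequencies are not poles, so by continuity the bound extends). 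Hence $\Fkt{S_1}\in RL_\infty$, and since it is analytic on $\mb{C}_{>0}$ (no poles there), $\Fkt{S_1}\in RH_\infty$. By \autoref{thm:norm}(b), $\Fkt{S_1}$ admits a realization in $\bigcup_{\widehat{n}}\sys{\widehat{n},p,m}^+$, which is exactly what \hyperref[p_p2]{(AP$_\infty$)} requires of the approximant.

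\textbf{Step 2 (the value).} For the upper bound: \autoref{lem:Kkg2} gives $\|\Fkt{S}(\ie\omega)-\Fkt{S_1}(\ie\omega)\|_2\le\sigma_1$ for all $\ie\omega$ outside a finite set; since $\Fkt{S}-\Fkt{S_1}$ has no poles on $\ie\mb{R}$ (neither summand does), continuity extends this to all of $\ie\mb{R}$, so $\|\Fkt{S}-\Fkt{S_1}\|_\infty\le\sigma_1$. For the lower bound I would use \autoref{thm:enehari}: pass to a minimal realization $S_M\sim\cdots$ of $\Fkt{S}$, balance it (\autoref{thm:bal}), and read off that $\sigma_1=\sqrt{\max\sigma(\Cont{S}\Obs{S})}$ is the largest Hankel singular value, independent of the realization by \autoref{lem:hank}; \autoref{thm:enehari} states $\inf_{\widehat{S}\in\bigcup\sys{\widehat n,p,m}^+}\|\Fkt{S}-\Fkt{\widehat S}\|_\infty=\sigma_1$. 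Combining the two bounds gives $\|\Fkt{S}-\Fkt{S_1}\|_\infty=\sigma_1$ and, since this equals the infimum, $S_1$ is optimal.

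\textbf{Main obstacle.} The delicate point is making the limiting argument of \autoref{lem:Kkg2} do exactly the work needed here without circularity: \autoref{lem:Kkg2} only asserts the norm bound and pole-freeness for $S_1$, derived as a $\gamma\to\sigma_1+$ limit of the suboptimal $\opt{\mc{S}}{S}{\gamma}$ from \autoref{thm:causal3}. I must be careful that the finitely many frequencies excluded in \autoref{lem:Kkg2} (those in $\sigma(M_4)\cup\sigma(\wt E_{11},\wt A_{11})$) really are removable, i.e.\ not genuine poles of $\Fkt{S}-\Fkt{S_1}$ — this follows because pole-freeness on $\mb{C}_{\geq0}$ was established separately in \autoref{lem:Kkg2}, so the $L_\infty$-norm is the supremum over the cofinite set where the estimate holds. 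The rest is assembling \autoref{thm:norm}, \autoref{thm:enehari}, and \autoref{lem:hank}; the only conceptual subtlety is that optimality over \emph{all} stable realizations (not just minimal ones) requires \autoref{lem:hank} to pin down $\sigma_1$ realization-independently, which is precisely why that lemma was proved.
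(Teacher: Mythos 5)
Your Step 2 and the assembly of the upper/lower bounds are fine and match the paper: the pointwise estimate of \autoref{lem:Kkg2} on a cofinite subset of $\ie\mb{R}$ extends by continuity (no imaginary poles), and the lower bound $\sigma_1$ comes from the Hankel-norm theory (\autoref{thm:enehari}) made realization-independent by \autoref{lem:hank}. The genuine gap is in Step 1. The theorem asserts that \emph{the system} $S_1=(\wt{E}_{11},\wt{A}_{11},\wt{B}_1,\wt{C}_1,D)$ solves (AP$_\infty$), which by the definition of the problem requires $S_1\in\sys{n-d,p,m}^+$, i.e.\ $\mb{C}_{\geq0}\subseteq\rho(\wt{E}_{11},\wt{A}_{11})$. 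You only establish that $\Fkt{S_1}$ has no poles in $\mb{C}_{\geq0}$ and is bounded, and then invoke \autoref{thm:norm}(b) to produce \emph{some other} stable standard realization of $\Fkt{S_1}$. That weaker statement does not exclude eigenvalues of the pair $\penc{\wt{E}_{11},\wt{A}_{11}}$ in $\mb{C}_{\geq0}$ that are cancelled by uncontrollability or unobservability and hence invisible as poles (cf.\ \autoref{lem:pol2}); it proves that the approximation problem is solvable, not that $S_1$ solves it. The paper flags exactly this distinction as one of its main tasks ("\ldots to prove that the optimal solution lies indeed in $\sys{n,p,m}^+$, since [Safonov--Chiang] only considers the poles of the resulting transfer function"), and the conclusion $\sigma(\wt{E}_{11},\wt{A}_{11})\subseteq\mb{C}_{<0}\cup\{\infty\}$ is also used downstream in \autoref{thm:sing3}, so replacing $S_1$ by an abstract stable realization would break the explicit algorithm.

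What is missing is therefore the entire body of the paper's proof: fix $s_0\in\mb{C}_{\geq0}$; since $s_0$ is not a pole of $\Fkt{S_1}$ (\autoref{lem:Kkg2}), \autoref{lem:pol2} reduces $s_0\notin\sigma(\wt{E}_{11},\wt{A}_{11})$ to the two rank conditions. These are verified by an explicit kernel computation exploiting the balanced structure from \autoref{lem:singreg}: if $(s_0\wt{E}_{11}-\wt{A}_{11})x=0$ and $\wt{C}_1x=0$, then using $\opt{A}{S_b}{\sigma_1}=-A_b^{\top}\opt{E}{S_b}{\sigma_1}-C_b^{\top}\opt{C}{S_b}{\sigma_1}$ and $s_0\notin\sigma(-A_b^{\top})$ one gets $\opt{E}{S_b}{\sigma_1}T_2^{-1}\bsm x\\0\esm=0$, hence $x_1=0$ by regularity of $\Gamma$ and then $x_2=0$ by regularity of $\widehat{C}_2^{\top}\widehat{C}_2$; the dual argument with $\wt{B}_1^{\top}$ is analogous. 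Without this (or an equivalent) argument the theorem as stated is not proved.
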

\begin{proof}
We use the notations of the proof of \autoref{lem:singreg}. By \autoref{thm:causal2} and \autoref{lem:Kkg2} it remains to prove that $\sigma(\wt{E}_{11},\wt{A}_{11})\subseteq\mb{C}_{<0}\cup\{\infty\}$. Let $s_0\in\mb{C}_{\geq0}$. We apply \autoref{lem:pol2} because $s_0$ is not a pole  of $\Fkt{S_1}$. Let $x=\bpm x_1\\x_2\epm$ satisfy $0=\wt{C}_1x$ and
\begin{EQ}[rl]\label{eq:c}
0&=(s_0\wt{E}_{11}-\wt{A}_{11})x=\bpm s_0\Gamma+A_{11}^{\top}\Gamma+C_1^{\top}C_1\Sigma_1&-\sigma_1C_1^{\top}\widehat{C}_2\\-\sigma_1\widehat{B}_2B_1^{\top}&-\sigma_1\widehat{C}_2^{\top}\widehat{C}_2\epm\bpm x_1\\x_2\epm.
\end{EQ}
Hence
\begin{EQA}[rl]
0&=\bpm \wt{C}_1&0\epm\bpm x\\0\epm=\opt{C}{S_b}{\sigma_1}T_2^{-1}\bpm x\\0\epm, \label{eq:c2}\\ 
0&=\bpm s_0\wt{E}_{11}-\wt{A}_{11}&0\\0&0\epm\bpm x\\0\epm=T_2(s_0\opt{E}{S_b}{\sigma_1}-\opt{A}{S_b}{\sigma_1})T_2^{-1}\bpm x\\0\epm\\
&=T_2(s_0\opt{E}{S_b}{\sigma_1}+A_b^{\top}\opt{E}{S_b}{\sigma_1}+C_b^{\top}\opt{C}{S_b}{\sigma_1})T_2^{-1}\bpm x\\0\epm\overset{\eqref{eq:c2}}{=}T_2(s_0I+A_b^{\top})\opt{E}{S_b}{\sigma_1}T_2^{-1}\bpm x\\0\epm.
\end{EQA}
Due to $S_b\sim S\in\sys{n,p,m}^-$ we have $s_0\notin\sigma(-A_b^{\top})$ and the equation
\[0=\opt{E}{S_b}{\sigma_1}T_2^{-1}\bpm x\\0\epm=T_2\opt{E}{S_b}{\sigma_1}T_2^{-1}\bpm x\\0\epm=\bpm \Gamma&0&0\\0&0&0\\0&0&0\epm\bpm x_1\\x_2\\0\epm\]
follows. Thus we get $x_1=0$ by the regularity of $\Gamma$. Now we deduce 
\[\eqref{eq:c}\Rightarrow \sigma_1\widehat{C}_2^{\top}\widehat{C}_2x_2=0\Rightarrow x_2=0\]
from the regularity of $\widehat{C}_2^{\top}\widehat{C}_2$. Thus $x=0$. Analogously every $x\in\mb{R}^{n-d}$ which fulfils $0=(s_0\wt{E}_{11}-\wt{A}_{11})^{\top}x$ and $0=\wt{B}_1^{\top}x$ equals zero. Thus $s_0\notin\sigma(\wt{E}_{11},\wt{A}_{11})$ and $\sigma(\wt{E}_{11},\wt{A}_{11})\subseteq\mb{C}_{<0}\cup\{\infty\}$.
\end{proof}

The optimal solution given in the theorem above is still based on the balanced realization. Our second main contribution is the following result, which only needs one singular value decomposition.

\begin{thm}\label{thm:sing3}
Let $S=(E,A,B,C,D)\in\sys{n,p,m}^-$. Suppose $\penc{\opt{E}{S}{\sigma_1},\opt{A}{S}{\sigma_1}}$ given in \autoref{thm:causal3} is singular. Then there exist orthogonal $U,V\in\mb{R}^{n\times n}$ with
\begin{EQ}\label{eq:sing22}
 U\opt{A}{S}{\sigma_1}V=\bpm \widehat{A}_{11}&\widehat{A}_{12}\\0&0\epm\text{and regular }\widehat{A}_{11}\,.
\end{EQ}
Every regular $U,V\in\mb{R}^{n\times n}$ satisfying \eqref{eq:sing22}, fulfil
\begin{align*}
 U\opt{E}{S}{\sigma_1}V=\bpm \widehat{E}_{11}&\widehat{E}_{12}\\0&0\epm,&&U\opt{B}{S}{\sigma_1}=\bpm \widehat{B}_{1}\\0\epm,&&\opt{C}{S}{\sigma_1}V=\bpm \widehat{C}_{1}&\widehat{C}_{2}\epm.
\end{align*}

The system $S_2:=\left(\widehat{E}_{11},\widehat{A}_{11},\widehat{B}_1,\widehat{C}_1,D\right)$ solves \hyperref[p_p2]{(AP$_\infty$)}.
\end{thm}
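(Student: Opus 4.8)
The plan is to reduce everything to the already-established \autoref{thm:sing1} by showing that the system $S_2$ constructed here via a single singular value decomposition is restricted system equivalent to the system $S_1$ of \autoref{lem:Kkg2} — or at least that $\Fkt{S_2}=\Fkt{S_1}$, which is all we need since \hyperref[p_p2]{(AP$_\infty$)} is a statement about transfer functions. First I would establish the existence of orthogonal $U,V$ with the form in \eqref{eq:sing22}: since $\penc{\opt{E}{S}{\sigma_1},\opt{A}{S}{\sigma_1}}$ is singular, \autoref{c_singreg} gives that $\opt{A}{S}{\sigma_1}$ is itself singular, say of rank $n-d$ with $d>0$; a rank-revealing decomposition (SVD, or a QR-type factorization on both sides) then produces orthogonal $U,V$ with $U\opt{A}{S}{\sigma_1}V$ having its last $d$ rows zero and a regular leading block $\widehat{A}_{11}\in\mb{R}^{(n-d)\times(n-d)}$. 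I would actually state and use it for general \emph{regular} $U,V$ as the theorem does, the orthogonal case being a special case.

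Next I would prove the block structure claims for $\opt{E}{S}{\sigma_1}$, $\opt{B}{S}{\sigma_1}$, $\opt{C}{S}{\sigma_1}$. The key algebraic fact, which I would extract from \eqref{eq:asg} (valid after passing to $\rser{E^{-1}}{S}$ as in the proof of \autoref{thm:causal3}), is that $\opt{A}{S}{\sigma_1}=-A^{\top}\opt{E}{S}{\sigma_1}-C^{\top}\opt{C}{S}{\sigma_1}=-\opt{E}{S}{\sigma_1}A^{\top}-\opt{B}{S}{\sigma_1}B^{\top}$ (up to the $E$-transformation bookkeeping). From the first identity, the last $d$ rows of $U\opt{A}{S}{\sigma_1}V$ vanish; writing $UA^{\top}U^{-1}$ and $UC^{\top}$ in block form and using that $\opt{C}{S}{\sigma_1}V$ has the asserted shape, I would argue the last $d$ rows of $U\opt{E}{S}{\sigma_1}V$ vanish. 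Symmetrically, from $\opt{A}{S}{\sigma_1}=-\opt{E}{S}{\sigma_1}A^{\top}-\opt{B}{S}{\sigma_1}B^{\top}$, the last $d$ columns on the $V$-side are controlled: the last $d$ rows of $U\opt{B}{S}{\sigma_1}$ vanish. The cleanest route to all of this is probably to run the argument in the balanced coordinates of \autoref{lem:sing}, where $\opt{\mc{S}}{S_b}{\sigma_1}$ has the explicit form displayed there, and transport via \autoref{lem:ballsg}; in those coordinates the zero structure is manifest, and an orthogonal change of basis within the kernel directions (as in the proof of \autoref{lem:singreg}) realizes the shape \eqref{eq:sing22}.

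Then I would identify $S_2$ with $S_1$. In the balanced coordinates, \autoref{lem:singreg} already produced the deflated regular system $(\wt E_{11},\wt A_{11},\wt B_1,\wt C_1,D)=S_1$ by splitting off the $d$-dimensional singular kernel. The orthogonal $U,V$ here accomplish the same deflation up to a regular change of basis on the $(n-d)$-dimensional block: passing between the two amounts to multiplying $\widehat{A}_{11}$, $\widehat{E}_{11}$ on the left by one regular matrix and on the right by another, and $\widehat{B}_1$, $\widehat{C}_1$ correspondingly, which leaves the transfer function unchanged by \autoref{rem:transop}. Hence $\Fkt{S_2}=\Fkt{S_1}$, and since $S_1$ solves \hyperref[p_p2]{(AP$_\infty$)} by \autoref{thm:sing1}, so does $S_2$; in particular $S_2\in\bigcup_{\widehat n}\sys{\widehat n,p,m}^+$ because $\sigma(\widehat{E}_{11},\widehat{A}_{11})=\sigma(\wt E_{11},\wt A_{11})\subseteq\mb{C}_{<0}\cup\{\infty\}$.

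The main obstacle I anticipate is the \emph{coordinate-free} verification of the block structure — i.e.\ showing directly that \emph{any} regular $U,V$ bringing $\opt{A}{S}{\sigma_1}$ to the form \eqref{eq:sing22} automatically forces the last $d$ rows of $U\opt{E}{S}{\sigma_1}V$ and $U\opt{B}{S}{\sigma_1}$ to vanish and the last $d$ columns to collect into $\widehat{C}_2$. This is where one genuinely needs the two-sided identities \eqref{eq:asg} together with the rank-$d$ information about the common kernel coming from \autoref{lem:sing}(c) and the symmetry $\ker(B_2^{\top})=\ker(C_2)$; handling it for general regular (not just orthogonal) $U,V$, and checking that the deflated $(\widehat{E}_{11},\widehat{A}_{11})$ pair is regular (via a Schur-complement argument as in \autoref{lem:sing}), is the technical heart of the proof.
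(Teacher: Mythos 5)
Your overall architecture matches the paper's: obtain $U,V$ from a rank-revealing factorization of $\opt{A}{S}{\sigma_1}$ (singular by \autoref{c_singreg}), show the zero block structure propagates to $\opt{E}{S}{\sigma_1}$ and $\opt{B}{S}{\sigma_1}$, identify $\Fkt{S_2}$ with $\Fkt{S_1}$, and invoke \autoref{thm:sing1}. However, the step you yourself flag as ``the technical heart'' --- that \emph{every} regular $U,V$ satisfying \eqref{eq:sing22} automatically force the last $d$ rows of $U\opt{E}{S}{\sigma_1}V$ and of $U\opt{B}{S}{\sigma_1}$ to vanish --- is precisely what distinguishes this theorem from \autoref{lem:singreg}, and your proposal does not prove it. The tools you name for it would not suffice as stated: from $\opt{A}{S}{\sigma_1}=-A^{\top}E^{-\top}\opt{E}{S}{\sigma_1}-C^{\top}\opt{C}{S}{\sigma_1}$ a left null vector $u^{\top}$ of $\opt{A}{S}{\sigma_1}$ only yields $u^{\top}A^{\top}E^{-\top}\opt{E}{S}{\sigma_1}=-u^{\top}C^{\top}\opt{C}{S}{\sigma_1}$, which does not give $u^{\top}\opt{E}{S}{\sigma_1}=0$; and \autoref{lem:sing}(c) only delivers $d>0$, not the coincidence of column spaces that is actually needed.

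The ingredient that closes this is the simultaneous deflation of \autoref{lem:singreg} itself. Writing $s\opt{E}{S}{\sigma_1}-\opt{A}{S}{\sigma_1}=E^{\top}T^{-1}\bsm s\wt{E}_{11}-\wt{A}_{11}&0\\0&0\esm T$, the paper notes that the column space of $U(s\opt{E}{S}{\sigma_1}-\opt{A}{S}{\sigma_1})V$ equals $\bigl(UE^{\top}T^{-1}\bigr)\bigl[\mb{R}^{n-d}\times\left\{0\right\}^{d}\bigr]$ for every $s\in\mb{C}_{\geq0}$ (since $s\wt{E}_{11}-\wt{A}_{11}$ is regular there), hence is independent of $s$; evaluating at $s=0$ and using the assumed form of $U\opt{A}{S}{\sigma_1}V$ with $\widehat{A}_{11}$ regular pins this space down as $\mb{R}^{n-d}\times\left\{0\right\}^{d}$, which kills the last $d$ rows of $U\opt{E}{S}{\sigma_1}V$ and, because $\opt{B}{S}{\sigma_1}=E^{\top}T^{-1}\bsm\wt{B}_1\\0\esm$, of $U\opt{B}{S}{\sigma_1}$ as well. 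Equivalently, partition $P:=UE^{\top}T^{-1}$ and $Q:=TV$ into blocks; the hypothesis $U\opt{A}{S}{\sigma_1}V=P\bsm \wt{A}_{11}&0\\0&0\esm Q$ with regular $\widehat{A}_{11}$ forces $P_{21}=0$ and then $S_2=\rse{P_{11}}{S_1}{Q_{11}}$ with $P_{11},Q_{11}$ regular --- this would make your ``regular change of basis on the $(n-d)$-dimensional block'' remark precise and yield $\Fkt{S_2}=\Fkt{S_1}$, the block structure, and the regularity of $(\widehat{E}_{11},\widehat{A}_{11})$ all at once (the paper instead verifies $\Fkt{S_2}=\Fkt{S_1}$ by a separate direct computation). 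Without some version of this argument your proposal remains a plan rather than a proof of the theorem's central claim.
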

\begin{proof}
Let $T\in\mb{R}^{n\times n}$ be as in \autoref{lem:singreg}. Then we obtain
\[\penc{TE^{-\top}\opt{E}{S}{\sigma_1}T^{-1},TE^{-\top}\opt{A}{S}{\sigma_1}T^{-1}}=\penc{\bpm \wt{E}_{11}&0\\0&0\epm,\bpm \wt{A}_{11}&0\\0&0\epm}\]
with $\wt{A}_{11}\in\mb{R}^{(n-d)\times (n-d)}$ and $\sigma(\wt{E}_{11},\wt{A}_{11})\subseteq\mb{C}_{<0}\cup\{\infty\}$. In particular, $\wt{A}_{11}$ is regular. The matrices $\opt{A}{S}{\sigma_1}$ and $TE^{-\top}\opt{A}{S}{\sigma_1}T^{-1}$ have the same rank. By singular value decomposition there exist regular orthogonal $U,V\in\mb{R}^{n\times n}$ such that
\begin{EQ}
 U\opt{A}{S}{\sigma_1}V=\bpm \widehat{A}_{11}&\widehat{A}_{12}\\0&0\epm \text{with regular } \widehat{A}_{11}\in\mb{R}^{(n-d)\times (n-d)}\,.
\end{EQ}
Let $U,V\in\mb{R}^{n\times n}$ be regular and satisfy \eqref{eq:sing22}. We define
\begin{align*}
 U\opt{E}{S}{\sigma_1}V=\colon\bpm \widehat{E}_{11}&\widehat{E}_{12}\\\widehat{E}_{21}&\widehat{E}_{22}\epm,&&U\opt{B}{S}{\sigma_1}=\colon\bpm \widehat{B}_{1}\\\widehat{B}_{2}\epm,&&\opt{C}{S}{\sigma_1}V=\colon\bpm \widehat{C}_1&\widehat{C}_2\epm.
\end{align*}

For all $s\in\mb{C}_{\geq0}$ the matrices $(s\widehat{E}_{11}-\widehat{A}_{11})$ and $(s\wt{E}_{11}-\wt{A}_{11})$ are regular. Thus we have
\begin{EQA}[rl]
\bpm s\widehat{E}_{11}-\widehat{A}_{11}&s\widehat{E}_{12}-\widehat{A}_{12}\\s\widehat{E}_{21}&s\widehat{E}_{22}\epm\left[\mb{R}^n\right]&=\left(UE^\top T^{-1}\bpm s\wt{E}_{11}-\wt{A}_{11}&0\\0&0\epm TV\right)\left[\mb{R}^n\right]\\
&\overset{(*)}{=}\left(UE^\top T^{-1}\right)\left[\mb{R}^{n-d}\times\left\{0\right\}^{d}\right]=\mb{R}^{n-d}\times\left\{0\right\}^{d}.~~~~~\label{eq:pro}
\end{EQA}
We obtain the last equality because $(*)$ holds for all $s\in\mb{C}_{\geq0}$ and in particular for $s=0$. That implies $\widehat{E}_{21}=0$, $\widehat{E}_{22}=0$ and finally $\sigma(\widehat{E}_{11},\widehat{A}_{11})=\sigma(\wt{E}_{11},\wt{A}_{11})$ which means $S_2\in\sys{n-d,p,m}^+$. Furthermore, we have $\widehat{B}_{2}=0$ because
\[\bpm \widehat{B}_{1}\\\widehat{B}_{2}\epm\left[\mb{R}^m\right]=\left(UE^{\top}T^{-1}\bpm \wt{B}_1\\0\epm\right)\left[\mb{R}^m\right]\overset{\eqref{eq:pro}}{=}\mb{R}^{n-d}\times\left\{0\right\}^{d}\,.\]

Finally, we show that $\Fkt{S_2}=\Fkt{S_1}$. Let $s\in\rho(\widehat{E}_{11},\widehat{A}_{11})$ and $Y_1:=\wt{C}_1(s\wt{E}_{11}-\wt{A}_{11})^{-1}$. We conclude with $Y:=\bpm Y_1&0\epm\in\mb{R}^{p\times n}$ and $X:=\bpm X_1&X_2\epm:=YTE^{-\top}U^{-1}$ that
\begin{EQ}[rl]
\bpm \widehat{C}_1&\widehat{C}_2\epm&=\bpm \wt{C}_1&0\epm TV=Y\bpm s\wt{E}_{11}-\wt{A}_{11}&0\\0&0\epm TV=X\bpm s\widehat{E}_{11}-\widehat{A}_{11}&s\widehat{E}_{12}-\widehat{A}_{12}\\0&0\epm.
\end{EQ}
In particular $X_1=\widehat{C}_1(s\widehat{E}_{11}-\widehat{A}_{11})^{-1}$. That implies
\begin{EQ}[rl]
 \Fkt{S_1}(s)&=\wt{C}_1(s\wt{E}_{11}-\wt{A}_{11})^{-1}\wt{B}_1=Y_1\wt{B}_1=\bpm Y_1&Y_2\epm\bpm\wt{B}_1\\0\epm=\bpm Y_1&Y_2\epm TE^{-\top}U^{-1}\bpm\widehat{B}_1\\0\epm\\
&=\bpm X_1&X_2\epm\bpm\widehat{B}_1\\0\epm=X_1\widehat{B}_1=\widehat{C}_1(s\widehat{E}_{11}-\widehat{A}_{11})^{-1}\widehat{B}_1=\Fkt{S_2}(s)\,.
\end{EQ}
By \autoref{thm:sing1} we get $\left\|\Fkt{S}-\Fkt{S_2}\right\|_\infty=\sigma_1.$
\end{proof}

Under the additional condition $E=I$ we can use the less complex Schur decomposition to attain \eqref{eq:sing22} in \autoref{thm:sing3}.
\begin{lem}
In the situation of \autoref{thm:sing3}, where $E=I$, we obtain regular matrices $U$ and $V:=U^{\top}$, such that \eqref{eq:sing22} holds, by applying the Schur decomposition to $\opt{A}{S}{\sigma_1}$. 
\end{lem}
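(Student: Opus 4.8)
The plan is to exploit the fact that $\opt{A}{S}{\sigma_1}$ is a \emph{symmetric} matrix, so that the Schur decomposition already puts it in the block-triangular form required by \eqref{eq:sing22} with $V=U^{\top}$, and in fact forces $\widehat A_{12}=0$ as well. First I would verify the symmetry: with $E=I$ equation \eqref{eq:asg} gives $\opt{A}{S}{\sigma_1}=\sigma_1^2A^{\top}+\Obs{S}A\Cont{S}$, and since $\Cont{S}$, $\Obs{S}$ are symmetric (\autoref{thm:lyap}) one checks $\opt{A}{S}{\sigma_1}^{\top}=\sigma_1^2A+\Cont{S}A^{\top}\Obs{S}$; this is seen to equal $\opt{A}{S}{\sigma_1}$ by using the two Lyapunov equations \eqref{eq:lyap} to rewrite $\Obs{S}A\Cont{S}$ (add $A^{\top}\Obs{S}\Cont{S}+C^{\top}C\Cont{S}=0$ and $\Obs{S}\Cont{S}A^{\top}+\Obs{S}BB^{\top}=0$ exactly as in the displayed computation following \eqref{eq:asg}, where the \emph{same} manipulation shows $-A^{\top}\opt{R}{S}{\sigma_1}-C^{\top}\opt{C}{S}{\sigma_1}=-\opt{R}{S}{\sigma_1}A^{\top}-\opt{B}{S}{\sigma_1}B^{\top}$; transposing this identity is precisely the statement $\opt{A}{S}{\sigma_1}^{\top}=\opt{A}{S}{\sigma_1}$).

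Next, since $\opt{A}{S}{\sigma_1}$ is real symmetric, the real Schur decomposition produces an orthogonal $U$ with $U\opt{A}{S}{\sigma_1}U^{\top}$ diagonal; grouping the zero eigenvalues last, this is $\bsm \widehat A_{11}&0\\0&0\esm$ with $\widehat A_{11}$ diagonal and nonsingular, its size equal to $\rank \opt{A}{S}{\sigma_1}=n-d$ where $d=\dim\ker C_2$ as in \autoref{lem:sing}. Setting $V:=U^{\top}$ we obtain $U\opt{A}{S}{\sigma_1}V=\bsm \widehat A_{11}&\widehat A_{12}\\0&0\esm$ with $\widehat A_{12}=0$, which is a special case of the form \eqref{eq:sing22} demanded by \autoref{thm:sing3}. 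Hence these $U,V$ are admissible input to that theorem, and $S_2=(\widehat E_{11},\widehat A_{11},\widehat B_1,\widehat C_1,D)$ built from them solves \hyperref[p_p2]{(AP$_\infty$)}.

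The main point to be careful about — really the only non-routine step — is establishing the symmetry of $\opt{A}{S}{\sigma_1}$, since $\opt{E}{S}{\sigma_1}$ and the off-diagonal data are \emph{not} symmetric and play no role here. Everything else is bookkeeping: that $U$ from the Schur form is orthogonal (hence regular) and that $V=U^{\top}$ is orthogonal, that the rank/zero-eigenvalue count matches the $d$ from \autoref{lem:sing} so the block sizes are the ones \autoref{thm:sing3} expects, and that \eqref{eq:sing22} is literally satisfied (with the extra bonus $\widehat A_{12}=0$). I would then simply invoke \autoref{thm:sing3} to conclude. The advantage to emphasize is computational: the symmetric Schur decomposition of a single $n\times n$ matrix replaces the general (unsymmetric) SVD, so this is the cheaper route whenever $E=I$.
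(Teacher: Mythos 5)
Your argument hinges on the claim that $\opt{A}{S}{\sigma_1}$ is symmetric, and that claim is false in general; this is a genuine gap, not a cosmetic one. Transposing the identity $-A^{\top}\opt{R}{S}{\gamma}-C^{\top}\opt{C}{S}{\gamma}=-\opt{R}{S}{\gamma}A^{\top}-\opt{B}{S}{\gamma}B^{\top}$ does not return the same identity: it yields $-\opt{R}{S}{\gamma}^{\top}A-\opt{C}{S}{\gamma}^{\top}C=-A\opt{R}{S}{\gamma}^{\top}-B\opt{B}{S}{\gamma}^{\top}$, and $\opt{R}{S}{\gamma}^{\top}=\Cont{S}\Obs{S}-\gamma^{2}I$ differs in general from $\opt{R}{S}{\gamma}=\Obs{S}\Cont{S}-\gamma^{2}I$, because a product of two symmetric matrices need not be symmetric. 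Equivalently, \eqref{eq:asg} gives $\opt{A}{S}{\gamma}^{\top}=\gamma^{2}A+\Cont{S}A^{\top}\Obs{S}$, which is the matrix produced by the same construction applied to the dual system $(I,A^{\top},C^{\top},B^{\top},D^{\top})$, not $\opt{A}{S}{\gamma}=\gamma^{2}A^{\top}+\Obs{S}A\Cont{S}$ itself. (In the balanced coordinates of \autoref{lem:sing} the off-diagonal blocks of $\opt{A}{S_b}{\sigma_1}$ are $\sigma_1C_1^{\top}C_2$ and $\sigma_1B_2B_1^{\top}$, which are not transposes of one another in general; and $\opt{A}{S}{\sigma_1}$ is obtained from $\opt{A}{S_b}{\sigma_1}$ by a similarity, not a congruence, so symmetry would not transfer anyway.) Without symmetry you cannot orthogonally diagonalize, the bonus $\widehat{A}_{12}=0$ evaporates, and --- more importantly --- nothing in your argument shows that the Schur form has a \emph{zero} $(2,2)$ block rather than a nontrivial nilpotent one, nor that the number of zero eigenvalues equals $n-\rank\opt{A}{S}{\sigma_1}$; for a nonsymmetric matrix neither is automatic.

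The ingredient that actually makes the Schur decomposition work is \autoref{lem:singreg}: with $E=I$ the matrix $T$ constructed there provides the similarity $T\opt{A}{S}{\sigma_1}T^{-1}=\bsm \wt{A}_{11}&0\\0&0\esm$ with $\wt{A}_{11}$ regular of size $n-d$. Hence $0$ is an eigenvalue of $\opt{A}{S}{\sigma_1}$ of algebraic multiplicity exactly $d$ with no nontrivial Jordan structure, and $\rank\opt{A}{S}{\sigma_1}=n-d$. Ordering the real Schur form so that the zero eigenvalues come last gives $U\opt{A}{S}{\sigma_1}U^{\top}=\bsm \widehat{A}_{11}&\widehat{A}_{12}\\0&\widehat{A}_{22}\esm$ with $\widehat{A}_{11}$ regular of size $n-d$ and $\sigma(\widehat{A}_{22})=\{0\}$; since this block triangular matrix has rank $(n-d)+\rank(\widehat{A}_{22})$ and rank is a similarity invariant, $\widehat{A}_{22}=0$ follows, which is \eqref{eq:sing22} with $V=U^{\top}$. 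Your remaining bookkeeping (orthogonality of $U$, block sizes matching $d$, invoking \autoref{thm:sing3}) is fine once \eqref{eq:sing22} has been obtained in this way.
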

\begin{proof}
 Let $T\in\mb{R}^{n\times n}$ be as in \autoref{lem:singreg}. Then we obtain
\[\penc{T\opt{E}{S}{\sigma_1}T^{-1},T\opt{A}{S}{\sigma_1}T^{-1}}=\penc{\bpm \wt{E}_{11}&0\\0&0\epm,\bpm \wt{A}_{11}&0\\0&0\epm}\]
with $\wt{A}_{11}\in\mb{R}^{(n-d)\times (n-d)}$ and $\sigma(\wt{E}_{11},\wt{A}_{11})\subseteq\mb{C}_{<0}\cup\{\infty\}$. In particular, $\wt{A}_{11}$ is regular. The matrices $\opt{A}{S}{\sigma_1}$ and $T\opt{A}{S}{\sigma_1}T^{-1}$ have the same eigenvalues counting multiplicities. By the Schur decomposition there exists a regular (in particular an orthogonal) $U\in\mb{R}^{n\times n}$ such that
\begin{EQ}
 U\opt{A}{S}{\sigma_1}U^{-1}=\bpm \widehat{A}_{11}&\widehat{A}_{12}\\0&\widehat{A}_{22}\epm=UT^{-1}\bpm \wt{A}_{11}&0\\0&0\epm TU^{-1}\,,
\end{EQ}
where $\widehat{A}_{11}\in\mb{R}^{(n-d)\times (n-d)}$ is regular and $\sigma(\widehat{A}_{22})=\left\{0\right\}$. This implies $\rank(\widehat{A}_{11})=\rank(\wt{A}_{11})=\rank(U\opt{A}{S}{\sigma_1}U^{-1})$ and thus $\widehat{A}_{22}=0$.
\end{proof}
In summary, we obtain the following result for \hyperref[p_p1]{(AP$_\infty$)}.
\begin{thm}\label{thm:loomin}
Let $S\in\sys{n,p,m}^0$. Then there exist $S_+\in\sys{n_+,p,m}^+$ and $S_-=(E_-,A_-,B_-,C_-,0)\in\sys{n_-,p,m}^-$ such that $S\sim S_+\oplus S_-$. We set $\sigma_1:=\sqrt{\max\sigma(E_-^{\top}\Obs{S_-} E_-\Cont{S_-})}$, then
\[\infb{\|\Fkt{S}-\Fkt{\widehat{S}}\|_\infty}{\widehat{S}\in\bigcup_{\widehat{n}\in\mb{N}}\sys{\widehat{n},p,m}^+}=\sigma_1\,.\]
We define the matrices 
\begin{EQ}[rlrlr]
\opt{R}{S_-}{\sigma_1}&:=\Obs{S_-}E_-\Cont{S_-}E_-^{\top}-\sigma_1^2I,&~~\opt{E}{S_-}{\sigma_1}&:=E_-^{\top}\opt{R}{S}{\sigma_1},&~~\opt{B}{S_-}{\sigma_1}:=E_-^{\top}\Obs{S_-}B_-,\\\opt{A}{S_-}{\sigma_1}&:=-A_-^{\top}\opt{R}{S}{\sigma_1}-C_-^{\top}\opt{C}{S_-}{\sigma_1},&~~\opt{C}{S_-}{\sigma_1}&:=C_-\Cont{S_-}E_-^{\top}.&
\end{EQ}
If $(\opt{E}{S_-}{\sigma_1},\opt{A}{S_-}{\sigma_1})$ is regular, then $S_+\oplus(\opt{E}{S_-}{\sigma_1},\opt{A}{S_-}{\sigma_1},\opt{B}{S_-}{\sigma_1},\opt{C}{S_-}{\sigma_1},0)$ solves \hyperref[p_p1]{(AP$_\infty$)}.

If $(\opt{E}{S_-}{\sigma_1},\opt{A}{S_-}{\sigma_1})$ is singular, then there exist orthogonal matrices $U,V\in\mb{R}^{n\times n}$ with
\begin{EQ}\label{eq:sing212}
 U\opt{A}{S_-}{\sigma_1}V=\bpm \widehat{A}_{11}&\widehat{A}_{12}\\0&0\epm \text{and regular } \widehat{A}_{11}\,.
\end{EQ}
Every regular $U,V\in\mb{R}^{n\times n}$ satisfying \eqref{eq:sing212}, fulfil
\begin{align*}
 U\opt{E}{S_-}{\sigma_1}V=\bpm \widehat{E}_{11}&\widehat{E}_{12}\\0&0\epm,&&U\opt{B}{S_-}{\sigma_1}=\bpm \widehat{B}_{1}\\0\epm,&&\opt{C}{S_-}{\sigma_1}V=\bpm \widehat{C}_{1}&\widehat{C}_{2}\epm.
\end{align*}
The system $S_+\oplus(\widehat{E}_{11},\widehat{A}_{11},\widehat{B}_{1},\widehat{C}_{1},0)$ solves \hyperref[p_p1]{(AP$_\infty$)}.
\end{thm}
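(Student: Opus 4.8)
The plan is to assemble \autoref{thm:stabinst}, \autoref{lem:hank}, \autoref{thm:enehari}, \autoref{thm:causal3} and \autoref{thm:sing3}; no new $\|\cdot\|_\infty$-estimate is needed. First I would invoke \autoref{thm:stabinst} (which rests on \autoref{thm:decsyl}) to obtain $S_+\in\sys{n_+,p,m}^+$ and $S_-=(E_-,A_-,B_-,C_-,0)\in\sys{n_-,p,m}^-$ with $S\sim S_+\oplus S_-$. The equivalence of $(i)$ and $(ii)$ in \autoref{thm:stabinst} holds for \emph{every} $\gamma\geq0$; letting $\gamma$ run over all nonnegative reals forces the infimum $\infb{\|\Fkt{S}-\Fkt{\widehat S}\|_\infty}{\widehat S\in\bigcup_{\widehat n\in\mb{N}}\sys{\widehat n,p,m}^+}$ to equal $\infb{\|\Fkt{S_-}-\Fkt{\widehat S}\|_\infty}{\widehat S\in\bigcup_{\widehat n\in\mb{N}}\sys{\widehat n,p,m}^+}$, and the last sentence of \autoref{thm:stabinst} shows that an optimal $RH_\infty$-approximation $\wt S$ of $\Fkt{S_-}$ yields the optimal $RH_\infty$-approximation $\wt S\oplus S_+$ of $\Fkt{S}$. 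Hence the whole statement reduces to solving \hyperref[p_p1]{(AP$_\infty$)} for the antistable descriptor system $S_-$, and the matrices $\opt{R}{S_-}{\sigma_1},\opt{E}{S_-}{\sigma_1},\opt{A}{S_-}{\sigma_1},\opt{B}{S_-}{\sigma_1},\opt{C}{S_-}{\sigma_1}$ of the statement are precisely those of \autoref{thm:causal3} evaluated at $S_-$.

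Next I would pin down the scalar $\sigma_1$. Since $E_-$ is regular, $\rser{E_-^{-1}}{S_-}$ is a standard realization of $\Fkt{S_-}$ with state spectrum $\sigma(E_-,A_-)\subseteq\mb{C}_{>0}$, hence lies in $\sys{n_-,p,m}^-$; Kalman-decomposing it (\autoref{thm:uetrafmin2}) yields a minimal realization $S_{\min}=(I,A_{\min},B_{\min},C_{\min},0)$ of $\Fkt{S_-}$ whose state eigenvalues are among $\sigma(E_-,A_-)$, so $S_{\min}\in\sys{\cdot,p,m}^-$. By \autoref{lem:hank}, $\sigma(\Cont{S_{\min}}\Obs{S_{\min}})\backslash\{0\}=\sigma(\Cont{S_-}E_-^{\top}\Obs{S_-}E_-)\backslash\{0\}$, and, using $\sigma(XY)\backslash\{0\}=\sigma(YX)\backslash\{0\}$, this equals $\sigma(E_-^{\top}\Obs{S_-}E_-\Cont{S_-})\backslash\{0\}$. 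All three matrices are products of two positive semidefinite matrices (the negatives of the Gramians of an antistable system are positive semidefinite by \autoref{thm:lyap}), hence have nonnegative real spectra, so their maxima coincide; therefore $\sigma_1=\sqrt{\max\sigma(\Cont{S_{\min}}\Obs{S_{\min}})}$, the quantity denoted $\sigma_1$ in \autoref{thm:enehari} for $S_{\min}$. Taking a balanced realization $S_b\sim S_{\min}$ as in \autoref{thm:bal} with $h=\sigma_1$ and applying \autoref{thm:enehari} to $S_b$ gives $\infb{\|\Fkt{S_-}-\Fkt{\widehat S}\|_\infty}{\widehat S\in\bigcup_{\widehat n\in\mb{N}}\sys{\widehat n,p,m}^+}=\sigma_1$, which together with the first paragraph is the first displayed equality of the theorem.

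For the regular case I would apply \autoref{thm:causal3} to $S_-$ with $\gamma=\sigma_1$. By hypothesis $\penc{\opt{E}{S_-}{\sigma_1},\opt{A}{S_-}{\sigma_1}}$ is regular, so the theorem gives $\opt{\mc{S}}{S_-}{\sigma_1}=(\opt{E}{S_-}{\sigma_1},\opt{A}{S_-}{\sigma_1},\opt{B}{S_-}{\sigma_1},\opt{C}{S_-}{\sigma_1},0)\in\sys{n_-,p,m}^+$ and $\sigma_1\leq\|\Fkt{S_-}-\Fkt{\opt{\mc{S}}{S_-}{\sigma_1}}\|_\infty\leq\sigma_1$, hence equality; by the previous paragraph $\opt{\mc{S}}{S_-}{\sigma_1}$ solves \hyperref[p_p1]{(AP$_\infty$)} for $\Fkt{S_-}$, and by the first paragraph $S_+\oplus\opt{\mc{S}}{S_-}{\sigma_1}$ solves \hyperref[p_p1]{(AP$_\infty$)} for $S$. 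For the singular case I would quote \autoref{thm:sing3} with $S=S_-$: it supplies orthogonal $U,V$ satisfying \eqref{eq:sing212}, it establishes that every regular $U,V$ satisfying \eqref{eq:sing212} produces the asserted block forms of $\opt{E}{S_-}{\sigma_1}$, $\opt{B}{S_-}{\sigma_1}$, $\opt{C}{S_-}{\sigma_1}$, and it states that $S_2=(\widehat E_{11},\widehat A_{11},\widehat B_1,\widehat C_1,0)$ solves \hyperref[p_p1]{(AP$_\infty$)} for $\Fkt{S_-}$ (the feedthrough term being $0$ because that of $S_-$ is $0$); by the first paragraph $S_+\oplus S_2$ then solves \hyperref[p_p1]{(AP$_\infty$)} for $S$. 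The only step that is more than bookkeeping is the identification of $\sigma_1$ in the second paragraph — reconciling the descriptor-Gramian formula of the statement with the Hankel singular value of a minimal balanced standard realization via \autoref{lem:hank} and the product-spectrum identity; everything else is a direct assembly of already-proved results.
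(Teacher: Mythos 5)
Your proposal is correct and follows exactly the route the paper intends: \autoref{thm:loomin} is stated there as a summary with no separate proof, being precisely the assembly of \autoref{thm:stabinst}, \autoref{lem:hank}, \autoref{thm:enehari}, \autoref{thm:causal3} and \autoref{thm:sing3} that you carry out. Your second paragraph is in fact slightly more careful than the paper itself, since \autoref{lem:hank} is stated for $\sigma\bigl(\Cont{\wt{S}}\wt{E}^{\top}\Obs{\wt{S}}\wt{E}\bigr)$ while the theorem uses $\sigma\bigl(E_-^{\top}\Obs{S_-}E_-\Cont{S_-}\bigr)$, and your use of $\sigma(XY)\backslash\{0\}=\sigma(YX)\backslash\{0\}$ together with the nonnegativity of these spectra is exactly the bridge needed to reconcile the two.
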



\section{Application}\label{sec:num}

\subsection*{Algorithm}

We rewrite our results in an algorithmic manner. Note that the solution of \hyperref[p_p1]{(AP$_2$)} is necessary to compute the solution of \hyperref[p_p1]{(AP$_\infty$)} and thus no extra computation is needed.

\textbf{Input:} $S\in\sys{n,p,m}^0$

\textbf{Output:} $S_q\in\sys{n_q,p,m}^+$ which solves \hyperref[p_p1]{(AP$_q$)} for $q\in\{2,\infty\}$. 

\begin{alg}
\begin{enumerate}
	\item Decompose $S$ into $S_-\in\sys{n_-,p,m}^-$ and $S_+\in\sys{n_+,p,m}^+$ as in \autoref{thm:stabinst} with the help of \autoref{thm:decsyl}.
	\item The system $S_2:=S_+$ solves \hyperref[p_p1]{(AP$_2$)}.
	\item Determine $\wt{\mathfrak{C}}:=\Cont{S_-}E_-^{\top}$, $\wt{\mathfrak{O}}:=E_-^{\top}\Obs{S_-}$ and $\sigma_1=\sqrt{\max\sigma(\wt{\mathfrak{O}}^{\top}\wt{\mathfrak{C}})}$.
\item \(
\begin{aligned}[t]
 \text{Compute }R&:=\wt{\mathfrak{O}}^{\top}\wt{\mathfrak{C}}-\sigma_1^2I\,,&~&\widehat{E}:=E_-^{\top}R\,,&~&\widehat{B}:=\wt{\mathfrak{O}}B_-\,,\\
&&~&\widehat{A}:=-A_-^{\top}R-C_-^{\top}\widehat{C}\,,&~&\widehat{C}:=C_-\wt{\mathfrak{C}}\,.
\end{aligned}\)
	\item If $(\widehat{E},\widehat{A})$ is regular (test with \autoref{c_singreg}), then \hyperref[p_p1]{(AP$_\infty$)} is solved by $S_\infty:=S_+\oplus(\widehat{E},\widehat{A},\widehat{B},\widehat{C},0)$.
	\item If $(\widehat{E},\widehat{A})$ is singular, then determine $U,V\in\mb{R}^{n\times n}$ (SVD) such that
\begin{EQ}
 U\widehat{A}V=\bpm \widehat{A}_{11}&\widehat{A}_{12}\\0&0\epm\text{ with regular }\widehat{A}_{11}.
\end{EQ}
Compute $\widehat{E}_{11},\widehat{B}_{1},\widehat{C}_{1}$ of
$U\widehat{E}V=\bpm \widehat{E}_{11}&\widehat{E}_{12}\\0&0\epm,~U\widehat{B}=\bpm \widehat{B}_{1}\\0\epm,~\widehat{C}V=\bpm \widehat{C}_{1}&\widehat{C}_{2}\epm,$
then \hyperref[p_p1]{(AP$_\infty$)} is solved by $S_\infty:=S_+\oplus(\widehat{E}_{11},\widehat{A}_{11},\widehat{B}_{1},\widehat{C}_{1},0)$.
\end{enumerate}
\end{alg}

For the numerical issues of step 1 we refer to \cite{Kaagstroem1992} and of step 3 to \cite{Penzl1998}.

Both approximation techniques does not increase the order of the system, i.\,e. $n_q\leq n$. For optimal $RH_2$-approximation we even have $n_2<n$. Another advantage is the less computational complexity compared to $RH_\infty$-approximation. The behaviour of the transfer functions $\Fkt{S}$ and $\Fkt{S_2}$ at infinity is the same, i.\,e. $\Fkt{S}(\infty)=\Fkt{S_2}(\infty)$. This does not hold for optimal $RH_\infty$-approximation in general. However, the suboptimal approximation $S_+\oplus\opt{\mc{S}}{S_-}{\gamma}$ (choosing $\gamma>\sigma_1$ in \autoref{thm:causal2}) has this property. Since $\opt{E}{S_-}{\gamma}$ is regular, we have
\[\Fkt{S}(\infty)=\Fkt{S_+}(\infty)=\Fkt{S_+}(\infty)+\Fkt{\opt{\mc{S}}{S_-}{\gamma}}(\infty)=\Fkt{S_+\oplus\opt{\mc{S}}{S_-}{\gamma}}(\infty)\,.\]

\subsection*{Examples}

The following benchmarks are performed in \matlab. We test the building model (\autoref{fig:build}) and the clamped beam model (\autoref{fig:beam}), see \cite{Chahlaoui2002} for details. Both models are described by stable standard systems with one input and one output, i.e.\ $m=p=1$. We apply (shifted) Arnoldi method to reduce the systems. The parameters (interpolation point $s\in\mb{C}\cup\{\infty\}$ and reduced order $k\in\mb{N}$) are based on the examples in \cite{Antoulas2001}. The computed reduced systems are unstable as listed in \autoref{tab:build}. Thus we apply the algorithm presented above. Since the transfer functions of the original systems (and hence of the reduced systems) vanish at infinity an optimal $RH_\infty$-approximation would result in huge relative errors at high frequencies. That is why we compute suboptimal approximations (i.\,e.\ $\gamma>\sigma_1$) to match the original transfer functions at infinity.

\begin{table}[ht]
\centering
\begin{tabular}{|l|c|l|c|c|c|}
\hline model 	& order	& model reduction& reduced&  unstable 	&max. real part\\
		&  	& method &  order &  poles 	&of unstable poles\\
\hline 	building& 48  	& Arnoldi, $s=\infty$ &  31 & 2 &$\approx 42.4$ \\
		&	& $RH_2$  &  29 & -& - \\
		& 	& $RH_\infty$, $\gamma=1.001\cdot\sigma_1$ &  31 &- &- \\
\hline 	clamped & 348 	& Arnoldi, $s=0.1$ & 13 & 1& $\approx 1.5$ \\
       	beam	&	& $RH_2$  & 12 &- &- \\
		& 	& $RH_\infty$, $\gamma=1.001\cdot\sigma_1$  &  13 &- &-  \\
\hline
\end{tabular}
\caption{Summary of the results}
\label{tab:build}
\end{table}

The error between the original systems and the reduced systems of Arnoldi method does not significantly chance after applying our stabilization algorithm as depicted in \autoref{fig:build} and \autoref{fig:beam}. 


\begin{figure}[ht]
 	\centering
	\matpdf{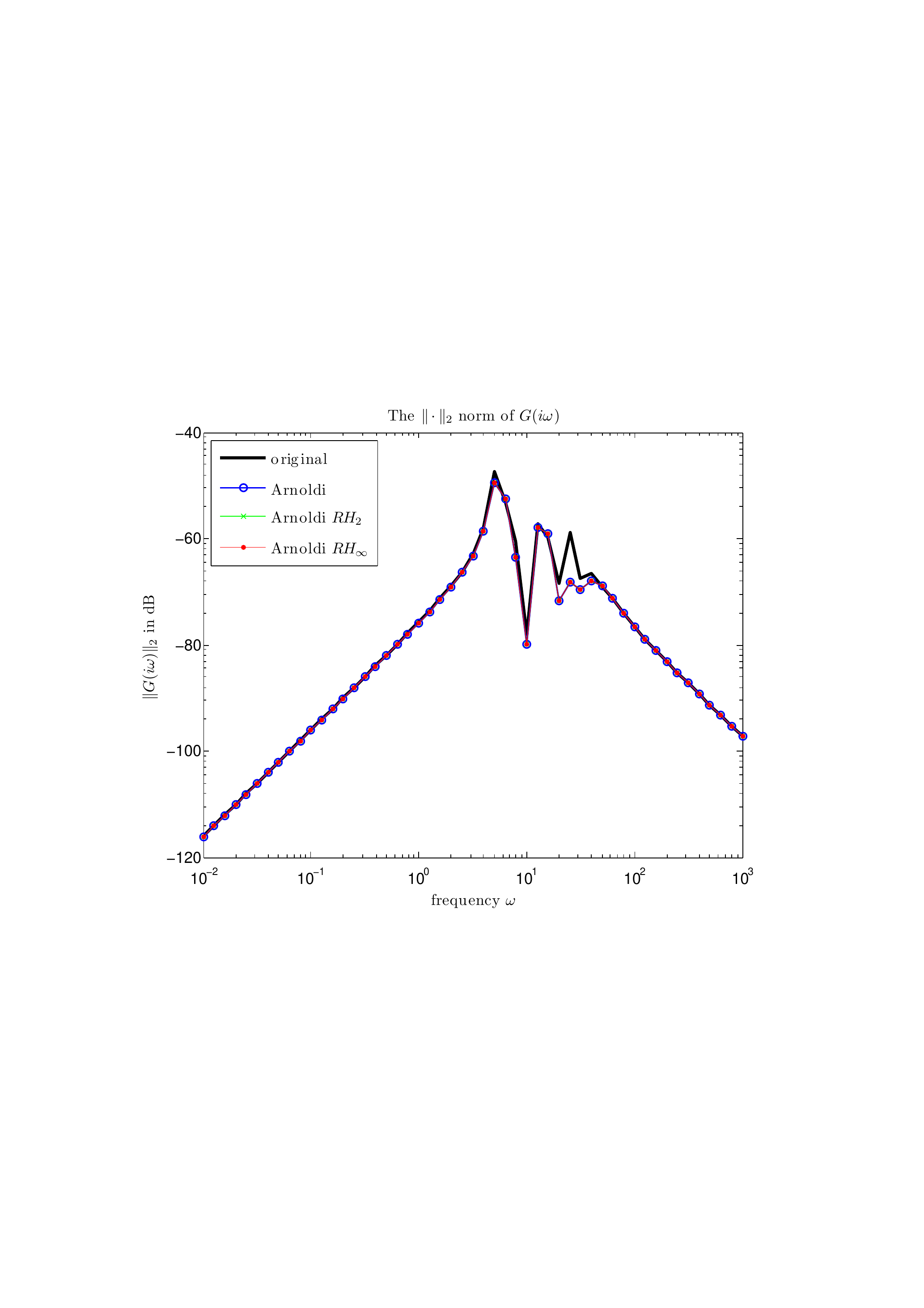}\quad\matpdf{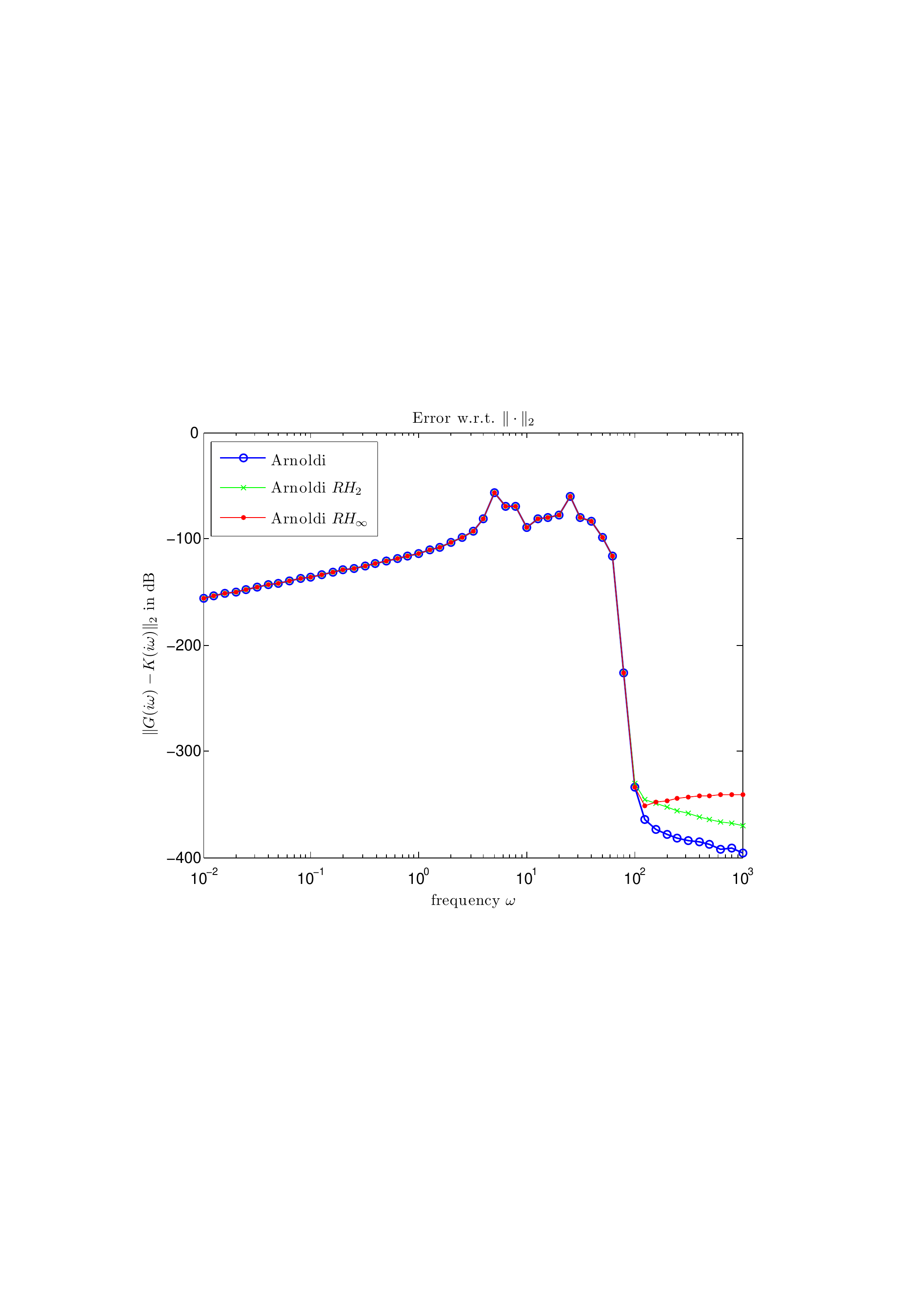}
	\caption{Frequency responses of the (reduced) building model (left) and the error systems (right)}
	\label{fig:build}
\end{figure}


\begin{figure}[ht]
 	\centering
	\matpdf{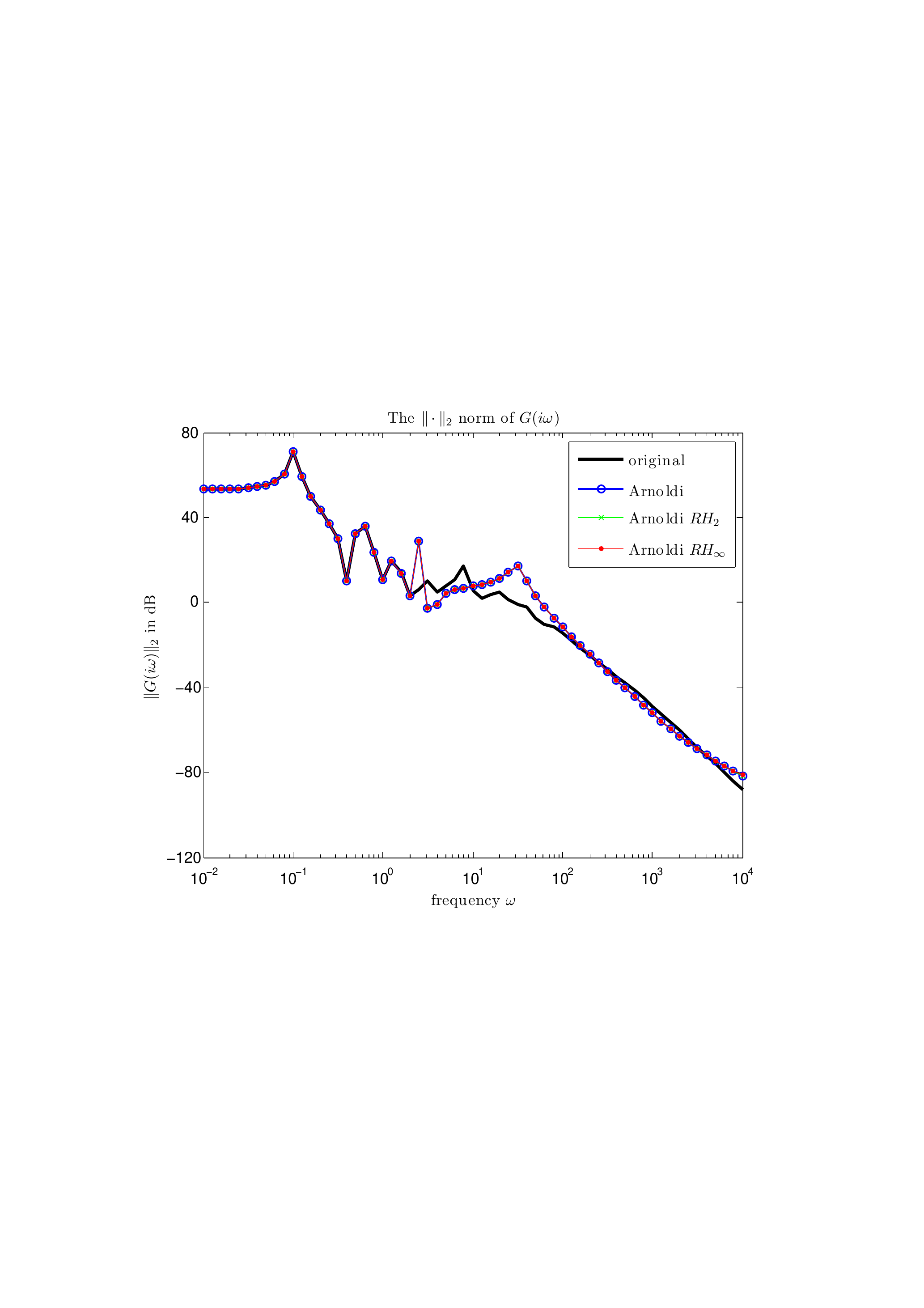}\quad\matpdf{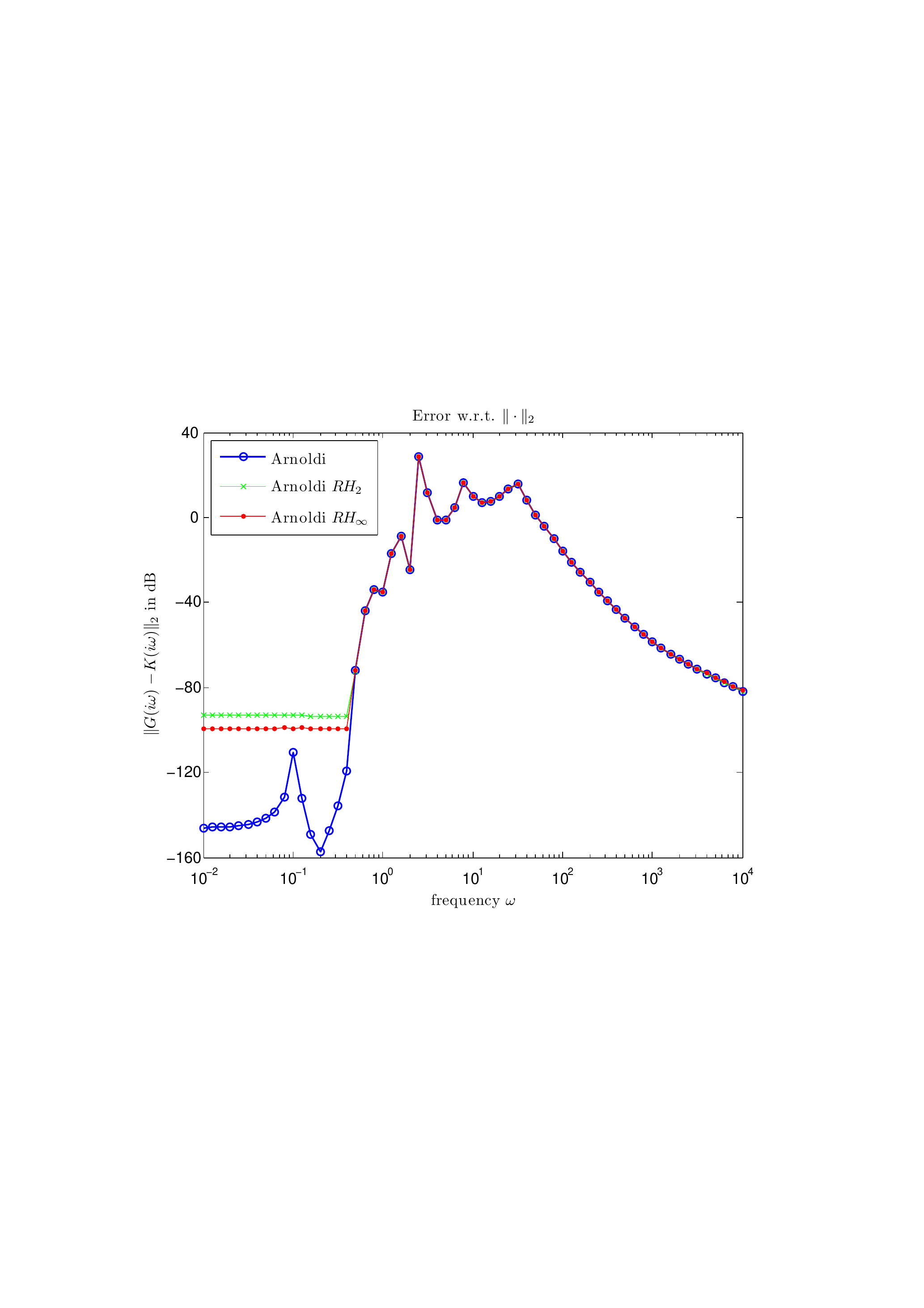}
	\caption{Frequency responses of the (reduced) clamped beam model (left) and the error systems (right)}
	\label{fig:beam}
\end{figure}




\phantomsection
	\addcontentsline{toc}{section}{Literature}
\bibliography{Literatur}

\end{document}